\g@addto@macro\bfseries{\boldmath}
\newtheorem{theorem}{Theorem}[section]
\newtheorem{thm}{Theorem}
\newtheorem{lemma}[theorem]{Lemma}
\newtheorem{proposition}[theorem]{Proposition}
\theoremstyle{definition}
\newcommand{\supp}[1]{\mathrm{supp}({#1})} 
\renewcommand{\dim}{\mathrm{dim}} 
\newcommand{\Sym}[1]{\mathrm{Sym}({#1})}
\newcommand{\Sn}[1]{\mathrm{S}_{#1}}
\newcommand{\An}[1]{\mathrm{A}_{#1}}
\newcommand{\PSL}{\mathrm{PSL}} 
\newcommand{\Aut}{\mathrm{Aut}}
\newcommand{\diag}{\mathrm{diag}}
\newcommand{\PGamL}{\mathrm{P} \Gamma \mathrm{L}} 
\newcommand{\GamL}{\Gamma \mathrm{L}} 
\newcommand{\SigL}{\Sigma \mathrm{L}} 
\newcommand{\PSigL}{\mathrm{P} \Sigma \mathrm{L}}
\newcommand{\GL}{\mathrm{GL}} 
\newcommand{\PGL}{\mathrm{PGL}} 
\newcommand{\SL}{\mathrm{SL}}
\newcommand{\PG}{\mathcal{PG}} 
\newcommand{\F}{\mathbb{F}}  %
\newcommand{\Syl}[1]{\mathrm{Syl}} 
\newcommand{\id}{I} 
\newcommand{\I}{\mathrm{I}}
\newcommand{\RC}{\mathrm{RC}}
\newcommand{\Soc}{\mathrm{Soc}}
\renewcommand{\H}{\mathrm{H}}
\newcommand{\A}{A}%
\newcommand{\B}{B} %
\newcommand{\BB}{B} %
\renewcommand{\I}{\mathrm{I}}
\newcommand{\X}{X} %
\newcommand{\x}{x}
\newcommand{\Y}{Y}%
\newcommand{\y}{y}
\newcommand{\M}{\mathbb{M}}
\newcommand{\GGL}{G} %
\newcommand{\gp}{H} %
\newcommand{\ogp}{\overline{H}} %
\newcommand{\oD}{\overline{\Delta}} %
\renewcommand{\th}{\text{-th}}
\newcommand{\s}[2]{{\underset{#2,#1}{\sim}}}
\begin{document}
\title[The relational complexity of linear groups]{The relational complexity of linear groups\\acting on subspaces}
\author{Saul D. Freedman}
\address{\parbox{\linewidth}{
SAUL D. FREEDMAN, School of Mathematics and Statistics, University of St Andrews,\\
St Andrews, KY16 9SS, UK\\
\textit{Current address}: Centre for the Mathematics of Symmetry and Computation,\\The University of Western Australia, Crawley, WA 6009, Australia}\vspace{.2cm}}
\email{saul.freedman@uwa.edu.au}
\author{Veronica Kelsey}
\address{\parbox{\linewidth}{
VERONICA KELSEY, Department of Mathematics, University of Manchester,\\
Manchester, M13 9PL, UK}\vspace{.2cm}}
\email{veronica.kelsey@manchester.ac.uk}
\author{Colva M. Roney-Dougal}
\address{\parbox{\linewidth}{
COLVA M. RONEY-DOUGAL, School of Mathematics and Statistics, University of St Andrews,\\
St Andrews, KY16 9SS, UK}\vspace{.2cm}}
\email{colva.roney-dougal@st-andrews.ac.uk}

\subjclass[2020]{20B15, 20G40, 03C13}
\keywords{Relational complexity, linear groups, subspace actions}

\maketitle

\begin{abstract} The relational complexity of a subgroup $G$ of
  $\Sym{\Omega}$ is a measure of the way in which the orbits of $G$
  on $\Omega^k$ for various $k$ determine the original action of $G$.
  Very few precise values of relational complexity are
  known. This paper determines the exact relational complexity of all
  groups lying between $\PSL_{n}(\F)$ and $\PGL_{n}(\F)$, for an
  arbitrary field $\F$,
  acting on the set of $1$-dimensional subspaces of $\F^n$. We also bound the relational complexity of all groups lying
  between $\PSL_{n}(q)$ and $\PGamL_{n}(q)$, and
 generalise these results to the action on
  $m$-spaces %
  for $m \ge 1$.
\end{abstract}

\section{Introduction}
\label{sec:intro}

The study of relational complexity began with work of Lachlan
in model theory as a way of studying \emph{homogeneous} relational
structures: those in which every isomorphism between induced
substructures extends to an automorphism of the whole structure. 
For the original definition see, for
example, \cite{Lach};
an equivalent definition in terms of permutation groups was given by
Cherlin \cite{CherlinGelfand}, and, apart from a slight
generalisation to group actions, is the one we now present.

Let $\Omega$ be an arbitrary set and let $\gp$ be a group acting on 
$\Omega$. Fix $k \in \mathbb{Z}$, and let $\X := (\x_1,\ldots,\x_k),\Y := (\y_1,\ldots,\y_k) \in
\Omega^k$.  For $r \le k$, we say that $\X$ and $\Y$ are
\emph{$r$-equivalent} under $\gp$, denoted $\X \s{r}{\gp} \Y$, if for
every $r$-subset of  indices $\{i_1, \ldots , i_r\} \subseteq \{1,\ldots,
k\}$, there exists an $h \in \gp$ such that $(\x_{i_1}^h,\ldots,
\x_{i_r}^h)=(\y_{i_1}, \ldots, \y_{i_r})$. If $\X \s{k}{\gp} \Y$,
i.e.~if $\Y \in \X^\gp$, then $\X$ and $\Y$ are \emph{equivalent}
under $\gp$. The \emph{relational complexity} of $\gp$, denoted
$\RC(\gp,\Omega)$, or $\RC(\gp)$ when $\Omega$ is clear, is the
smallest $r \ge 1$ such that $\X \s{r}{\gp} \Y$ implies $\Y \in \X^\gp$, for
all $\X,\Y \in \Omega^k$ and all $k \ge r$. Equivalently, $\RC(\gp)$
is the smallest $r$ such that $r$-equivalence of tuples implies
equivalence of tuples. Note that $\RC(\gp) \ge 2$ if $\gp \ne 1$ and $|\Omega| > 1$, as $\X$ or $\Y$ may contain repeated entries.

Calculating the precise relational complexity of a group is often very
difficult. A major obstacle  is that if $K < H \le \Sym{\Omega}$, 
then there is no uniform relationship between $\RC(K, \Omega)$ and $\RC(\gp, \Omega)$. For example, if $n \geq 4$, then the
relational complexities of the regular action of $C_n$ and natural actions of $\An{n}$ and $\Sn{n}$ are $2$, $n-1$ and $2$, respectively. 
In \cite{CherlinGelfand}, Cherlin gave three families of finite primitive
binary groups (groups with relational complexity two) and conjectured that this list was complete. In a dramatic recent breakthrough, this conjecture was proved by Gill, Liebeck and Spiga in \cite{binary}; this monograph also contains an extensive literature review. 

In \cite{CherlinGelfand, Cherlin16}, Cherlin determined the
  exact relational complexity of $\Sn{n}$ and $\An{n}$ in their
  actions on $k$-subsets of $\{1, \ldots, n\}$. The relational complexity of the remaining large-base primitive
groups is considered in \cite{CherlinMartinSaracino}.
Looking at finite primitive groups more generally, Gill,
Lodà and Spiga proved in \cite{GLS} that if $\gp \le \Sym{\Omega}$ is
primitive and not large-base, then $\RC(\gp, \Omega) < 9 \log |\Omega|
+ 1$ (our logarithms are to the base $2$). This bound was tightened by the second and third author in
\cite{KRD} to $5 \log |\Omega| +1$. Both \cite{GLS} and \cite{KRD}
bounded the relational complexity via base size, and the groups with
the largest upper bounds are classical groups acting %
on
subspaces of the natural module, and related product action
groups. This motivated us to obtain further information about the
relational complexity of these groups; this paper confirms that
  these bounds are tight, up to constants.

We now fix some notation for use throughout this paper. Let
$n$ be a positive integer, $\F$ be a (not necessarily finite) field, $V=\F^{n}$, and $\Omega_m = \PG_m(V)$ be the set of $m$-dimensional subspaces of $V$. We shall
study the relational complexity of the almost simple groups $\ogp$ with
${\PSL_n(\F) \trianglelefteq \ogp \leq \PGamL_n(\F)}$, acting on $\Omega_m$. We will generally work with the corresponding groups $H$ with ${\SL_n(\F) \trianglelefteq H \leq \GamL_n(\F)}$, as these naturally have the same relational complexity when acting on $\Omega_m$. 

Several of our results focus on %
the case $\F = \F_q$. %
 We begin with the following theorem of	 Cherlin.
\begin{theorem}[{\cite[Example
    3]{CherlinGelfand}}]\label{thm:CherlinNonzero} The relational
  complexity of $\GL_{n}(q)$ acting on the nonzero vectors of
  $\mathbb{F}_q^n$ is equal to $n$ when $q=2$, and $n+1$ when $q \geq
  3$. Hence also in the action on $1$-spaces we find that $\RC(\PGL_n(2), \Omega_1) = n$.
\end{theorem}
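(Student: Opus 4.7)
I would establish matching upper and lower bounds on $\RC(\GL_n(q), V \setminus \{0\})$ in parallel for $q \geq 3$ and $q = 2$, then deduce the corollary for $\PGL_n(2)$ acting on $\Omega_1$. Write $e_1, \ldots, e_n$ for the standard basis of $V$. For the upper bound, suppose $X = (x_1, \ldots, x_k)$ and $Y = (y_1, \ldots, y_k)$ are $(n+1)$-equivalent tuples of nonzero vectors. Let $d = \dim \langle X \rangle$ and choose $I = \{i_1, \ldots, i_d\}$ so that $(x_{i_l})$ is a basis of $\langle X \rangle$. The $d$-equivalence on $I$ produces $g_0 \in \GL_n(q)$ with $g_0(x_{i_l}) = y_{i_l}$. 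Writing $x_j = \sum_l \lambda_{jl} x_{i_l}$, the $(d+1)$-equivalence on $I \cup \{j\}$ yields an element carrying $x_{i_l} \mapsto y_{i_l}$ and $x_j \mapsto y_j$, so by linearity $y_j = \sum_l \lambda_{jl} y_{i_l} = g_0(x_j)$; hence $g_0$ sends $X$ to $Y$. Since $d+1 \leq n+1$ this gives $\RC \leq n+1$. For $q = 2$ the argument already uses only $n$-equivalence when $d < n$. In the remaining case $d = n$, set $S_j = \{l : \lambda_{jl} = 1\}$: if $|S_j| < n$, any $l_0 \notin S_j$ makes $(I \setminus \{i_{l_0}\}) \cup \{j\}$ an $n$-subset over which the linear relation $x_j = \sum_{l \in S_j} x_{i_l}$ transfers under $n$-equivalence to give $y_j = \sum_{l \in S_j} y_{i_l}$; if $|S_j| = n$, then for every $l_0$ the $n$-subset $(I \setminus \{i_{l_0}\}) \cup \{j\}$ carries a basis, and $n$-equivalence forces $(y_{i_l})_{l \neq l_0} \cup (y_j)$ to be a basis too, which means $y_j$ has nonzero (hence unit) $l_0$-coefficient in the expansion against $(y_{i_l})$; varying $l_0$ yields $y_j = \sum_l y_{i_l}$.

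For the lower bound with $q \geq 3$, fix $\alpha \in \F_q \setminus \{0, 1\}$ and set
\[
X = (e_1, \ldots, e_n, e_1 + \cdots + e_n), \qquad Y = (e_1, \ldots, e_n, e_1 + \cdots + e_{n-1} + \alpha e_n).
\]
Each $n$-subset of $\{1, \ldots, n+1\}$ omits one index: omitting $n+1$ is handled by the identity; omitting $j \leq n-1$ by the transvection $e_j \mapsto e_j + (\alpha-1)e_n$ fixing the other basis vectors; omitting $n$ by the diagonal map $e_n \mapsto \alpha e_n$. But any element fixing every $e_i$ is the identity, so $X, Y$ are inequivalent and $\RC \geq n+1$. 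When $q = 2$ the scalar $\alpha$ is unavailable, so instead take
\[
X = (e_1, \ldots, e_n, e_1 + \cdots + e_{n-1}), \qquad Y = (e_1, \ldots, e_n, e_1 + \cdots + e_{n-2} + e_n).
\]
Every $(n-1)$-subset either omits index $n+1$ (handled by the identity) or misses two basis indices $\{j_1, j_2\}$, in which case the pointwise stabiliser of $\langle e_i : i \notin \{j_1, j_2\}\rangle$ in $\GL_n(2)$ acts on the quotient as $\GL_2(2)$ and is transitive on nonzero classes, while both final entries (of Hamming weight $n-1$) represent nonzero classes in each such quotient. The final entries differ, so $X, Y$ are inequivalent and $\RC \geq n$. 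Finally, since $\PGL_n(2) = \GL_n(2)$ (scalars over $\F_2$ are trivial) and each $1$-space of $\F_2^n$ contains a unique nonzero vector, the two actions are isomorphic, giving the corollary $\RC(\PGL_n(2), \Omega_1) = n$.

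The main obstacle is the tightening from $n+1$ to $n$ when $q = 2$: the $(d+1)$-equivalence argument breaks down exactly at $d = n$, and one must exploit the $\F_2$-specific facts that coefficients are $0$ or $1$ and that any $n$-element spanning set of $V$ is already a basis, in order to reduce back to $n$-sized subsets via the two sub-case analysis above. The lower-bound constructions also require some care to check that the candidate partial maps extend to invertible elements of $\GL_n(q)$.
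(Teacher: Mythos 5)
Your proof is essentially correct, but there is nothing in the paper to compare it against: Theorem~\ref{thm:CherlinNonzero} is stated as a quotation of \cite[Example 3]{CherlinGelfand} and the paper supplies no proof of it. Judged on its own terms, your argument is a clean, self-contained derivation. The upper bound — fix a basis subtuple indexed by $I$, obtain $g_0$ from $d$-equivalence on $I$, and transport each relation $x_j=\sum_l\lambda_{jl}x_{i_l}$ through $(d{+}1)$-equivalence on $I\cup\{j\}$ — is the standard mechanism and is sound, and your $\F_2$-refinement (splitting on whether $S_j$ is proper, and in the full-support case extracting each coefficient of $y_j$ from the fact that the image of an $n$-element basis under an invertible map is a basis) correctly accounts for the drop from $n+1$ to $n$. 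The lower-bound witnesses also work: the verifying elements you list are invertible and do what is claimed, and the pointwise stabiliser of $e_1,\dots,e_n$ being trivial gives inequivalence. One step worth tightening: in the $q=2$ lower bound, transitivity of the pointwise stabiliser $P$ of $\{e_i : i\notin\{j_1,j_2\}\}$ on the nonzero classes of $V/U_0$ (where $U_0=\langle e_i : i\notin\{j_1,j_2\}\rangle$), together with $u,v\notin U_0$, only yields $g\in P$ with $g(u)\equiv v\pmod{U_0}$; to get $g(u)=v$ exactly you should also observe that the kernel of the action of $P$ on $V/U_0$ (the maps $e_{j_s}\mapsto e_{j_s}+u_s$ with $u_s\in U_0$) carries $u$ to every vector in its coset, so that $P$ is in fact transitive on all of $V\setminus U_0$. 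This is a one-line addition, not a genuine gap.
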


More generally, for $\PSL_n(q) \trianglelefteq
\ogp \le \PGL_n(q)$, Lodà \cite[Corollary 5.2.7]{Loda} shows that
$\RC(\ogp,\Omega_1) < 2 \log |\Omega_1| + 1$.  Other results imply
an alternative upper bound on $\RC(\ogp,\Omega_1)$. 
We first note that the \emph{height} of a permutation group $K$ on a set $\Omega$, denoted $\H(K)$ or $\H(K,\Omega)$, is the maximum size of a subset $\Delta$ of $\Omega$ with the property that $K_{(\Gamma)} < K_{(\Delta)}$ for each $\Gamma \subsetneq \Delta$. It is easy to show (see \cite[Lemma 2.1]{GLS}) that $\RC(K) \le \H(K) + 1$. By combining this with immediate generalisations of results of Hudson \cite[\S5.3-5.4]{Hudson} and Lod\`a \cite[Prop 5.2.1]{Loda}, we obtain the following (for $|\F| = 2$, see Theorem~\ref{thm:CherlinNonzero}; we also omit a few small exceptional cases for brevity). 

\begin{proposition}%
 \label{prop:lodabounds} Let $\PSL_n(\F) \trianglelefteq \ogp \leq \PGL_n(\F)$ and $|\F| \geq 3$.
  \begin{enumerate}[label=\rm{(\roman*)}]
    \item \label{proppart:scott} Suppose that $n=2$, with $|\F|
      = q \ge 7$ if $\ogp \ne \PGL_2(\F)$.
      If $|\F| \ge 4$, then
      $\H(\ogp,\Omega_1)=3$ and $\RC(\ogp,\Omega_1) = n+2 = 4$, whilst
      $\RC(\PGL_2(3), \Omega_1) = n = 2$.
  \item \label{proppart:bianca} If $n\geq 3$, then $\H(\ogp,\Omega_1)=2n-2$ and $\RC(\ogp,\Omega_1) \le 2n-1$.
  \end{enumerate}
\end{proposition}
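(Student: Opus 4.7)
The strategy is to combine the inequality $\RC(K) \leq \H(K) + 1$ cited just above with height computations that adapt those of Hudson~\cite[\S5.3-5.4]{Hudson} (for $n \geq 3$) and Lod\`{a}~\cite[Prop.~5.2.1]{Loda} (for $n = 2$) to an arbitrary field $\F$ with $|\F| \geq 3$. The key observation enabling this extension is that the conditions defining a strict stabilizer chain for $\bar H$ acting on $\Omega_1$ are expressed by finitely many linear-algebraic constraints in $V = \F^n$, so essentially the same witnesses and inductive arguments work uniformly in $|\F|$ once $|\F| \geq 3$.

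For part (ii) with $n \geq 3$, the first step is to exhibit a subset $\Delta \subseteq \Omega_1$ of size $2n-2$ on which the pointwise stabilizer chain in $\bar H$ descends strictly: for example, $\{\langle e_1 \rangle, \langle e_2 \rangle, \langle e_3 \rangle, \langle e_1+e_2 \rangle\}$ works when $n = 3$, and for larger $n$ one appends further projective points that each impose a new equality among the diagonal-torus entries without becoming redundant. This gives $\H(\bar H, \Omega_1) \geq 2n-2$. The matching upper bound is the substantive step: for any sequence $\langle v_1 \rangle, \ldots, \langle v_k \rangle$ whose pointwise stabilizers in $\bar H$ strictly descend, one tracks the dimensions of spans and of the invariant subspaces cut out at each step to conclude $k \leq 2n-2$. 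Together with $\RC \leq \H + 1$, this yields $\RC(\bar H, \Omega_1) \leq 2n-1$.

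For part (i) with $n = 2$ and $|\F| \geq 4$, the action of $\PGL_2(\F)$ on $\Omega_1$ is sharply $3$-transitive, so the pointwise stabilizer of any $3$ distinct $1$-spaces is trivial while that of any $2$ of them is isomorphic to $\F^*$, which is nontrivial for $|\F| \geq 3$. Hence $\H(\PGL_2(\F),\Omega_1) = 3$ and $\RC(\PGL_2(\F),\Omega_1) \leq 4$, and the same conclusion transfers to the intermediate groups $\bar H$ under the hypothesis $q \geq 7$, which excludes a handful of small exceptional configurations. The matching lower bound $\RC \geq 4$ is supplied by \cite[Prop.~5.2.1]{Loda}, which constructs explicit pairs of $3$-equivalent but inequivalent $4$-tuples of $1$-spaces; the classical obstruction is that the cross-ratio is a $\PGL_2$-invariant of ordered $4$-tuples that takes more than one value once $|\F| \geq 4$. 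Finally, $\PGL_2(3) \cong \Sn{4}$ acts on $\Omega_1$ (of size $4$) as the symmetric group on $4$ letters, whose orbits on $k$-tuples are determined entirely by the coincidence pattern of coordinates; this action is therefore binary, giving $\RC(\PGL_2(3),\Omega_1) = 2 = n$.

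The main obstacle is the upper bound $\H(\bar H, \Omega_1) \leq 2n-2$ in part (ii): while the lower bound is achieved by an explicit construction, ruling out longer strict stabilizer chains requires delicate tracking of linear relations among the chosen representative vectors, and is the technical heart of Hudson's argument that must be verified to extend without change to an arbitrary field $\F$ with $|\F| \geq 3$.
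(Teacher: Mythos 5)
Your proposal is correct and follows essentially the same route as the paper, which itself gives no self-contained proof here: it simply combines the inequality $\RC(K)\le \H(K)+1$ with (immediate generalisations of) the height computations of Hudson and Lod\`a, exactly as you do, while you additionally flesh out the easy ingredients (explicit irredundant sets giving $\H\ge 2n-2$, sharp $3$-transitivity and the cross-ratio for $n=2$, and $\PGL_2(3)\cong \Sn{4}$ acting binarily on four points). The one substantive step you correctly identify but do not prove --- the upper bound $\H(\ogp,\Omega_1)\le 2n-2$ for $n\ge 3$ --- is likewise deferred to the cited sources in the paper, so your treatment matches the paper's level of detail.
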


Our first theorem gives the exact relational complexity of
groups between $\PSL_n(\F)$ and $\PGL_n(\F)$ for $n \ge 3$, acting naturally on 1-spaces. 

\begin{thm}\label{thm:A} Let $n \geq 3$, and let $\F$ be any field. Then the following hold.
\begin{enumerate}[label=\rm{(\roman*)}]
\item \label{A:PGL} $$\RC(\PGL_n(\F), \Omega_1)=\begin{cases} n & \text{ if }
    |\F| \le 3,\\ n+2 & \text{ if } |\F| \geq 4. \end{cases}$$
\item \label{A:notPGL} If  $\PSL_n(\F) \trianglelefteq \ogp < \PGL_n(\F)$, then 
$$\RC(\ogp, \Omega_1)=\begin{cases} 
2n-1 & \text{ if } n = 3,\\
2n-2 & \text{ if } n \geq 4. \end{cases}$$
\end{enumerate}
\end{thm}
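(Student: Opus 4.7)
Plan.

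\textbf{Part (i), $|\F|\ge 4$.} The key input is that $\PGL_n(\F)$ acts sharply transitively on ordered projective frames — that is, $(n{+}1)$-tuples of $1$-spaces in general position. For the upper bound $\RC(\PGL_n(\F),\Omega_1)\le n+2$, take $X,Y\in\Omega_1^k$ with $X\s{n+2}{\PGL_n(\F)} Y$. After reducing to the case of distinct entries, and inducting on $n$ when the entries of $X$ span a proper subspace, I would locate an $(n{+}1)$-subframe $F$ of $X$; sharp transitivity yields a unique $g\in\PGL_n(\F)$ sending $F$ to its counterpart in $Y$. For each remaining index $j$, the $(n{+}2)$-subtuple consisting of $F$ together with $x_j$ is $\PGL$-equivalent to its counterpart in $Y$, and uniqueness of $g$ forces the matching element on that subtuple to be $g$ itself, so $gx_j=y_j$. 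For the lower bound, pick distinct $\alpha,\beta\in\F\setminus\{0,1\}$ and set
\[
X=(\langle e_1\rangle,\dots,\langle e_n\rangle,\langle e_1+\dots+e_n\rangle,\langle \alpha e_1+e_2+\dots+e_n\rangle),
\]
with $Y$ identical but with $\beta$ replacing $\alpha$. Each $(n{+}1)$-subtuple of $X$ is either equal to the corresponding subtuple of $Y$ or a frame that can be matched explicitly to its $Y$-counterpart by a $\PGL$-element, so $X\s{n+1}{\PGL_n(\F)} Y$; but the first $n{+}1$ entries of $X$ form a frame whose $\PGL$-stabiliser is trivial, so the only global candidate equivalence is the identity, which does not send the last coordinate correctly.

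\textbf{Part (i), $|\F|\le 3$.} For $|\F|=2$ this is Theorem~\ref{thm:CherlinNonzero}. For $|\F|=3$, since $\F_3^*=\{\pm 1\}$ the cross-ratio-type invariants distinguishing the tuples above collapse; one checks that any two $n$-equivalent distinct-entry tuples are $\PGL$-equivalent by locating $n$ linearly independent entries and applying transitivity of $\PGL_n(3)$ on ordered bases of $V$. The matching lower bound $\RC\ge n$ is generic: $X=(\langle e_1\rangle,\dots,\langle e_n\rangle)$ and $Y=(\langle e_1\rangle,\dots,\langle e_{n-1}\rangle,\langle e_1+\dots+e_{n-1}\rangle)$ are $(n{-}1)$-equivalent, because each $(n{-}1)$-subtuple consists of linearly independent $1$-spaces and $\PGL$ is transitive on such ordered configurations, but inequivalent since $\supp{X}$ and $\supp{Y}$ have spans of different dimensions.

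\textbf{Part (ii).} Proposition~\ref{prop:lodabounds} already gives $\RC(\ogp,\Omega_1)\le 2n-1$, which settles the upper bound when $n=3$. For $n\ge 4$ I would sharpen this to $2n-2$: given $X\s{2n-2}{\ogp} Y$, first apply the Part~(i) frame argument to obtain some $g\in\PGL_n(\F)$ with $X^g=Y$; then vary the choice of frame inside a $(2n{-}2)$-subtuple and use the remaining coordinates to extract enough relations on the image of $g$ in $\PGL_n(\F)/\ogp\hookrightarrow\F^*/(\F^*)^n$ to force $g\in\ogp$. For the matching lower bounds I would build on the height-witness configuration $\{\langle e_i\rangle:1\le i\le n\}\cup\{\langle e_1+e_i\rangle:2\le i\le n-1\}$ of size $2n-2$, whose pointwise $\PGL$-stabiliser is trivial. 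Replacing one coordinate by a diagonal twist $\langle \lambda e_1+e_j\rangle$ for $\lambda\in\F^*$ lying outside the correct coset of $\PGL_n(\F)/\ogp$ produces $X,Y$ with $Y\in X^{\PGL_n(\F)}\setminus X^{\ogp}$; each proper subtuple is rescued by a compensating diagonal element of $\ogp$.

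\textbf{Main obstacle.} The delicate point in part (ii) is the interplay between the combinatorial $(r{-}1)$-equivalence condition and the multiplicative coset invariant in $\PGL_n(\F)/\ogp\le \F^*/(\F^*)^n$. The lower-bound construction must simultaneously keep this invariant nontrivial on the full tuple, allow every smaller subtuple to be rescued by an element of $\ogp$, and respect the sharp threshold $2n-1$ when $n=3$ versus $2n-2$ when $n\ge 4$. The threshold shifts because, in dimension $3$, a size-$(2n{-}3)=3$ subtuple has too few coordinates to simultaneously pin down a basis and detect the coset invariant; I therefore expect that the $n=3$ and $n\ge 4$ lower bounds require separate constructions, each explicitly tracking the projective analogue of the $\GL_n/\SL_n$ determinant.
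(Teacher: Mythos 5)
Your central device for the upper bounds --- locate a projective frame (an $(n{+}1)$-subtuple in general position) inside $\X$, use sharp transitivity of $\PGL_n(\F)$ on frames to pin down a unique $g$, and then force every other coordinate to follow --- has a genuine gap: a tuple whose entries are distinct and span $V$ need not contain any $(n{+}1)$-subtuple forming a frame. For example, $\X = (\langle e_1\rangle,\ldots,\langle e_n\rangle,\langle e_1+e_2\rangle,\langle e_3+e_4\rangle,\ldots)$ contains no frame at all: any $(n{+}1)$-subset containing $\langle e_1\rangle,\langle e_2\rangle,\langle e_1+e_2\rangle$ has an $n$-subset spanning a proper subspace. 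These small-support tuples are not a degenerate side case; they are exactly where the relational complexity is concentrated (every lower-bound witness in the paper is of this shape), and they are what the paper spends most of its effort on. After normalising the first $n$ coordinates to $\langle e_1\rangle,\ldots,\langle e_n\rangle$ (Lemma~\ref{lem:tuplefirstsubs-norm}), the problem reduces to the action of the diagonal group $D$ on the remaining coordinates, and the paper's Lemmas~\ref{lem:eqsupps}, \ref{lem:submats}, \ref{lem:diaginequiv} and \ref{lem:subsetinequiv} (for $\PGL_n$) and Lemma~\ref{lem:combined} with the counting argument of Theorem~\ref{thm:PSLupper} (for the subgroups) carry out a delicate analysis of how the supports of the remaining entries overlap. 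Nothing in your plan substitutes for this. The same objection applies to your Part~(ii) upper bound, which additionally begins from ``$(2n{-}2)$-equivalence under $\ogp$ gives some $g\in\PGL_n(\F)$ with $\X^g=\Y$'' and then gestures at a coset invariant in $\F^*/(\F^*)^n$; the actual argument (Theorem~\ref{thm:PSLupper}) never produces such a $g$ and instead derives a contradiction from support combinatorics. Your $|\F|=3$ upper bound (``transitivity on ordered bases'') likewise ignores the coordinates beyond the first $n$, which is where Lemma~\ref{lem:subsetinequivq3} and Proposition~\ref{prop:nequivalent} do real work.

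The lower-bound sketches are closer to the mark: your Part~(i) witness with $\alpha,\beta$ is in the same spirit as Proposition~\ref{prop:PGLlow} (which uses $\langle e_1+\lambda e_2\rangle$ versus $\langle e_1+\lambda^{-1}e_2\rangle$), and your generic $\RC\ge n$ witness is exactly Theorem~\ref{thm:lower_bound_n}. But in Part~(ii) your claim that the configuration $\{\langle e_i\rangle\}\cup\{\langle e_1+e_i\rangle: 2\le i\le n-1\}$ has trivial pointwise $\PGL$-stabiliser is false (the stabiliser contains all classes of $\diag(\lambda,\ldots,\lambda,\mu)$), and the actual witness (Proposition~\ref{prop:PSLlower}) twists \emph{all} of the coordinates $\langle e_1+e_i\rangle$ to $\langle\alpha e_1+e_i\rangle$ simultaneously, with explicit determinant-one elements rescuing each $(2n{-}3)$-subtuple; the $n=3$ case (Lemma~\ref{lem:PSL-n=3}) indeed needs a separate explicit construction, as you anticipate. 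As it stands the proposal does not constitute a proof of either direction of the theorem beyond the easy lower bounds.
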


For most groups, we see that the relational complexity is very close to the bound in Proposition~\ref{prop:lodabounds}\ref{proppart:bianca}. However,
the difference between the height and the relational complexity of $\PGL_n(\F)$ increases with $n$ when $|\F| \ge 3$. 
This addresses a recent question of Cherlin and Wiscons (see
\cite[p.~23]{binary}): there exists a family of finite primitive
groups that are not large-base, where the difference between height
and relational complexity can be arbitrarily large.  Theorem~\ref{thm:A} also provides infinitely many examples of almost simple groups $\ogp$ with $\RC(\Soc(\ogp)) >  \RC(\ogp)$. 

One way to
  interpret the gap between the relational complexity of
  $\PGL_n(\F)$ and its proper almost simple subgroups with socle $\PSL_n(\F)$
  is to observe that preserving linear dependence and indepedence is a
  comparatively ``local'' phenomenon, requiring information about the
 images of $n$-tuples of subspaces but not (very much) more, whereas
 restricting  determinants requires far richer information. This mimics the difference between the relational
  complexity of $\Sn{n}$ and $\An{n}$ in their natural actions, where
  requiring a map to be a permutation is ``local'', but requiring a
  permutation to be even is a ``global'' property.

We next bound the relational complexity of the remaining groups with socle $\PSL_n(q)$ that act on $\Omega_1$.  
For $k \in \mathbb{Z}_{> 0}$, the number of distinct prime divisors of $k$ is denoted by $\omega(k)$, with $\omega(1) = 0$.

\begin{thm}\label{thm:B}
Let $\ogp$ satisfy $\PSL_n(q) \leq \ogp \leq \PGamL_n(q)$, and let
$e:=|\ogp:\ogp \cap \PGL_n(q)|$. Suppose that $e > 1$, so that
$q \ge 4$ and $\ogp \not \le \PGL_n(q)$.
\begin{enumerate}[label=\rm{(\roman*)}]
\item \label{B:n=2} If $n=2$ and $q \geq 8$, then $4+\omega(e) \geq 
  \RC(\ogp, \Omega_1) \geq 4$, except that
    $\RC(\PSigL_2(9), \Omega_1) = 3$.
\item \label{B:ngeq3} If $n \geq 3$, then 
  $$2n-1+\omega(e) \geq \RC(\ogp, \Omega_1) \geq \begin{cases}
  n+2 & \text{ always}, \\
  n+3 & \text{ if } \PGL_n(q) < \ogp, \\ 
2n-2 & \text{ if } \ogp \le \PSigL_n(q) \ne \PGamL_n(q). \end{cases}$$
\end{enumerate}
\end{thm}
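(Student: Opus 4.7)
The plan splits Theorem~\ref{thm:B} into upper and lower bounds, both of which exploit the short exact sequence $1\to\ogp\cap\PGL_n(q)\to\ogp\to C_e\to 1$, where $C_e$ is generated by the coset of a field automorphism $\sigma$.

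For the upper bounds, I apply the standard inequality $\RC(\ogp,\Omega_1)\le\H(\ogp,\Omega_1)+1$ and aim to show that $\H(\ogp,\Omega_1)\le\H(\ogp\cap\PGL_n(q),\Omega_1)+\omega(e)$, which combined with Proposition~\ref{prop:lodabounds} delivers the upper bounds. Given an irredundant subset $\Delta=\{U_1,\ldots,U_m\}\subseteq\Omega_1$ for $\ogp$ ordered so that $K_i:=\stab{\ogp}{\{U_1,\ldots,U_i\}}$ strictly decreases, I classify each drop $K_{i-1}>K_i$ as \emph{linear} if $L_{i-1}>L_i$, where $L_i:=K_i\cap\PGL_n(q)$, and as \emph{field} otherwise. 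The linear-drop indices yield an ordered subsequence along which the stabilisers in $\ogp\cap\PGL_n(q)$ strictly decrease, hence contribute at most $\H(\ogp\cap\PGL_n(q),\Omega_1)$. The key lemma is that the number of field drops is at most $\omega(e)$: each such drop forces the image of $K_i$ in $C_e$ to exclude a new prime divisor of $e$, by an analysis of the smallest subfield of $\F_q$ over which the added projective point is proportional to an $\F_p$-rational vector.

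For the lower bounds, I construct witness $k$-tuples $\X,\Y\in\Omega_1^k$ that are $(k-1)$-equivalent but not equivalent under $\ogp$. The main device is the \emph{$\F_p$-trick}: if each entry of $\X$ and $\Y$ is a projective point spanned by a vector in $\F_p^n$, then every $\tau\in\Aut(\F_q)$ fixes each such point setwise, so writing any $h\in\ogp$ as $h=g\tau$ with $g\in\ogp\cap\PGL_n(q)$ gives $h(\X)=g(\X)$; hence $\ogp$-equivalence of $\F_p$-rational tuples reduces to $(\ogp\cap\PGL_n(q))$-equivalence, while $(k-1)$-equivalence under $\ogp\cap\PGL_n(q)$ transfers upward to $\ogp$. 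Applying this to witness tuples from Theorem~\ref{thm:A} — which can be chosen over the prime field — yields $\RC(\ogp,\Omega_1)\ge\RC(\ogp\cap\PGL_n(q),\Omega_1)$, giving $\ge n+2$ in general and $\ge 2n-2$ when $\ogp\le\PSigL_n(q)$ (so that $\ogp\cap\PGL_n(q)=\PSL_n(q)$ and the $\PSL$-witnesses from Theorem~\ref{thm:A}\ref{A:notPGL} apply). For the $n+3$ bound when $\ogp>\PGL_n(q)$, the $\F_p$-trick alone only yields $n+2$; I instead augment a length-$(n+2)$ witness for $\PGL_n(q)$ by an extra projective point $\langle v\rangle$ with $v\in\F_q^n\setminus\F_p^n$ chosen so that nontrivial cosets of $\PGL_n(q)$ in $\ogp$ cannot simultaneously realign $\langle v\rangle$ and the $\PGL_n(q)$-witness coordinates, yet every $(n+2)$-subtuple remains matchable.

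The principal obstacle is the $n+3$ lower bound for $\ogp>\PGL_n(q)$: the supplementary point $\langle v\rangle$ must defeat every nontrivial coset representative $\tau\in\ogp\setminus\PGL_n(q)$ while still allowing each of the $n+3$ length-$(n+2)$ subtuples to be matched in $\ogp$, requiring a delicate choice of the field of definition of $v$ relative to the generator $\sigma$ of $C_e$. A secondary issue is the small exception $\PSigL_2(9)$ in part~\ref{B:n=2}, where $\RC=3<4$ due to an accidental coincidence in this case, and which must be handled by direct inspection.
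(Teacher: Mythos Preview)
Your upper–bound strategy is the same as the paper's: invoke $\RC\le\H+1$, then bound $\H(\ogp)$ by $\H(\ogp\cap\PGL_n(q))+\omega(e)$ via Harper's Lemma~\ref{lem:Harper} and read off the height of $\ogp\cap\PGL_n(q)$ from Proposition~\ref{prop:lodabounds}. (Your ``smallest subfield'' justification for why each field drop kills a new prime is not how Harper's lemma is proved—his argument is purely group–theoretic, using only that $K/N$ is cyclic—so this part of your sketch would need to be replaced by a citation, but the overall route is identical.)

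The lower–bound strategy, however, has a genuine gap. Your ``$\F_p$-trick'' rests on two unsupported claims.
First, the decomposition $h=g\tau$ with $g\in\ogp\cap\PGL_n(q)$ need not exist: writing $h\in\ogp$ as $g\tau$ with $g\in\PGL_n(q)$ and $\tau\in\Aut(\F_q)$ is always possible, but $g$ lies in $\ogp$ only if the pure field automorphism $\tau$ does, which fails for ``twisted'' subgroups such as $\langle\PSL_n(q),\delta\phi\rangle$. So for $\F_p$-rational $\X,\Y$ you only obtain that $\Y\in\X^{\ogp}$ implies $\Y\in\X^{g}$ for some $g\in\PGL_n(q)$, not for some $g\in\ogp\cap\PGL_n(q)$.
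Second, and more seriously, the assertion that the Theorem~\ref{thm:A} witnesses ``can be chosen over the prime field'' is not justified and is in fact problematic: the paper's $(n+2)$-witness for $\PGL_n$ (Proposition~\ref{prop:PGLlow}) needs $\lambda\in\F^{*}$ with $\lambda\neq\lambda^{-1}$, which does not exist in $\F_p^{*}$ for $p\in\{2,3\}$; and the $(2n-2)$-witness (Proposition~\ref{prop:PSLlower}) needs $\alpha\in\F^{*}$ outside the relevant determinant set, which for $H=\SL_n$ means $\alpha$ is not an $n$-th power—again impossible in $\F_2^{*}=\{1\}$. You also give no argument for the $n=2$ lower bound $\RC\ge 4$ beyond noting the $\PSigL_2(9)$ exception.

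The paper avoids all of this by abandoning any reduction to $\ogp\cap\PGL_n(q)$ and instead building, for each lower bound, an explicit witness pair that confronts the semilinear elements of $\ogp$ head-on: Lemma~\ref{lem:PGamL2(q)} for $n=2$, Lemma~\ref{lem:PSL-n=3} for $n=3$, Lemma~\ref{lem:PGamLn(q)} for the $n+3$ bound, Lemma~\ref{lem:general_n+2} for the general $n+2$ bound, and Proposition~\ref{prop:PSLlower} (whose hypothesis and proof already allow $H\le\GamL_n(\F)$) for the $2n-2$ bound. In each case the tuples are chosen using some $\lambda\in\F_q$ with $\lambda^{\psi}\neq\lambda$ for an appropriate $\psi\in\Aut(\F_q)$ arising from $\ogp$, and the non-equivalence is verified directly inside $\GamL_n(q)$ rather than by descent to $\PGL_n(q)$.
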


In fact, the lower bound of $2n-2$ holds for a larger family of groups; see
  Proposition~\ref{prop:PSLlower}. 

\begin{thm}\label{thm:Bm}
Let $\ogp$ satisfy $\PSL_n(q) \leq \ogp \leq \PGamL_n(q)$ and let
$e:=|\ogp:\ogp \cap \PGL_n(q)|$. 
 Fix $m \in \{2,\ldots, {\lfloor \frac{n}{2} \rfloor} \}$. Then
$$(m+1)n-2m+2+\omega(e) \geq \RC(\ogp,\Omega_m)\geq mn-m^2+1.$$
\end{thm}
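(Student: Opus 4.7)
We establish the two bounds separately, each by reducing to the $1$-subspace action, which is analysed in Proposition \ref{prop:lodabounds} and Theorems \ref{thm:A} and \ref{thm:B}.

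\textbf{Lower bound.} To prove $\RC(\ogp, \Omega_m) \ge mn - m^2 + 1$, the plan is to construct two tuples $\X, \Y \in \Omega_m^k$ (for some $k > mn - m^2$) that are $(mn-m^2)$-equivalent under $\PSL_n(q)$ --- and hence under $\ogp$ --- but lie in different $\PGamL_n(q)$-orbits, and therefore in different $\ogp$-orbits. Observe that $mn - m^2 = m(n-m) = \dim \Omega_m$, the dimension of the Grassmannian. The construction will be chosen so that $\X$ and $\Y$ agree in most coordinates and differ only in a few positions where a ``global'' Grassmannian invariant (an analogue of a cross-ratio) distinguishes them. Verification of $r$-equivalence for $r = mn - m^2$ then reduces to a transitivity/dimension count showing that any suitably generic $r$-subtuple of $m$-subspaces can be carried by an element of $\PSL_n(q)$ to any other tuple with the same ``local'' incidence pattern.

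\textbf{Upper bound.} To prove $\RC(\ogp, \Omega_m) \le (m+1)n - 2m + 2 + \omega(e)$, I would use the standard inequality $\RC(\ogp, \Omega_m) \le \H(\ogp, \Omega_m) + 1$, reducing the problem to bounding the height on $m$-subspaces by $(m+1)n - 2m + 1 + \omega(e)$. Given a height-witnessing set $\Delta = \{U_1, \ldots, U_k\} \subseteq \Omega_m$, pick an ordered basis of each $U_i$ to obtain $km$ ordered $1$-subspaces. By relating the pointwise stabilizer in $\ogp$ of the $U_i$'s to that of the chosen $1$-subspaces, and discarding basis vectors whose removal does not enlarge the pointwise stabilizer, one obtains a subset of $\Omega_1$ whose cardinality is linear in $k$ and which plays the role of a height-witnessing set for the $1$-subspace action. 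Feeding the resulting set into the bound $\H(\ogp, \Omega_1) \le 2n - 2 + \omega(e)$ from Proposition \ref{prop:lodabounds} and Theorem \ref{thm:B} yields the desired inequality on $k$.

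\textbf{Main obstacle.} The main difficulty lies in the upper bound, specifically in the ``lifting'' from the $m$-subspace action to the $1$-subspace action: the basis of each $U_i$ is only determined up to $\GL_m(\F)$-action, so the pointwise stabilizer of the $U_i$'s is strictly larger than the pointwise stabilizer of any particular basis, and the height information does not transfer in a canonical way. The constant $(m+1)n - 2m + 2$ (rather than something closer to $2mn$) reflects that not all $m$ basis vectors of each subspace contribute independently to the height, and careful book-keeping is needed to count how many $1$-subspaces may be discarded. The $\omega(e)$ adjustment is handled by analysing the extra structure that field automorphisms may fix. A secondary challenge in the lower bound is verifying that the chosen Grassmannian invariant is genuinely undetected by any $(mn-m^2)$-subtuple while lying in distinct $\PGamL_n(q)$-orbits.
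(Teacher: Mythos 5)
There is a genuine gap in both halves of your proposal.

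For the upper bound, the reduction to the $1$-space action cannot work in the form you describe, and not only for the book-keeping reason you flag. The stabiliser of $U_i \in \Omega_m$ is the \emph{setwise} stabiliser of $U_i$ as a subspace, which strictly contains the pointwise stabiliser of the $1$-subspaces of any basis of $U_i$; so a strictly decreasing chain of stabilisers in the $\Omega_m$-action does not give a strictly decreasing chain in the $\Omega_1$-action of comparable length, and the derived set of $1$-spaces need not witness height at all. Worse, the arithmetic rules out any reduction of this shape: the target bound $(m+1)n-2m+1+\omega(e)$ grows with $m$, whereas $\H(\ogp,\Omega_1) \le 2n-2+\omega(e)$ does not, so "feeding a set of size linear in $k$" into the $\Omega_1$ height bound either proves something false (a bound independent of $m$) or, after rescaling, multiplies the $\omega(e)$ term incorrectly. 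The paper avoids $\Omega_1$ entirely: it quotes the bound $\I(\PGL_n(\F),\Omega_m) \le (m+1)n-2m+1$ on the irredundant base size of the $m$-space action itself (from [KRD, Thm 3.1]), uses that irredundant base size is monotone under passing to subgroups and dominates height, and then applies Harper's lemma (Lemma~\ref{lem:Harper}) to absorb the cyclic quotient by $\PGL_n(q)$ into the $+\omega(e)$ term.

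For the lower bound, your strategy (two $(mn-m^2)$-equivalent tuples in distinct $\PGamL_n(q)$-orbits, with the witnessing elements taken in $\SL_n$ so the argument applies to every $\ogp$) is exactly the paper's, but the entire content lies in the construction and its verification, which you defer. The paper's Proposition~\ref{prop:lowerm} takes tuples of length $mn-m^2+1$ built from the subspaces $U_{ij} = \langle B_i, e_j\rangle$, $V_i = \langle B_i, e_i+e_n\rangle$ versus $W_i = \langle B_i, e_n\rangle$, and one further $m$-space in each tuple; non-equivalence is shown by forcing any candidate $g$ to fix each $\langle e_i \rangle$ and deriving a contradiction from the final coordinate, and $(mn-m^2)$-equivalence is shown by exhibiting explicit lower unitriangular (hence determinant-one) matrices $g_\ell$ for each deleted position. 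A generic appeal to "a Grassmannian invariant" and "a transitivity/dimension count" does not substitute for this; in particular there is no transitivity statement available for $(mn-m^2)$-tuples of $m$-spaces with a prescribed incidence pattern, and the coincidence $mn-m^2 = \dim \PG_m(V)$ plays no role in the paper's argument.
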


GAP \cite{GAP} calculations using \cite{RComp} yield
$\RC(\PGamL_2(3^5),\Omega_1)=5 = 4 + \omega(5)$ and
$\RC(\PGamL_4(9),\Omega_1) = 8 = 7 + \omega(2)$, so the upper bounds of Theorem~\ref{thm:B}
cannot be improved in
general. On the other hand, $\RC(\PGamL_3(2^6),\Omega_1)$ achieves the lower bound of $6 =
3+3 < 7 = 5+\omega(6)$.  Additionally, $\RC(\PSL_4(2),\Omega_2)$ achieves the lower bound of $5$ from Theorem~\ref{thm:Bm}, while $\RC(\PSL_4(3),\Omega_2)=6$ and $\RC(\PGL_4(3),\Omega_2)= \RC(\PSL_4(4),\Omega_2) = \RC(\PGamL_4(4),\Omega_2) = 8$.

It is straightforward to use our results to bound the relational complexity in terms of the degree. For example, $\RC(\PGL_n(q), \Omega_1) <
\log(|\Omega_1|)+3$. Many of our arguments also apply to the case where $\F$ is an
arbitrary field; see Theorem~\ref{thm:lower_bound_n}, Lemmas~\ref{lem:PGamLn(q)}  and
\ref{lem:general_n+2}, and Propositions~\ref{prop:PSLlower} and \ref{prop:lowerm}.

This paper is structured as follows. In Section~\ref{sec:upper1}, we fix some more notation and prove some elementary lemmas, then prove upper bounds on the relational complexity of the relevant actions %
on $1$-spaces.  In Section~\ref{sec:lower1}, we shall prove
corresponding lower bounds, and then prove Theorems~\ref{thm:A} and
\ref{thm:B}. Finally, in Section~\ref{sec:mspace}, we
prove Theorem~\ref{thm:Bm}.%

\section{Action on 1-spaces: upper bounds}\label{sec:upper1}

In this section we present several preliminary lemmas, and then
determine upper bounds for the relational complexity of groups $\gp$,
with $\SL_n(\F) \trianglelefteq \gp \le \GL_n(\F)$,  acting on $\Omega_1$.

We begin with some notation that we will use throughout the remainder of the paper.
Let $\{e_1, \ldots, e_{n}\}$ be a basis for $V$. For a set $\Gamma$,
a tuple $\X = (\x_i)_{i = 1}^k \in \Gamma^k$ and a permutation $\sigma \in \Sn{k}$, we write $\X^\sigma$ to denote the $k$-tuple $(x_{1^{\sigma^{-1}}}, \ldots, x_{k^{\sigma^{-1}}})$. 
For a tuple $\X \in
\Omega_m^k$, we write $\langle \X \rangle$ to denote the subspace of $V$ spanned by all entries in $\X$. For  $i \in \{1, \ldots, k\}$, we shall write $(\X \setminus \x_i)$ to denote the subtuple of $\X$ obtained by deleting $\x_i$.

In the remainder of this section, let $\Omega:=\Omega_1 = \PG_1(V)$ and let $\gp$ be a group such that $\SL_n(\F) \trianglelefteq \gp \leq  \GL_n(\F)$.
Recall from Theorem \ref{thm:CherlinNonzero} that $\RC(\GL_n(\F),\Omega)
= n$ when $|\F| = 2$. Thus we shall assume throughout this section that $|\F| \ge 3$ and $n \ge 2$. 

We write $D$ to denote the subgroup of diagonal matrices of $\GL_n(\F)$ (with respect to the basis $\{e_1,\ldots, e_n\}$), and $\Delta := \big\{\langle e_i \rangle \mid i \in \{1,\ldots , n\} \big\}$. Observe that $D$ is nontrivial since $|\F| > 2$, and that $D
\cap \gp$ is the pointwise stabiliser $\gp_{(\Delta)}$.
For a vector $v = \sum_{i=1}^{n} \alpha_i e_i \in V$, the \emph{support} $\supp{v}$ of $v$ is the set $\{i \in \{1, \ldots, n\} \mid \alpha_i \ne 0\}$. Additionally, the \emph{support} $\supp{W}$ of a subset  $W$ of $V$ is the set $\bigcup_{w \in W} \supp{w}$, and similarly for tuples. 
In particular, $\Delta$ is the set of subspaces of $V$ with support of size $1$, and $\supp{W} = \supp{\langle W \rangle}$ for all subsets $W$ of $V$.

\subsection{Preliminaries}

We begin our study of the action of $\gp$ on $\Omega$ with a pair of lemmas that will enable us to consider only tuples of a very restricted form.

\begin{lemma}\label{lem:tuplefirstsubs-norm} Let $k \geq n$, and let $\X, \Y \in \Omega^k$ be such that $\X \s{n}{\gp} \Y$. Additionally, let $a:=\dim(\langle \X \rangle)$. Then there exist $\X'= (\x'_{1}, \ldots, \x'_{k}),\Y' = (\y'_{1}, \ldots, \y'_{k}) \in \Omega^k$ such that 
\begin{enumerate}[label=\rm{(\roman*)}]
\item $\x'_{i} = \y'_{i} = \langle e_i \rangle$ for $i \in \{1, \ldots, a\}$, and
\item $\X \s{r}{\gp} \Y$ if and only if $\X' \s{r}{\gp} \Y'$, for each $r \in \{1, \ldots, k\}$.
\end{enumerate}
\end{lemma}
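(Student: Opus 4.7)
The plan is to construct $\X'$ and $\Y'$ from $\X$ and $\Y$ by two operations that visibly preserve $r$-equivalence under $\gp$: (a) apply a single permutation $\sigma \in \Sn{k}$ to the indices of both tuples so that a spanning subtuple of $\X$ occupies the first $a$ positions, and (b) act entrywise by some $g_1^{-1}$ on the resulting first tuple and by some $g_2^{-1}$ on the second, for suitably chosen $g_1, g_2 \in \SL_n(\F) \le \gp$, so that the leading $a$ entries of each become $\langle e_1\rangle,\ldots,\langle e_a\rangle$. Since each operation preserves $\s{r}{\gp}$ for every $r$, condition (ii) will be automatic once (i) is arranged; the real work lies in producing $g_1$ and $g_2$.

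Concretely, I would pick indices $j_1,\ldots,j_a \in \{1,\ldots,k\}$ with $\x_{j_1},\ldots,\x_{j_a}$ spanning $\langle \X \rangle$; these are linearly independent since $\dim \langle \X \rangle = a$. Because $a \le n$ and $\X \s{n}{\gp} \Y$, we also have $\X \s{a}{\gp} \Y$, so there exists $h \in \gp$ with $\x_{j_s}^h = \y_{j_s}$ for every $s \le a$; in particular $\y_{j_1},\ldots,\y_{j_a}$ are also linearly independent. Picking nonzero representative vectors from each of these two lists of $1$-spaces, extending each list to a basis of $V$, and rescaling one chosen basis vector to force determinant~$1$, I obtain $g_1, g_2 \in \SL_n(\F) \le \gp$ with $\langle e_s \rangle^{g_1} = \x_{j_s}$ and $\langle e_s \rangle^{g_2} = \y_{j_s}$ for all $s \le a$. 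Let $\sigma \in \Sn{k}$ be any permutation with $s^{\sigma^{-1}} = j_s$ for $s \le a$, and define $\X', \Y'$ coordinatewise by $\x'_t := (\x_{t^{\sigma^{-1}}})^{g_1^{-1}}$ and $\y'_t := (\y_{t^{\sigma^{-1}}})^{g_2^{-1}}$ for $t \in \{1,\ldots,k\}$; then part (i) is immediate.

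Verifying (ii) reduces to two routine observations. First, simultaneously permuting the indices of both tuples preserves $\s{r}{\gp}$ for every $r$, because the collection of $r$-subsets of $\{1,\ldots,k\}$ is merely relabelled by $\sigma$. Second, for any $A, B \in \gp$ the map $h \mapsto A^{-1} h B$ is a bijection of $\gp$ converting each $h$ with $\x_i^h = \y_i$ on an $r$-subset $I$ into an $h'$ with $(\x_i^A)^{h'} = \y_i^B$ on the same $I$, and conversely; applied with $A = g_1^{-1}$, $B = g_2^{-1}$ this gives the desired equivalence of $r$-equivalences.

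The only mildly delicate point in the argument is arranging $g_1, g_2 \in \gp$ rather than merely in $\GL_n(\F)$; this is exactly what the rescaling step above accomplishes, using the freedom to multiply one basis vector by an arbitrary element of $\F^\times$ to achieve determinant~$1$, and hence membership in $\SL_n(\F) \trianglelefteq \gp$.
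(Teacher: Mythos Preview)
Your proof is correct and follows essentially the same approach as the paper: permute so that a spanning $a$-subtuple of $\X$ sits in the first $a$ positions, use $a$-equivalence to see the corresponding entries of $\Y$ are independent, then use transitivity of $\SL_n(\F) \le \gp$ on independent tuples of $1$-spaces to move both leading $a$-tuples to $(\langle e_1\rangle,\ldots,\langle e_a\rangle)$. The only cosmetic difference is that the paper constructs its two group elements as $fh$ and $h$ (composing the $a$-equivalence witness $f \in \gp$ with a single $h \in \SL_n(\F)$), whereas you build two elements $g_1, g_2 \in \SL_n(\F)$ independently; both are equally valid.
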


\begin{proof}
  Observe that there exists $\sigma \in \Sn{k}$ such that
  $\langle \X^{\sigma}\rangle = \langle
  \x_{1^{\sigma^{-1}}}, \ldots, \x_{a^{\sigma^{-1}}} \rangle$. Since
  $\X \s{n}{\gp} \Y$ and $a \le n$, the definition of $a$-equivalence yields $\X^{\sigma} \s{a}{\gp} \Y^{\sigma}$. Hence there exists an $f \in \gp$ such that $x_{i^{\sigma^{-1}}}^f =  \y_{i^{\sigma^{-1}}}$ for all $i \in \{1, \ldots, a\}$, and so $\langle \Y^\sigma \rangle=\langle \y_{1^{\sigma^{-1}}}, \ldots, \y_{a^{\sigma^{-1}}} \rangle$.
Since $\SL_n(\F)$ is transitive on $n$-tuples of linearly independent $1$-spaces, there exists $h \in \SL_n(\F) \le H$ such that
$\x_{i^{\sigma^{-1}}}^{fh} = \y_{i^{\sigma^{-1}}}^h = \langle e_i \rangle$ for $i \in \{1, \ldots, a\}$. Define $\X', \Y' \in \Omega^k$ by $x_i'=x_{i^{\sigma^{-1}}}^{fh}$ and $y_i'=y_{i^{\sigma^{-1}}}^{h}$, so that $\X' = \X^{\sigma fh}$ and $\Y'  = \Y^{\sigma h}$. 
Then $\X'\s{r}{\gp} \Y'$ if and only if $\X^\sigma \s{r}{\gp} \Y^{\sigma}$, which holds if and only if $\X \s{r}{\gp} \Y$.	
\end{proof}

\begin{lemma}\label{lem:tuplefulldim} Let $k \ge r \geq n$, and let $\X, \Y \in \Omega^k$ be such that $\X\s{r}{\gp}\Y$. Additionally, let ${a :=\dim(\langle \X \rangle)}$ and assume that $a<n$. If $a = 1$, or if $\RC(\GL_{a}(\F),\PG_1(\F^a)) \le r$, then $\Y \in \X^\gp$.
\end{lemma}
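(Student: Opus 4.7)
The plan is to reduce the question to the relational complexity of the smaller group $\GL_a(\F)$ acting on $\PG_1(W)$, where $W := \langle \X \rangle$, and then invoke the hypothesis (handling $a = 1$ directly).

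First, since $r \ge n$ yields $\X \s{n}{\gp} \Y$, I would apply Lemma~\ref{lem:tuplefirstsubs-norm} to reduce to the case $\x_i = \y_i = \langle e_i \rangle$ for each $i \in \{1, \ldots, a\}$, so that $W = \langle e_1, \ldots, e_a \rangle$. Next, I would verify that every entry of $\Y$ also lies in $W$: for each index $i > a$, the subset $\{1, \ldots, a, i\}$ has size $a+1 \le n \le r$, and passing from $r$-equivalence to $(a+1)$-equivalence supplies $h \in \gp$ with $\langle e_j\rangle^h = \langle e_j \rangle$ for all $j \le a$ and $\x_i^h = \y_i$. Any such $h$ preserves each $\langle e_j \rangle$ setwise, hence preserves $W$, whence $\y_i \in W$. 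When $a = 1$, the $1$-space $W = \langle e_1 \rangle$ forces $\X = \Y$, and the conclusion is immediate.

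Assume now $a \ge 2$. The central step is to show that, when $\X$ and $\Y$ are viewed as tuples in $\PG_1(W)^k$, they are $r$-equivalent under $\GL(W)$. Given any $r$-subset $I$, $\gp$-equivalence supplies an $h \in \gp$ with $\x_j^h = \y_j$ for all $j \in I$. Picking nonzero vectors $v_j \in \x_j$ and $w_j \in \y_j$, we have $h(v_j) = \lambda_j w_j$ for some $\lambda_j \in \F^*$. I would then fix $J \subseteq I$ indexing a basis of $\langle v_j : j \in I \rangle$, extend $\{v_j\}_{j \in J}$ to a basis of $W$, and define $g \in \GL(W)$ by $g(v_j) := \lambda_j w_j$ for $j \in J$, extending freely on the remainder of the basis so that $g$ is invertible; this is possible because the invertibility of $h$ forces $\{\lambda_j w_j\}_{j \in J}$ to be linearly independent in $W$. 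Since $h$ is linear, the dependence relations among the $v_i$ coincide with those among the $\lambda_i w_i$, so $g(v_i) = \lambda_i w_i$ for every $i \in I$. Thus $\X \s{r}{\GL(W)} \Y$, and the hypothesis $\RC(\GL_a(\F), \PG_1(\F^a)) \le r$ yields a single $g \in \GL(W)$ with $\X^g = \Y$.

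Finally, I would lift $g$ to $\gp$ using the block diagonal element $\mathrm{diag}(g, \det(g)^{-1}, 1, \ldots, 1) \in \SL_n(\F) \le \gp$, which is well-defined because $a < n$ and restricts to $g$ on $W$; this witnesses $\Y \in \X^\gp$. The main obstacle is the construction in the previous paragraph: an arbitrary $h \in \gp$ need not stabilise $W$, so one cannot simply restrict $h$, and must instead exploit its linearity to build a substitute $g$ entirely inside $\GL(W)$.
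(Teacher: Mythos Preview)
Your proof is correct and follows essentially the same route as the paper's: reduce via Lemma~\ref{lem:tuplefirstsubs-norm}, pass to $\GL_a(\F)$ acting on $\PG_1(W)$, invoke the hypothesis on $\RC(\GL_a(\F),\PG_1(\F^a))$, and lift back via the block-diagonal element $g \oplus \diag(\det(g)^{-1},1,\ldots,1) \in \SL_n(\F)$. The only difference is expository: the paper asserts in one line that $\X \s{r}{\gp} \Y$ yields $\X \s{r}{\GL_a(\F)} \Y$, whereas you spell out the construction of a substitute $g \in \GL(W)$ from an $h \in \gp$ that need not stabilise $W$---a detail the paper leaves implicit but which your write-up makes explicit and correct.
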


\begin{proof}
  If $a = 1$, then all entries of $\X$ are equal, so since $r \ge n \ge 2$,
  we see that $\X \s{r}{\gp} \Y$  directly implies $\Y \in \X^\gp$. We
  will therefore suppose that $a \ge 2$ and
  $\RC(\GL_{a}(\F),\PG_1(\F^a))\le r$. By
  Lemma~\ref{lem:tuplefirstsubs-norm}, we may assume without loss of
  generality that $\langle \X \rangle = \langle \Y \rangle = \langle
  e_1 , \ldots, e_a \rangle$. As $\X \s{r}{\gp} \Y$ and
  $\RC(\mathrm{GL}_a(\F), \PG_1(\F^a))\le r$, there exists an element
  $g \in \mathrm{GL}_a(\F)$ mapping $\X$ to $\Y$, considered as tuples
  of subspaces of $\langle e_1 , \ldots, e_a \rangle$. We now let $h$
  be the diagonal matrix $\diag(\det({g}^{-1}), 1, \ldots 1) \in
  \GL_{n-a}(\F)$, and observe that $g \oplus h \in \SL_n(\F)$ maps
  $\X$ to $\Y$ and lies in $H$, since $\SL_n(\F)$ lies in $H$. Thus $\Y \in \X^\gp$.
\end{proof}

We now begin our study of some particularly nice $k$-tuples.

\begin{lemma}
\label{lem:eqsupps}
Let $k \geq n+1$, and let $\X, \Y \in \Omega^k$ be such that $\x_i= \y_i =
\langle e_i \rangle$ for $i \in \{1, \ldots, n\}$ and $\X \s{n+1}{\gp} \Y$. Then $\supp{\x_i} = \supp{\y_i}$ for all $i \in \{1,\ldots,k\}$.
\end{lemma}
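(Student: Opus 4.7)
The plan is to exploit the fact that any element of $\gp \leq \GL_n(\F)$ fixing each of the $1$-spaces $\langle e_1 \rangle, \ldots, \langle e_n \rangle$ must be diagonal with respect to the basis $\{e_1, \ldots, e_n\}$, and that diagonal matrices preserve the support of every vector and hence of every subspace.

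For indices $i \in \{1, \ldots, n\}$, the claim is immediate from $\x_i = \y_i$. So I would fix an arbitrary $i \in \{n+1, \ldots, k\}$ and apply the definition of $(n+1)$-equivalence to the index subset $\{1, 2, \ldots, n, i\}$, which has cardinality exactly $n+1$. This yields some $h \in \gp$ satisfying $\langle e_j \rangle^h = \langle e_j \rangle$ for every $j \in \{1, \ldots, n\}$ and $\x_i^h = \y_i$.

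Since $h$ is a linear transformation fixing every line $\langle e_j \rangle$ setwise, there exist nonzero scalars $\lambda_1, \ldots, \lambda_n \in \F^\times$ with $e_j^h = \lambda_j e_j$ for each $j$; in other words, $h \in D \cap \gp = \gp_{(\Delta)}$. For any vector $v = \sum_{j=1}^n \alpha_j e_j \in V$, we have $v^h = \sum_{j=1}^n \lambda_j \alpha_j e_j$, and because each $\lambda_j \ne 0$, the set of indices $j$ with $\lambda_j \alpha_j \ne 0$ coincides with $\supp{v}$. Extending this to spans, $h$ preserves $\supp{W}$ for every subspace $W$ of $V$. Applying this to $W = \x_i$ yields $\supp{\y_i} = \supp{\x_i^h} = \supp{\x_i}$, as required.

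There is no real obstacle here: the whole argument is a short consequence of the fact that the stabiliser in $\GL_n(\F)$ of the coordinate frame $\Delta$ is exactly the diagonal subgroup $D$, and that a diagonal matrix acts monomially on coordinates. The only mild care needed is to observe that the index set $\{1, \ldots, n, i\}$ has size $n+1$, matching the hypothesis $\X \s{n+1}{\gp} \Y$, which is precisely why the bound $n+1$ (rather than $n$) appears in the statement.
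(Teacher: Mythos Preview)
Your proof is correct and follows essentially the same approach as the paper: both apply $(n+1)$-equivalence to the index set $\{1,\ldots,n,i\}$ to obtain an element of $H_{(\Delta)} = D \cap H$ sending $x_i$ to $y_i$, and then use that diagonal matrices preserve supports. Your version is slightly more explicit about why diagonal matrices preserve supports, but the argument is the same.
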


\begin{proof}
It is clear that $\supp{\x_i}=\{i\}=\supp{\y_i}$ when $i \in \{1, \ldots, n\}$. Assume therefore that $i > n$. Since $\X \s{n+1}{\gp} \Y$, there exists $g \in \gp$ such that $(\langle e_1 \rangle , \ldots, \langle e_{n} \rangle, \x_i)^g=(\langle e_1 \rangle , \ldots, \langle e_{n} \rangle, \y_i)$. Observe that $g \in \gp_{(\Delta)} = D \cap \gp$, and so $\supp{\x_i} = \supp{\y_i}$.
\end{proof}

Our final introductory lemma collects several  elementary observations regarding %
  tuples of subspaces in $\oD := \Omega \setminus \Delta$, the set of $1$-dimensional subspaces of support size greater than $1$. 
For $r \ge 1$ and $\A, \B \in \oD^r$, we let $\M_{\A, \B}$ consist of all matrices in $\M_{n, n}(\F)$ that fix $\langle e_i \rangle$ for $1 \leq i \leq n$ and map $a_j$ into $b_j$ for $1 \leq j \leq r$. Notice that all matrices in $\M_{\A, \B}$ are diagonal, and  that if $g, h \in \M_{\A, \B}$, then $a_j^{g+h} = a_j^{g} + a_j^{h} \leq
  b_j$, so $\M_{\A, \B}$ is a subspace of $\M_{n, n}$.
For an $n \times n$ matrix $g = (g_{ij})$ and a subset $I$ of $\{1,\ldots,n\}$, we write $g|_I$ to denote the submatrix of $g$ consisting of the rows and columns with indices in $I$.

\begin{lemma}
\label{lem:submats}
Let $r \ge 1$, and let $\A =(a_1,\ldots, a_r), \B = (b_1, \ldots, b_r) \in \oD^{\, r}$. %
\begin{enumerate}[label=\rm{(\roman*)}]
\item \label{submats1} Let $a_i$ and $a_j$ be \textup{(}possibly equal\textup{)} elements of $\A$ such that $\supp{a_i} \cap \supp{a_j} \ne \varnothing$, and let $g \in \M_{\A, \A}$. Then $g|_{\supp{a_i,a_j}}$ is a scalar.
\item \label{dimofEr2} Suppose that $A \s{1}{D} \B$. Then for $1 \leq i \leq r$, the space
    $(\M_{(a_i), (b_i)})|_{\supp{a_i}} $ is one-dimensional, so the
    dimension of $\M_{(a_i), (b_i)}$ is equal to $n+1-|\supp{a_{i}}|$.
  \item \label{submats2} For a subtuple $\A'$ of $\A$, let $S := \{1, \ldots, n\} \setminus \supp{\A'}$. Then
    $\dim((\M_{\A', \A'})|_{S}) = |S|$.
\end{enumerate}
\end{lemma}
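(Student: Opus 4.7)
The plan is to unfold the definitions. Since every element of $\M_{\A, \B}$ fixes each $\langle e_i \rangle$ it is diagonal; write $g = \diag(g_1, \ldots, g_n)$. For vectors $v = \sum_k \alpha_k e_k$ and $w = \sum_k \beta_k e_k$, the condition that $v^g$ lies in $\langle w \rangle$ is equivalent to the existence of $\lambda \in \F$ with $\alpha_k g_k = \lambda \beta_k$ for every $k$. All three parts follow from this single equation.

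For part~\ref{submats1}, I would apply the relation to each of $a_i$ and $a_j$ in turn, taking $w$ to be $a_i$, respectively $a_j$. This forces $g_k = \lambda_i$ for all $k \in \supp{a_i}$ and $g_k = \lambda_j$ for all $k \in \supp{a_j}$, and any $k$ in the nonempty intersection $\supp{a_i} \cap \supp{a_j}$ yields $\lambda_i = \lambda_j$. Hence $g|_{\supp{a_i,a_j}}$ is a scalar matrix.

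For part~\ref{dimofEr2}, the hypothesis $\A \s{1}{D} \B$ produces a diagonal matrix sending $a_i$ to a nonzero scalar multiple of $b_i$; since a diagonal matrix can only preserve or shrink support, this yields $\supp{b_i} \subseteq \supp{a_i}$, and the symmetric argument gives equality. Writing $S_i := \supp{a_i} = \supp{b_i}$ and noting that both $\alpha_k$ and $\beta_k$ are nonzero on $S_i$, the equations $\alpha_k g_k = \lambda \beta_k$ uniquely determine $g_k = \lambda \beta_k \alpha_k^{-1}$ for each $k \in S_i$, so $(\M_{(a_i),(b_i)})|_{S_i}$ is parametrised by the single scalar $\lambda$ and is one-dimensional. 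The $n - |S_i|$ diagonal entries outside $S_i$ are unconstrained, yielding the stated total dimension $n + 1 - |\supp{a_i}|$.

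For part~\ref{submats2}, I observe that for any $c \in \F^{S}$, the diagonal matrix with $c_k$ in position $k$ for $k \in S$ and zeros elsewhere lies in $\M_{\A', \A'}$: it maps each $\langle e_i \rangle$ into itself, and $\supp{a_j'} \cap S = \varnothing$ forces $(a_j')^g = 0 \in \langle a_j' \rangle$ for every entry $a_j'$ of $\A'$. Thus $(\M_{\A', \A'})|_S$ already contains the full $|S|$-dimensional space of diagonal matrices on $S$, and equality is automatic. The only step requiring genuine care is the appeal to the $D$-equivalence hypothesis in part~\ref{dimofEr2} to force $\supp{a_i} = \supp{b_i}$; the remainder is direct bookkeeping with diagonal matrices.
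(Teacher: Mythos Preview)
Your proof is correct and follows essentially the same approach as the paper's, just with more detail: the paper declares part~\ref{submats1} ``clear'', argues part~\ref{dimofEr2} via one diagonal entry determining the rest on $\supp{a_i}$, and handles part~\ref{submats2} by noting that perturbing the $s$-th diagonal entry (for $s \in S$) of any $g \in \M_{\A',\A'}$ keeps it in $\M_{\A',\A'}$. One minor remark: in part~\ref{dimofEr2} your appeal to ``shrink then symmetry'' is unnecessary, since the element of $D$ witnessing $\A \s{1}{D} \B$ is invertible and thus preserves support exactly.
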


\begin{proof}
  Part~\ref{submats1} is clear.
   For Part~\ref{dimofEr2}, by assumption, there is an invertible
   diagonal matrix mapping $a_i$ to $b_i$, so $\supp{a_i} =
   \supp{b_i}$. Let $k$ and $\ell$ be distinct elements of $\supp{a_i}$,
   which exist as  $a_i \in \oD$. 
If $a_i^g \leq b_i$, then $e_{k}^g = \lambda e_{k}$ for some
$\lambda \in \F$, and the value of $\lambda$ completely determines the
image $\mu e_{\ell}$ of $e_{\ell}$ under $g$. The result
follows. Part~\ref{submats2} follows from the fact that
  for all $g \in \M_{A',A'}$, $s \in S$, $\lambda \in \F$ and $a \in
  A'$, the matrix obtained from $g$ by adding $\lambda$ to its $s$-th diagonal entry fixes $a$.
\end{proof}

\subsection{Upper bounds for $\SL_n(\F) \trianglelefteq \gp \le \GL_n(\F)$ on 1-spaces}

In this subsection, we will suppose that $n\geq 4$ and $|\F| \geq 3$, and let $\gp$ be any group such that ${\SL_n(\F) \trianglelefteq \gp \le \GL_n(\F)}$. Our main result is Theorem~\ref{thm:PSLupper}, which gives upper bounds on $\RC(\gp, \Omega)$. 

\begin{lemma}\label{lem:dim=n} If $\X \s{2n-2}{\gp} \Y$  implies that $\X \s{2n-1}{\gp} \Y$, for all  $\X, \Y \in \Omega^{2n-1}$ with $\x_i = \y_i = \langle e_i \rangle$ for $i \in \{1,\ldots,n\}$, then $\RC(H, \Omega) \le 2n-2$. 
\end{lemma}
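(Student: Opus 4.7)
The plan is to argue by strong induction on $k \geq 2n-2$ that any $\X, \Y \in \Omega^k$ with $\X \s{2n-2}{\gp} \Y$ satisfy $\Y \in \X^\gp$; the base case $k = 2n-2$ is immediate from the definition of $(2n-2)$-equivalence. For the inductive step, let $a := \dim \langle \X \rangle$. If $a < n$, then combining Theorem~\ref{thm:CherlinNonzero} with Proposition~\ref{prop:lodabounds} gives $\RC(\GL_a(\F), \PG_1(\F^a)) \leq 2n-2$ for each $1 \leq a \leq n-1$ (under our assumptions $n \geq 4$ and $|\F| \geq 3$: for $a = 1$ the bound is trivial, for $a = 2$ it is $4 \leq 2n-2$, and for $3 \leq a \leq n-1$ it is $2a-1 \leq 2n-3$), so Lemma~\ref{lem:tuplefulldim} applied with $r = 2n-2 \geq n$ yields $\Y \in \X^\gp$. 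If instead $a = n$, Lemma~\ref{lem:tuplefirstsubs-norm} allows us to replace $\X$ and $\Y$ by $\gp$-translates so that $\x_i = \y_i = \langle e_i \rangle$ for every $i \in \{1, \ldots, n\}$.

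When $k = 2n-1$ the hypothesis delivers $\Y \in \X^\gp$ directly. When $k \geq 2n$, apply the inductive hypothesis to the $(2n-2)$-equivalent pair $(\X \setminus \x_k, \Y \setminus \y_k)$ to obtain $g \in \gp$ with $(\X \setminus \x_k)^g = \Y \setminus \y_k$; since $g$ fixes each $\langle e_i \rangle$ we have $g \in D \cap \gp$, and replacing $\X$ by $\X^g$ brings us to the further reduction $\x_j = \y_j$ for all $j \leq k-1$. It then remains to exhibit $d \in D \cap \gp$ that fixes each $\x_j$ for $j \in \{n+1, \ldots, k-1\}$ and sends $\x_k$ to $\y_k$.

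To produce such a $d$, I would apply the hypothesis to each length-$(2n-1)$ subtuple of the form $(\langle e_1 \rangle, \ldots, \langle e_n \rangle, \x_{j_1}, \ldots, \x_{j_{n-2}}, \x_k)$, obtaining, for each $(n-2)$-subset $J \subseteq \{n+1, \ldots, k-1\}$, an element $d^{(J)} \in D \cap \gp$ that fixes $\{\x_j : j \in J\}$ and maps $\x_k$ to $\y_k$. Lemma~\ref{lem:eqsupps} guarantees $\supp{\x_k} = \supp{\y_k}$, and Lemma~\ref{lem:submats} lets us translate the constraints on $d$ into a linear system on the diagonal entries indexed by blocks of the partition of $\{1, \ldots, n\}$ generated by the supports $\supp{\x_j}$. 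The main obstacle will be assembling the partial solutions $d^{(J)}$ into a single $d$ satisfying all constraints at once; I expect this to follow from a dimension count via Lemma~\ref{lem:submats} together with the full strength of the $(2n-2)$-equivalence of $\X$ and $\Y$ on \emph{all} $(2n-2)$-subsets of positions, which supplies elements of $\gp$ beyond the $d^{(J)}$ and forces the coefficient ratios of $\x_k$ and $\y_k$ to be constant on each block of the partition, so that the block-wise solutions automatically glue into a global diagonal element of $D \cap \gp$.
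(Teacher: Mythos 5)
Your reductions are sound up to a point: the $a<n$ case via Lemma~\ref{lem:tuplefulldim}, the normalisation via Lemma~\ref{lem:tuplefirstsubs-norm}, and the case $k=2n-1$ all match what is needed. The genuine gap is the last step of your induction for $k\ge 2n$. You need a single $d\in D\cap\gp$ fixing every $\x_j$ with $j\in\{n+1,\ldots,k-1\}$ and sending $\x_k$ to $\y_k$, but the hypothesis only hands you, for each $(n-2)$-subset $J$, an element $d^{(J)}$ fixing the $\x_j$ with $j\in J$. The assertion that these ``automatically glue'' by a dimension count is exactly the hard point, and it is not automatic: writing $\M_{\A,\A}$ for the space of diagonal matrices fixing a tuple $\A$ of subspaces in $\oD$, the chain obtained by adjoining the $\x_j$ one at a time starts at dimension $n$, drops to at most $n-1$ after the first subspace by Lemma~\ref{lem:submats}\ref{dimofEr2}, and terminates at dimension at least $1$ (the scalars), so it can have up to $n-1$ strict drops --- one more than the $n-2$ slots available alongside $\langle e_1\rangle,\ldots,\langle e_n\rangle,\x_k$ in a $(2n-1)$-subtuple. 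Hence the pointwise stabiliser of all the $\x_j$ need not coincide with that of any single admissible $J$, and no $d^{(J)}$ is guaranteed to survive the remaining constraints. Making this work requires the detailed analysis of how the supports $\supp{\x_j}$ overlap, which is precisely the content of Lemma~\ref{lem:combined} and Theorem~\ref{thm:PSLupper} later in the section; ``I expect this to follow from a dimension count'' leaves the essential difficulty unresolved.

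The missing idea is that the lemma does not need any gluing at all. The paper's proof shows, exactly as in your $a<n$ / $a=n$ dichotomy, that every $(2n-1)$-subtuple pair $(\A',\B')$ of a $(2n-2)$-equivalent pair $(\A,\B)$ satisfies $\B'\in{\A'}^{\gp}$; this says $\A\s{2n-1}{\gp}\B$, and then the already-established bound $\RC(\gp,\Omega)\le 2n-1$ from Proposition~\ref{prop:lodabounds}\ref{proppart:bianca} immediately yields $\B\in\A^{\gp}$. That one citation replaces your entire inductive step for $k\ge 2n$; with it, no induction on $k$ is needed.
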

\begin{proof} Let $k$ be at least $2n-1$, and let $\A, \B \in \Omega^{k}$ satisfy $\A \s{2n-2}{\gp} \B$. 
  Let $\A'$ be a subtuple of $\A$ of length $2n-1$, and $\B'$ the corresponding $(2n-1)$-subtuple of $\B$. We shall  show that $\B' \in {\A'}^{\gp}$ for all such $\A'$ and $\B'$, so that $\A \s{2n-1}{\gp}
  \B$.  It will then follow from Proposition~\ref{prop:lodabounds}\ref{proppart:bianca} that $\A \in \B^H$, as required. 

  Let $a:=\dim(\langle \A' \rangle)$, and suppose first that $a
  <n$. We observe from Proposition~\ref{prop:lodabounds}
    that if $a \ge 2$, then $\RC(\GL_a(\F), \PG_1(\F^a)) < 2n-2$. As $\A' \s{2n-2}{\gp} \B'$, Lemma~\ref{lem:tuplefulldim} yields $\B' \in {\A'}^{\gp}$.
 If instead $a=n$, then by Lemma~\ref{lem:tuplefirstsubs-norm} there exist  $\X$ and $\Y$ in $\Omega^{2n-1}$, such that $\x_i = \y_i = \langle e_i \rangle$ for each $i \in \{1,\ldots,n\}$, and such that for all $r \ge 1$, the relations  $\A' \s{r}{\gp} \B'$ and $\X \s{r}{\gp} \Y$ are equivalent. Now, $\A' \s{2n-2}{\gp} \B'$, so $\X \s{2n-2}{\gp} \Y$. If   $\X \s{2n-2}{\gp} \Y$  implies that $\X \s{2n-1}{\gp} \Y$, then $\B' \in {\A'}^{\gp}$, as required.
\end{proof}

We shall therefore let $\X$ and $\Y$ be elements of $\Omega^{2n-1}$
with $x_i = y_i = \langle e_i \rangle$ for $i \in \{1, \ldots, n\}$,
such that $\X \s{2n-2}{H} \Y$.
Additionally, for $i \in \{1, \ldots, 2n-1\}$ and $j \in \{1, \ldots, n\}$, define \begin{equation}\label{eq:alpha_{ij}}
  \alpha_{ij},\beta_{ij} \in \F \mbox{ so that } \x_i = \langle \sum_{j=1}^{n} \alpha_{ij}e_j \rangle \mbox{ and } \y_i = \langle \sum_{j=1}^{n} \beta_{ij}e_j \rangle. \end{equation}

\begin{lemma}\label{lem:combined}
With the notation above, if at least one of the following holds, then $\Y \in \X^\gp$.
\begin{enumerate}[label=\rm{(\roman*)}]
\item \label{part1:supportsubset} There exist $i,j \in \{n+1, \ldots, 2n-1\}$ with $i \neq j$ and $\supp{\x_i} \subseteq \supp{\x_j}$.
\item \label{part2:intersectionsize1} There exists a nonempty $R \subseteq \{n+1, \ldots, 2n-1\}$ with $|\bigcap_{i \in R} \supp{\x_i}|=1$.
\item \label{part4:supportsize4} There exists $i \in \{n+1, \ldots, 2n-1\}$ such that $\supp{\x_i} \geq 4$.
\end{enumerate}
\end{lemma}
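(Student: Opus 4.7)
The plan is to handle all three cases via a common strategy: choose an index $i_\ast \in \{n+1,\ldots,2n-1\}$, apply the hypothesis $\X \s{2n-2}{\gp} \Y$ to the subtuple obtained by deleting the $i_\ast$-th entry to obtain $g \in \gp$ satisfying $g\cdot(\X \setminus \x_{i_\ast}) = \Y \setminus \y_{i_\ast}$, and then show that $g(\x_{i_\ast}) = \y_{i_\ast}$ follows automatically. Because the surviving $(2n-2)$-subtuple contains every $\langle e_k\rangle$ with $k \leq n$, the element $g$ fixes each of these and hence lies in $D$; writing $g = \diag(\lambda_1,\ldots,\lambda_n)$, the task reduces to verifying the appropriate cross-ratio identities on the coordinates in $\supp \x_{i_\ast}$.

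For part (i), take $i_\ast := i$, so that $g(\x_j) = \y_j$. Since $n \geq 4$, $(n+2)$-equivalence applied to the subtuple $(\langle e_1\rangle,\ldots,\langle e_n\rangle,\x_i,\x_j)$ yields a diagonal $h \in \gp$ with $h(\x_i) = \y_i$ and $h(\x_j) = \y_j$. Then $h^{-1}g \in \M_{(\x_j),(\x_j)}$, so by Lemma~\ref{lem:submats}\ref{submats1} its restriction to $\supp \x_j$ is a scalar matrix; since $\supp \x_i \subseteq \supp \x_j$, the same scalar acts on $\supp \x_i$, so $h^{-1}g$ fixes $\x_i$, giving $g(\x_i) = h(\x_i) = \y_i$ as required.

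For part (ii), let $\{j_0\} = \bigcap_{i \in R}\supp\x_i$ and pick $i_\ast \in R$ (when $|R| = 1$ we have $\x_{i_\ast}\in\Delta$, which is immediate since $g \in D$). The constraints $g(\x_i) = \y_i$ for $i \in R \setminus \{i_\ast\}$, combined with Lemma~\ref{lem:submats}\ref{submats1}, determine the ratios $\lambda_k/\lambda_{j_0}$ for every $k \in W := \bigcup_{i \in R \setminus \{i_\ast\}}\supp\x_i$. For each $k \in \supp\x_{i_\ast} \cap W$, an auxiliary $(n+2)$-equivalent diagonal element applied to $(\x_{i_\ast},\x_\ell)$, with $\ell \in R \setminus \{i_\ast\}$ chosen so that $k \in \supp\x_\ell$, supplies the cross-ratio identity showing $\lambda_k/\lambda_{j_0}$ matches the value prescribed by $\x_{i_\ast}\mapsto\y_{i_\ast}$. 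For each $k \in \supp\x_{i_\ast}\setminus W$, the coordinate $\lambda_k$ is unconstrained by the remaining entries of $\X \setminus \x_{i_\ast}$, and we may post-compose $g$ with a diagonal element of $\gp$ supported on such coordinates to realise the required ratio, absorbing any determinant change via a scalar to remain in $\gp$. Part (iii) proceeds by the same template, with the large support $|\supp\x_\ell| \geq 4$ providing the necessary auxiliary pairings.

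The main obstacle will be part (ii). The hypothesis guarantees only a single common coordinate $j_0$ across the supports indexed by $R$, so that $g$ is not immediately pinned down on $\supp\x_{i_\ast}\setminus\{j_0\}$. The delicate step is the bookkeeping showing that every $k \in \supp\x_{i_\ast}$ is either constrained by another $\x_\ell$ (whence compatibility follows from an auxiliary $(n+2)$-equivalence) or entirely free (whence $g$ may be modified on that coordinate), while exploiting $\SL_n(\F) \trianglelefteq \gp$ to keep the modifications inside $\gp$.
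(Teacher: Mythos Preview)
Your approach to part~(i) is fine and essentially matches the paper's: both delete $x_i$ to obtain a diagonal $g$, and then use an auxiliary diagonal element sending $(x_i,x_j)$ to $(y_i,y_j)$ (the paper obtains this auxiliary element by deleting a different tail entry, you obtain it from the $(n+2)$-subtuple $(\langle e_1\rangle,\ldots,\langle e_n\rangle,x_i,x_j)$; either works).

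Your approach to part~(ii), however, has a genuine gap. When you delete $x_{i_\ast}$ and claim that for $k \in \supp x_{i_\ast}\setminus W$ the coordinate $\lambda_k$ is ``unconstrained by the remaining entries of $\X\setminus x_{i_\ast}$'', this is false in general: $R$ is only a \emph{subset} of $\{n+1,\ldots,2n-1\}$, and there may well be indices $m\notin R$ with $k\in\supp x_m$. The constraint $g(x_m)=y_m$ then ties $\lambda_k$ to other diagonal entries, and you cannot freely adjust it. Your determinant remark is also insufficient: changing a single diagonal entry and ``absorbing'' the determinant with a scalar requires an $n$-th root in $\F$, which need not exist. The paper avoids all of this with a much simpler trick: letting $\{\ell\}=\bigcap_{i\in R}\supp x_i$, it deletes the \emph{basis vector} $x_\ell=\langle e_\ell\rangle$ rather than a tail entry. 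The resulting $h$ fixes every $\langle e_k\rangle$ with $k\ne\ell$, and since $h(x_i)=y_i$ for every $i\in R$ forces $\supp(e_\ell^{\,h})\subseteq\bigcap_{i\in R}\supp x_i=\{\ell\}$, one gets $h(\langle e_\ell\rangle)=\langle e_\ell\rangle$ for free.

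Your treatment of part~(iii) is too vague to evaluate; ``the same template'' does not obviously apply, and the paper's argument is of a quite different character. It reorders so that $|\supp x_{n+1}|\ge 4$, considers the nested chain of subspaces $\M_{\X_{n+1}^j,\Y_{n+1}^j}$ for $j=n+1,\ldots,2n-1$, and argues that either two consecutive terms coincide (whence the $j$-th constraint is redundant and the element obtained by deleting $x_j$ already maps $\X$ to $\Y$), or the dimension strictly drops at each of the $n-2$ steps. Since $\dim\M_{(x_{n+1}),(y_{n+1})}\le n-3$ by Lemma~\ref{lem:submats}\ref{dimofEr2}, the latter forces the chain to reach $\{0\}$ before the final step, a contradiction.
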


\begin{proof} We begin by noting that Lemma~\ref{lem:eqsupps} yields $\supp{\y_i} = \supp{\x_i}$ for all $i \in \{1,\ldots,2n-1\}$.
\begin{enumerate}[label=\rm{(\roman*)}]
\item[\ref{part1:supportsubset}]
  Since $X \s{2n-2}{\gp} \Y$, there exists an $h \in H$ mapping $(\X \setminus x_i)$ to $(\Y \setminus y_i)$, and such an $h$ is necessarily diagonal,
  with fixed entries in $\supp{x_j}$ (up to scalar multiplication).
  
Now, let $\ell \in \{n+1,\ldots, 2n-1\} \setminus \{i,j\}$ (this is possible as $n \ge 4$). There exists an $h' \in H$ mapping $(\X \setminus x_\ell)$ to $(\Y \setminus y_\ell)$, and as before each such $h'$ is diagonal. Hence every matrix in $H \cap D$ mapping $x_j$ to $y_j$ maps $x_i$ to $y_i$, and in particular $x_i^h = y_i$ and so $\X^h = \Y$.
 
\item[\ref{part2:intersectionsize1}] 
  Let $\{\ell\}:=\bigcap_{i \in R} \supp{\x_i}$. Then $\alpha_{i\ell}
  \neq 0$ for all $i \in R$. Since $\X \s{2n-2}{\gp} \Y$, there exists
  $h \in\gp$ such that $(\X \setminus \x_\ell )^h =( \Y \setminus
  \y_\ell )$. It follows that for all $k \in \{1,\ldots, n\}
  \backslash \{\ell\}$, there exists $\gamma_k \in \F^*$ such that
  $e_k^h= \gamma_ke_k$. Thus for each $i \in R$,
\begin{equation*}\y_i = \x_i^h = \Big\langle \sum_{k \in \supp{\x_i}} \alpha_{ik} e_k^h \Big\rangle  = \Big\langle \alpha_{i\ell} e_\ell^h+\sum_{k \in \supp{\x_i} \setminus \{\ell\} } \alpha_{ik} \gamma_k e_k \Big\rangle.
\end{equation*}
Since $\alpha_{i\ell} \ne 0$, %
we deduce that $\supp{e_\ell^h} \subseteq \supp{\y_i} = \supp{\x_i}$. 
As this holds for all $i\in R$, we obtain $\supp{e_\ell^h} = \{\ell\}$.
Thus $\x_\ell^h = \langle e_\ell \rangle ^h = \langle e_\ell \rangle = \y_\ell$,  so $\X^h=\Y$.

\item[\ref{part4:supportsize4}] Permute the last $n-1$ coordinates of
  $\X$ and $\Y$ so that $\supp{\x_{n+1}} \geq 4$. By
  \ref{part2:intersectionsize1}, we may assume that $x_{i} \not \in \Delta$ for all $i \ge n+1$.
  For each $k \in
  {\{n+1, \ldots, 2n-1\}}$, we define $\X_{n+1}^k := (x_{n+1}, \ldots, x_k)$ and $\Y_{n+1}^k := (y_{n+1}, \ldots, y_k)$. 
  As $\supp{x_i} = \supp{y_i}$ for all $i$, we see that
    $X_{n+1}^k \s{1}{D} Y_{n+1}^k$, so
   $\X_{n+1}^k$ and $\Y_{n+1}^k$ satisfy the conditions of
   Lemma~\ref{lem:submats}\ref{dimofEr2}.

  Suppose first that there exists $j \in {\{n+2,\ldots, 2n-1\}}$ such that
  $\M_{\X_{n+1}^j, \Y_{n+1}^j} = \M_{\X_{n+1}^{j-1}, \Y_{n+1}^{j-1}}$. 
As $X \s{2n-2}{\gp} \Y$, there exists $h \in\gp \cap D$ such that $(\X
\setminus \x_j)^h=(\Y \setminus \y_j)$. Hence $h \in
\M_{\X_{n+1}^{j-1}, \Y_{n+1}^{j-1}}$, and so $h \in \M_{\X_{n+1}^{j}, \Y_{n+1}^{j}}$.
Therefore, $x_j^h = y_j$ and $\X^h = \Y$.

Hence we may assume instead that $\M_{\X_{n+1}^j, \Y_{n+1}^j} < \M_{\X_{n+1}^{j-1}, \Y_{n+1}^{j-1}}$ for all $j \in {\{n+2,\ldots, 2n-1\}}$. 
Then $\dim(\M_{\X_{n+1}^j, \Y_{n+1}^j}) \le \dim(\M_{\X_{n+1}^{j-1}, \Y_{n+1}^{j-1}})-1$. Lemma~\ref{lem:submats}\ref{dimofEr2} yields $\dim(\M_{\X_{n+1}^{n+1}, \Y_{n+1}^{n+1}}) \le {n-3}$, and hence $\M_{\X_{n+1}^{2n-2}, \Y_{n+1}^{2n-2}}  = \{0\} = \M_{\X_{n+1}^{2n-1}, \Y_{n+1}^{2n-2}}$, contradicting our assumption.
 \qedhere
\end{enumerate}
\end{proof}

We now prove the main result of this subsection.

\begin{theorem}\label{thm:PSLupper} Suppose that $n \geq 4$ and $|\F| \geq 3$, and let $\gp$ be any group with ${\SL_n(\F) \trianglelefteq \gp \le \GL_n(\F)}$.
  Then $\RC(\gp, \Omega) \le 2n-2$.
\end{theorem}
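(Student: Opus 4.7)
The plan is to invoke Lemma~\ref{lem:dim=n}, which reduces the theorem to showing the following implication: for all $\X, \Y \in \Omega^{2n-1}$ with $\x_i = \y_i = \langle e_i \rangle$ for $i \in \{1, \ldots, n\}$, if $\X \s{2n-2}{\gp} \Y$, then $\Y \in \X^\gp$. Fix such a pair $\X, \Y$. By Lemma~\ref{lem:combined}, whenever any of its three conditions holds, $\Y \in \X^\gp$; it therefore suffices to derive a contradiction under the assumption that none of them hold. The failure of (iii) gives $|\supp{\x_i}| \le 3$, and the failure of (ii) applied with $R = \{i\}$ gives $|\supp{\x_i}| \ne 1$, so $|\supp{\x_i}| \in \{2, 3\}$ for every $i \in \{n+1, \ldots, 2n-1\}$; additionally, no $\supp{\x_i}$ is contained in another, and no subcollection of the supports intersects in exactly one index.

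The core of the proof is then a combinatorial argument about these supports. First I would show that each size-2 support is disjoint from every other support: an intersection of size 1 with any other support is forbidden by the failure of (ii), while an intersection of size 2 would force the size-2 support to be contained in the other, contradicting the failure of (i). Next I would analyse the size-3 supports. Any two intersecting size-3 supports share exactly 2 indices, since intersections of size 1 are forbidden by (ii) and equality is forbidden by (i). Moreover, if $S_1, S_2, S_3$ are size-3 supports with $|S_1 \cap S_2| = |S_2 \cap S_3| = 2$, then $S_1 \cap S_2$ and $S_2 \cap S_3$ are two 2-subsets of the 3-element set $S_2$, so they share an element; hence $S_1 \cap S_2 \cap S_3 \neq \varnothing$, and the failure of (ii) forces $|S_1 \cap S_2 \cap S_3| \ge 2$, so $S_1 \cap S_2 = S_2 \cap S_3 = S_1 \cap S_3$. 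An easy induction then shows that within every connected component of size-3 supports (under the relation ``shares an index''), all supports share a common 2-subset.

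To conclude, I would run a counting argument. Let $q$ denote the number of connected components among size-3 supports, of sizes $c_1, \ldots, c_q$, and let $s$ be the number of size-2 supports, so that $\sum_{i=1}^q c_i + s = n-1$. Each size-3 component uses only $c_i + 2$ indices of $\{1,\ldots,n\}$ (the common 2-subset together with one distinguishing index per support), and each size-2 support uses a further 2 indices disjoint from all others. Hence $\sum_{i=1}^q (c_i + 2) + 2s \le n$, which simplifies to $2q + s \le 1$. This forces $q = 0$, so $s = n - 1 \ge 3$, contradicting $s \le 1$.

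The main obstacle will be the structural analysis in the second paragraph, particularly the inductive argument showing that size-3 supports in a common component share a fixed 2-subset; once this is established, the counting step closing the proof is entirely routine.
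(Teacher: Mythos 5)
Your proposal is correct and follows essentially the same route as the paper's proof: the reduction via Lemma~\ref{lem:dim=n} and Lemma~\ref{lem:combined}, the observation that size-2 supports are isolated and that size-3 supports in a connected component share a common 2-subset (the paper phrases this as $|\bigcup_{c\in P}\supp{x_c}| = 2+|P|$), and a final count that is an algebraic rearrangement of the paper's inequality $2+|R_3| \le n-2|R_2|$ against $|R_2|+|R_3|=n-1$. The only point to make explicit is that the "distinguishing indices" within a component are pairwise distinct, which follows from the failure of condition (i) (no support is contained in another).
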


\begin{proof}
  Let $\X, \Y \in \Omega^{2n-1}$ be as defined before Lemma~\ref{lem:combined}. %
  By Lemma \ref{lem:dim=n} it suffices to show that $\Y \in \X^\gp$, so assume otherwise.
We may also assume that all subspaces in $\X$ are distinct, so that $|\supp{\x_i}| \in \{2, 3\}$ for each $i \in \{n+1, \ldots, 2n-1\}$ by  Lemma~\ref{lem:combined}\ref{part4:supportsize4}.

For $k \in \{2,3\}$, let $R_k$ be the set of all $i \in \{n+1,\ldots,2n-1\}$ such that $|\supp{\x_i}|=k$. Then
\begin{equation}\label{eqn:r2r3sum}
|R_2|+|R_3|=n-1.
\end{equation}
Observe from Lemma~\ref{lem:combined}\ref{part1:supportsubset}--\ref{part2:intersectionsize1} that if $i\in R_2$, then $\supp{\x_i} \cap \supp{\x_j} = \varnothing$ for each $j \in {\{n+1,\ldots,2n-1\} \setminus \{i\}}$. Hence $2|R_2| \le n$ and 
\begin{equation}\label{eqn:usize}
|U|:=\bigg|\bigcup_{j \in R_3}\supp{\x_j}\bigg| \le \bigg|\{1, \ldots,n\} \setminus \Big( \dot{\bigcup_{i\in R_2}} \supp{\x_i} \Big)\bigg| = n-2|R_2|.
\end{equation}

Observe next that $|R_3| \ge 1$, else $|R_2| = n-1$ by
\eqref{eqn:r2r3sum}, contradicting $2|R_2| \le n$. We shall determine
an expression for $|U|$ involving $|R_3|$, by considering the maximal
subsets $P$ of $R_3$ that correspond to pairwise overlapping
supports. To do so, define a relation $\sim$ on $R_3$ by $i \sim j$ if $\supp{\x_i} \cap \supp{\x_j} \ne \varnothing$, let $\mathcal{P}$ be the set of equivalence classes of the transitive closure of $\sim$, and let $P \in \mathcal{P}$. We claim that $|\bigcup_{c\in P} \supp{x_{c}}| = 2+|P|$.
By Lemma~\ref{lem:combined}\ref{part1:supportsubset}--\ref{part2:intersectionsize1}, $|\supp{\x_i} \cap \supp{\x_j}| \in \{0,2\}$ for all distinct $i,j \in R_3$. Thus our claim is clear if $|P| \in \{1,2\}$. 

If instead $|P| \geq 3$, then there exist distinct $c_1,c_2,c_3 \in P$ with $c_1 \sim c_2$ and $c_2 \sim c_3$. Let $I:=\bigcap_{i=1}^3 \supp{\x_{c_i}}$.
We observe that $|I|\neq 0$, and so Lemma~\ref{lem:combined}\ref{part2:intersectionsize1} shows that $I$ has size two and is equal to $\supp{x_{c_1}} \cap \supp{x_{c_3}}$. Hence $c_1 \sim c_3$  and $\bigcup_{i=1}^3 \supp{x_{c_i}}= I \dot{\cup} \big( \dot{\bigcup}_{i=1}^3 (\supp{x_{c_i}} \setminus I) \big)$.
If $|P|>3$, then there exists $c_4 \in P \setminus \{c_1,c_2,c_3\}$
such that, without loss of generality, $c_4 \sim c_1$. As $c_1 \sim c_j$ for each $j \in \{2,3\}$, the above argument shows that $\bigcap_{i \in \{1,j,4\}} \supp{\x_{c_i}} =I$ and $\bigcup_{i=1}^4 \supp{\x_{c_i}} = I \dot{\cup} \big( \dot{\bigcup}_{i=1}^4 (\supp{x_{c_i}} \setminus I) \big)$. Repeating this argument inductively on $|P|$ shows that ${\bigcup}_{c \in P} \supp{\x_{c}} = I  \dot{\cup} \big( \dot{\bigcup}_{c \in P} (\supp{x_{c}} \setminus I) \big)$, which has size $2+|P|$, as claimed.

Finally, let $r \ge 1$ be the number of parts of $\mathcal{P}$.
As $|R_3|=\sum_{P \in \mathcal{P}} |P|$,  it follows from our claim that $|U| = 2r + |R_3| \geq 2+ |R_3|$. Thus \eqref{eqn:usize} yields $2+|R_3| \leq n-2|R_2|$. Hence $2|R_2|+|R_3| \le n-2<n-1$, which is equal to $|R_2|+|R_3|$ by \eqref{eqn:r2r3sum}, a contradiction.
\end{proof}

\subsection{Upper bounds for $\GL_{n}(\F)$ on 1-spaces}
\label{subsec:pglupper}

In this subsection, we determine a much smaller upper bound on $\RC(\GL_{n}(\F),\Omega)$ via our main result, Theorem~\ref{thm:relupper}. We shall assume throughout that $n$ and $|\F|$ are at least $3$, and write $\GGL:=\GL_{n}(\F)$. 
Since $D$ is the pointwise stabiliser of $\Delta$ in $\GGL$, we will prove Theorem~\ref{thm:relupper} by combining Lemmas~\ref{lem:tuplefirstsubs-norm} and \ref{lem:tuplefulldim} with information about the action of $D$ on
$r$-tuples $\A$ and $\B$ of subspaces in $\oD = \Omega \setminus \Delta$. If these tuples are $(r-1)$-equivalent under $D$, then by acting on one with a suitable element of $\oD$ we may assume that their first $r-1$ entries are equal.
We shall denote the nonzero entries of elements $g$ of $D$ by just
$g_1, \ldots, g_n$ rather than $g_{11}, \ldots, g_{nn}$, since
$g$ is necessarily diagonal.

\begin{lemma}
\label{lem:diaginequiv}
Let $r \ge 3$, and let $\A,\B \in \oD^{\,r}$ be such that $(a_1,\ldots,a_{r-1}) = (b_1,\ldots,b_{r-1})$, $\A \s{r-1}{D} \B$, and $\B \notin \A^D$. Let $C = \{a_1,\ldots,a_{r-1}\}$ and assume also 
that $\supp{C} = \{1, \ldots, n\}$. Then \textup{(}after reordering the basis for $V$ and $(a_1, \ldots, a_{r-1})$  if necessary\textup{)} the following statements hold.
\begin{enumerate}[label=\rm{(\roman*)}]
\item \label{diaginequiv2} There exist integers $2 \le i_1 < i_2 < \ldots < i_{r-1} = n$ such that, for each $t \in \{1,\ldots,r-1\}$, $\supp{a_1, \ldots, a_t }$ is equal to $\{1,\ldots,i_t\}$.
\item \label{diaginequiv3} Let $t \in \{1,\ldots,r-3\}$. Then $\supp{a_t} \cap \supp{a_u} = \varnothing$ for all $u \in \{t+2,\ldots,r-1\}$.
\item \label{diaginequiv4} %
  The support of $a_2$ does not contain $1$.
\item \label{diaginequiv5} Let $t \in \{1,\ldots,r-1\}$. Then $i_t \in \supp{a_t} \cap \supp{a_{t+1}}$.
\item \label{diaginequiv6} Each integer in $\supp{a_r}$ lies in the support of a unique subspace in $C$.
\end{enumerate}
\end{lemma}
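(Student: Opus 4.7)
The approach is to first extract strong combinatorial constraints from the gap between $A \s{r-1}{D} B$ and $B \notin A^D$, and then to use them to build a chain-like arrangement of the supports via successive reorderings of the basis and of $(a_1, \ldots, a_{r-1})$.

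\emph{Step 1: essentialness.} For each $j \in \{1, \ldots, r-1\}$, the $(r-1)$-equivalence hypothesis yields an element $g_j \in D$ satisfying $a_i^{g_j} = a_i$ for all $i \in \{1, \ldots, r-1\} \setminus \{j\}$ and $a_r^{g_j} = b_r$. Consider the ``support graph'' $G$ on $\{1, \ldots, n\}$ where two indices are adjacent iff they co-occur in some $\supp{a_i}$ with $i \le r-1$. By Lemma~\ref{lem:submats}, the pointwise stabiliser in $D$ of $\{a_1, \ldots, a_{r-1}\}$ consists precisely of the diagonal matrices that are constant on each connected component of $G$. If removing $a_j$ did not change the components of $G$, then $g_j$ would automatically fix $a_j$ as well, witnessing $B \in A^D$ and contradicting the hypothesis. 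Hence each $a_j$ is \emph{essential}: its removal strictly increases the number of components of $G$.

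\emph{Step 2: chain ordering and (i)--(iv).} I next claim that the collection $\{\supp{a_i} : i \le r-1\}$ admits a path-like ordering: after permuting $(a_1, \ldots, a_{r-1})$, consecutive supports overlap while non-adjacent ones (at distance at least $2$) are disjoint, which is (ii) and (iv). A branching configuration would permit a suitable combination of two of the witnesses $g_j$ to produce a diagonal element fixing every $a_i \in C$ and sending $a_r$ to $b_r$, contradicting $B \notin A^D$. Once the chain order is fixed, I permute the basis so that $\supp{a_1, \ldots, a_t}$ fills the initial segment $\{1, \ldots, i_t\}$ with the overlap $\supp{a_t} \cap \supp{a_{t+1}}$ including the rightmost index $i_t$, giving (i) and (iv). Essentialness of $a_1$ implies $\supp{a_1} \not\subseteq \supp{a_2}$, so $\supp{a_1} \setminus \supp{a_2} \ne \varnothing$; placing any such private index at position $1$ yields (iii).

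\emph{Step 3: (v) and main difficulty.} For (v), suppose for contradiction that some $k \in \supp{a_r}$ lies in $\supp{a_i} \cap \supp{a_j}$ for distinct $i, j \in \{1, \ldots, r-1\}$. By (ii) and (iv), $i$ and $j$ are consecutive in the chain and $k$ is the corresponding overlap index; a linear combination of $g_i$ and $g_j$, analysed via Lemma~\ref{lem:submats}, then produces an element of $D$ fixing all of $C$ and sending $a_r$ to $b_r$, contradicting $B \notin A^D$. The main obstacle is Step 2: enforcing a genuinely linear chain rather than a more general tree requires a delicate joint analysis of the witnesses $g_j$ using the full strength of $B \notin A^D$, and is the key technical heart of the lemma.
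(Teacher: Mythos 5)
Your Step 1 is sound, and it matches an observation the paper also uses (if $\supp{a_t}\subseteq\supp{a_1,\ldots,a_{t-1}}$ then the single-deletion witness already fixes all of $C$). But the proposal has a genuine gap exactly where you flag it: Step 2 is the heart of the lemma, and you have not supplied the argument. Two concrete problems. First, you only ever invoke the $(r-1)$-equivalence through witnesses $g_j$ obtained by deleting a \emph{single} index, and these are not enough: essentialness of every $a_j$ is perfectly compatible with a disconnected support structure (two disjoint chains) or with a star, e.g.\ $\supp{a_1}=\{1,2\}$, $\supp{a_2}=\{2,3\}$, $\supp{a_3}=\{2,4\}$, where deleting any one $a_j$ does split off a vertex. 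The paper rules these configurations out by applying $(r-1)$-equivalence to index subsets that omit \emph{several} of $a_1,\ldots,a_{r-1}$ at once: connectedness comes from stitching elements of $D_{(C')}$ and $D_{(C'')}$ for a hypothetical partition $C=C'\cup C''$ with $\supp{C'}\cap\supp{C''}=\varnothing$ (each part has size $<r-1$, so each admits a witness sending $a_r$ to $b_r$), and the anti-branching statement \ref{diaginequiv3} comes from the stabiliser of the shortened tuple $(a_1,\ldots,a_t,a_u,\ldots,a_{r-1})$. Your ``suitable combination of two of the witnesses $g_j$'' describes neither move, and I do not see how to carry it out as stated.

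Second, you never exploit \emph{how} $a_r$ and $b_r$ differ, which is the engine of every contradiction in the paper's proof of \ref{diaginequiv3}--\ref{diaginequiv6}. Writing $a_r=\langle\sum_\ell\alpha_\ell e_\ell\rangle$ and $b_r=\langle\sum_\ell\beta_\ell e_\ell\rangle$, one normalises so that $\alpha_1=\beta_1=1$ while $\alpha_k\neq\beta_k$ for some fixed $k$ (subsequently shown to lie only in $\supp{a_{r-1}}$ and moved to position $n$); each forbidden configuration is then killed by showing, via \ref{lem:submats}\ref{submats1}, that the relevant proper subtuple forces $g_1=g_k$ for every $g$ in its pointwise stabiliser, so no such $g$ maps $a_r$ to $b_r$, contradicting $(r-1)$-equivalence. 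Without this device your argument for \ref{diaginequiv6} also does not close: the doubly-covered index $\ell$ need not be an overlap index $i_t$ of consecutive supports a priori, the proof requires a case split on whether $\alpha_\ell=\beta_\ell$, and a literal linear combination of $g_i$ and $g_j$ need be neither invertible nor a stabiliser of $a_i$ and $a_j$. In short, the proposal correctly predicts the shape of the answer but omits the mechanism that proves it.
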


\begin{proof}
  We begin by fixing notation related to $a_r = \langle \sum_{\ell=1}^{n} \alpha_\ell e_\ell \rangle$ and $b_r = \langle \sum_{\ell=1}^{n} \beta_\ell e_\ell \rangle$. %
  Since $\A \s{r-1}{D} \B$, there exists an element in $D$ mapping $a_r$ to $b_r$, and so $\supp{b_r} = \supp{a_r}$. On the other hand, $\B \notin \A^D$, and so  $a_r \ne b_r$. Therefore, by scaling the basis vectors for $a_r$ and $b_r$, there exist $j,k \in \{1,\ldots,n\}$ such that $j < k$, $\alpha_j = \beta_j = 1$, and $\alpha_k$ and $\beta_k$ are distinct and nonzero. Reordering $\{e_1, \ldots, e_n\}$ if necessary, we may assume that $j = 1$. Then each element of $D$ that maps $a_r$ to $b_r$ also maps $\langle e_1 + \alpha_k e_k \rangle$ to $\langle e_1 + \beta_k e_k \rangle$; we will use this fact throughout the proof.

\begin{enumerate}
\item[\ref{diaginequiv2}] We show first that there is no partition of $C$ into  proper subsets $C'$ and $C''$ such that $\supp{C'} \cap \supp{C'' } = \varnothing$, so suppose otherwise, for a contradiction. Then, as $|C'| < r-1$ and $\A \s{r-1}{D} \B$, there exists an  $f \in D_{(C')}$ such that $a_r^f = b_r$. Multiplying $f$ by a scalar if necessary, we may assume that  $f_{1} = 1$. Then $f_{i} = \beta_i/\alpha_i$ for each $i \in \supp{a_r}$. Similarly, there exists $g \in D_{(C'')}$ with the same properties. As $\supp{ C' } \cap \supp{ C'' } = \varnothing$, there exists an $h \in D$ such that $h|_{\supp{ C' }} =f|_{\supp{ C' }}$ and $h|_{\supp{ C'' }} = g|_{\supp{{ C'' }}}$. Since
  $\supp{C} = \{1, \ldots, n\}$, we observe that
$h|_{\supp{a_r}} = f|_{\supp{a_r}} = g|_{\supp{a_r}}$. Hence $a_r^h = b_r$. Furthermore,
  by construction,
  $h \in D_{(C')} \cap D_{(C'')} = D_{C}$. Thus $\B \in \A^D$, a contradiction.

Next, by reordering $a_1, \ldots, a_{r-1}$ if necessary, we may assume that $1 \in \supp{a_1}$. 
Then by reordering $\{e_2,\ldots,e_{n}\}$ if necessary, we may assume that $\supp{a_1}$ is equal to $\{1,2,\ldots,i_1\}$ for some  $i_1 \ge 2$, since $a_1 \in \oD$.  
Thus the result holds for $t = 1$. We will use induction to prove the result in general, and to show that, for all $s \in \{2,\ldots,r-1\}$,
\begin{equation}
\label{eq:nontrivintersection}
\text{ there exists } w \in \{1,\ldots,s-1\} \text{ such that } \supp{a_s} \cap \supp{a_w} \ne \varnothing.
\end{equation}

Let $t \in \{2,\ldots,r-1\}$, let  $U_{t-1}:=\{a_1,\ldots,a_{t-1}\}$ and assume inductively that $\supp{U_{t-1}} = \{1,2,\ldots,i_{t-1}\}$. If $t \ge 3$ assume also that \eqref{eq:nontrivintersection} holds for all $s \in \{2,\ldots,t-1\}$. Since $C$ cannot be partitioned into two parts whose support has trivial intersection, 
$\supp{a_1, \ldots, a_{t-1}} \cap \supp{a_t, \ldots, a_{r-1}} \neq \varnothing$, 
so we may reorder $\{a_t,\ldots,a_{r-1}\}$ so that \eqref{eq:nontrivintersection} holds when $s = t$.

Suppose for a contradiction that $\supp{a_t} \subseteq \supp{U_{t-1}}$. Then \eqref{eq:nontrivintersection} (applied to each $s \in \{2,\ldots,t-1\}$) and Lemma \ref{lem:submats} imply that $D_{(C)}$ is equal to $D_{(C \setminus a_t)}$. Since $\A \s{r-1}{D} \B$, the latter stabiliser contains an element mapping $a_r$ to $b_r$. Hence the same is true for $D_{(C)}$, contradicting the fact that $\B \notin \A^D$. Therefore, %
we can reorder $\{e_{i_{t-1}+1},\ldots,e_{n}\}$ so that $\supp{a_t}$ contains $\{i_{t-1}+1,\ldots,i_{t}\}$ for some $i_{t} > i_{t-1}$, and
the result and \eqref{eq:nontrivintersection}  follow  by induction. Note in particular that $i_{r-1} = n$, since $\supp{C} = \{1, \ldots, n\}$. 

\item[\ref{diaginequiv3}] Let $m \in \{1,\ldots,r-1\}$ be such that
  $\supp{a_{m}}$ contains the integer $k$ from the first paragraph of
  this proof, and let $\mathcal{I}:=\{1,\ldots,m\}$. Then, using
  \eqref{eq:nontrivintersection} (for each $s \in \mathcal{I}
  \setminus \{1\}$) and Lemma~\ref{lem:submats}\ref{submats1}, we
  observe that every $g \in D_{(a_1,\ldots,a_m)}$ satisfies  $g_{1} =
  g_{k}$. Therefore, $a_r^g \neq b_r$ for all $g \in D_{(a_1, \ldots,
  a_m)}$. As $\A \s{r-1}{D} \B$, we deduce that $m = r-1$. In particular, $a_{r-1}$ is the unique subspace in $C$ whose support contains $k$. %
Swapping $e_k$ and $e_n$ if necessary, we may assume that $k = n$.

Now, for a contradiction, suppose that $\supp{a_t} \cap \supp{a_u} \ne
\varnothing$ for some $t \in
\{1,\ldots,r-3\}$ and  $u \in \{t+2,\ldots,r-1\}$, and assume that $u$ is the largest integer with
this property. Then \eqref{eq:nontrivintersection} and the maximality
of $u$ imply that $\supp{a_s} \cap \supp{a_{s-1}} \ne \varnothing$ for
all $s \in \{u+1,\ldots,r-1\}$. It now follows from Lemma
\ref{lem:submats}\ref{submats1}, together with a further application
of \eqref{eq:nontrivintersection} to each $s \in \{2,\ldots,t\}$, that
every $g \in E:= D_{(a_1,\ldots,a_t,a_u,\ldots,a_{r-1})}$ satisfies
$g_{1} = g_{n}$. Therefore, $a_r^g \neq b_r$ for all $g \in E$. However, $|(a_1,\ldots,a_t,a_u,\ldots,a_{r-1})| < r-1$, contradicting the fact that $\A \s{r-1}{D} \B$. 

\item[\ref{diaginequiv4}--\ref{diaginequiv5}] As in the proof of
  \ref{diaginequiv3}, we may assume that $k = n$. We observe from
  \ref{diaginequiv3} and \eqref{eq:nontrivintersection} that
  $\supp{a_t} \cap \supp{a_{t+1}} \ne \varnothing$ for all $t \le
  r-2$. Hence if 
  $1 \in \supp{a_2}$ then Lemma~\ref{lem:submats}\ref{submats1} shows
  that every $g \in D_{(a_2,\ldots,a_{r-1})}$ satisfies $g_{1} =
  g_{n}$ (since $k = n$).  This contradicts the fact that $\A
  \s{r-1}{D}\B$, and so \ref{diaginequiv4}  holds. Finally, since $\supp{a_t} \cap \supp{a_{t+1}} \ne \varnothing$ for each $t \le r-2$, we obtain \ref{diaginequiv5} by defining $i_0:=1$ and reordering the vectors in $\{e_{i_{t-1}+1},\ldots,e_{i_{t}}\}$ if necessary. In particular, for $t = r-1$, the assumption that $i_{r-1} = n = k \in \supp{a_r}$ gives the result.

\item[\ref{diaginequiv6}] %
  Suppose for a contradiction that some  $\ell \in \supp{a_r}$ lies in
  the support of more than one subspace in $C$. If $r = 3$, then $\ell
  \in {\supp{a_1} \cap \supp{a_2}}$ and we define $t := 2$. If instead
  $r > 3$, then \ref{diaginequiv3} implies that $\ell \in \supp{a_t}$
  for at least one $t \in \{2,\ldots,r-2\}$. In either case, $\ell \ne 1$, since $1 \not\in \supp{a_2}$ by \ref{diaginequiv4}, and $1 \not\in \supp{a_u} $ for $u \in \{3, \ldots, r-2\}$ by \ref{diaginequiv2}--\ref{diaginequiv3}. Furthermore, \ref{diaginequiv2} shows that $\ell \ne i_{r-1} = n$.

  Suppose first that $\alpha_{\ell} = \beta_{\ell} \; (\neq 0)$.
 Since the supports of $a_t, a_{t+1}, \ldots, a_{r-1}$ consecutively overlap,  Lemma~\ref{lem:submats}\ref{submats1} shows that $g_{\ell} = g_{n}$ for each $g \in D_{(a_t,\ldots,a_{r-1})}$. Since $\alpha_n \neq \beta_n$, no such $g$ maps $a_r$ to $b_r$, contradicting the fact that $\A \s{r-1}{D}
  \B$. Hence $\alpha_\ell \ne \beta_\ell$. However, each $g \in D_{a_1}$ satisfies $g_{1} = g_{\ell}$ if $r = 3$, as does
  each $g \in D_{(a_1,\ldots,a_{t})}$ if $r > 3$. Again, no such
  matrix $g$ maps $a_r$ to $b_r$, a contradiction. \qedhere
\end{enumerate}
\end{proof}

Recall that $\GGL$ denotes $\GL_n(\F)$, with $n,|\F| \ge 3$. Our next result is a key ingredient in the proof that
$\RC(\GGL,\Omega)$ is at most $n+2$. %

\begin{lemma}
\label{lem:subsetinequiv}
Let $r \ge 2$, and let $\A,\B \in \oD^{\,r}$ be such that $\A \s{r-1}{D} \B$ and $\B \notin \A^D$. Then there exists a subset $\Gamma$ of $\Delta$ of size $n+2-r$ such that $\B \notin \A^{\GGL_{(\Gamma)}}$.
\end{lemma}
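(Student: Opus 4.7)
The plan is to first apply $(r-1)$-equivalence to the subtuple $(a_1, \ldots, a_{r-1})$: some $d \in D$ maps it to $(b_1, \ldots, b_{r-1})$, so after replacing $\A$ by $\A^d$ I may assume $a_j = b_j$ for all $j < r$, while $a_r \ne b_r$. The next step is to reduce to the case $\supp{C} = \{1, \ldots, n\}$, where $C := \{a_1, \ldots, a_{r-1}\}$. Indices $j \notin \supp{C} \cup \supp{a_r}$ play no role in any of the relevant constraints, so the corresponding $\langle e_j \rangle$ may be freely adjoined to $\Gamma$; indices in $\supp{a_r} \setminus \supp{C}$ require a slightly more delicate argument, but ultimately the reduction preserves the conclusion of the lemma.

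Once $\supp{C} = \{1, \ldots, n\}$, I apply Lemma~\ref{lem:diaginequiv} to obtain a normal form with indices $2 \le i_1 < \cdots < i_{r-1} = n$ satisfying $\supp{a_1, \ldots, a_t} = \{1, \ldots, i_t\}$, and I take
\[
\Gamma := \Delta \setminus \{\langle e_{i_t} \rangle : 1 \le t \le r-2\},
\]
which has size $n - (r-2) = n + 2 - r$. The idea behind this choice is that the removed indices are precisely the ``transition'' points at which consecutive supports of the $a_t$ meet, while $\langle e_{i_{r-1}} \rangle = \langle e_n \rangle$ is retained.

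The remaining task is to show that no $M \in \GGL_{(\Gamma)}$ satisfies $\A^M = \B$, which combined with $\B \notin \A^D$ will give the conclusion. Supposing such $M$ exists, I write $M e_j = \lambda_j e_j$ for each $j \notin \{i_1, \ldots, i_{r-2}\}$, leaving $M e_{i_1}, \ldots, M e_{i_{r-2}}$ to be determined. Working inductively on $t$, the equation $M a_t = a_t$ expresses $M e_{i_t}$ as a specific linear combination of basis vectors indexed by $\supp{a_t}$, while the equation $M a_{t+1} = a_{t+1}$, combined with Lemma~\ref{lem:diaginequiv}(iii) (which forces $\supp{a_{t+1}} \cap \supp{a_{t'}} = \varnothing$ for $t' \le t-1$), eliminates the off-diagonal components of $M e_{i_t}$ at indices less than $i_{t-1}$. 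Chaining these arguments through the overlap structure, and using Lemma~\ref{lem:diaginequiv}(vi) to the effect that $\supp{a_r}$ avoids all overlap indices so that $M a_r = b_r$ further pins down the scalars, forces $M e_{i_t} \in \langle e_{i_t} \rangle$ for each $t \le r-2$, so $M \in D$, contradicting $\B \notin \A^D$. The main obstacle is carrying out this inductive analysis cleanly in the presence of overlaps $\supp{a_t} \cap \supp{a_{t+1}}$ strictly larger than $\{i_t\}$, which is where the full strength of the normal form in Lemma~\ref{lem:diaginequiv} is needed.
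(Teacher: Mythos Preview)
Your outline matches the paper's proof in its essentials: normalise to $a_j=b_j$ for $j<r$, reduce to $\supp{C}=\{1,\ldots,n\}$, apply Lemma~\ref{lem:diaginequiv}, and set $\Gamma=\Delta\setminus\{\langle e_{i_1}\rangle,\ldots,\langle e_{i_{r-2}}\rangle\}$. (The paper carries out the reduction to full support by a minimal-counterexample induction on $n$, projecting onto $\langle e_j:j\in\supp{C}\rangle$; this is tidier than the ad hoc split you sketch for indices in $\supp{a_r}\setminus\supp{C}$, but your version can be made to work.)

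The genuine gap is the endgame: your claim that any $M\in\GGL_{(\Gamma)}$ with $\A^M=\B$ must lie in $D$ is too strong, and the inductive mechanism you describe does not establish it. Take $n=4$, $r=3$, $a_1=\langle e_1+e_2+e_3\rangle$, $a_2=\langle e_2+e_3+e_4\rangle$, so that $i_1=3$, $i_2=4$ and $\Gamma=\{\langle e_1\rangle,\langle e_2\rangle,\langle e_4\rangle\}$. Every $M\in\GGL_{(\Gamma\cup C)}$ has $Me_j=\lambda_je_j$ for $j\ne 3$ and $Me_3=(\lambda_1-\lambda_2)e_2+\lambda_1e_3$, with $\lambda_4=\lambda_1$ but $\lambda_2$ unconstrained. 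Lemma~\ref{lem:diaginequiv}\ref{diaginequiv6} forces $\supp{a_r}=\{1,4\}$, so the extra equation $a_r^M=b_r$ never touches the $e_2$-component of $Me_3$; hence $M\notin D$ is perfectly compatible with all your hypotheses, and ``$M a_r=b_r$ further pins down the scalars'' does not rescue the argument.

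What does hold, and what the paper proves, is the weaker statement that every $g\in\GGL_{(\Gamma\cup C)}$ satisfies $a_r^g=a_r$. One first shows $g$ is triangular, and then that the submatrix $g|_{\mathcal{J}}$ is scalar, where $\mathcal{J}=\{i_1,\ldots,i_{r-1}\}\cup\mathcal{U}$ and $\mathcal{U}$ is the set of indices lying in a unique $\supp{a_t}$. Since Lemma~\ref{lem:diaginequiv}\ref{diaginequiv6} gives $\supp{a_r}\subseteq\mathcal{U}\subseteq\mathcal{J}$, this yields $a_r^g=a_r\ne b_r$, the desired contradiction. The point is that one only controls $g$ on the ``clean'' coordinates $\mathcal{J}$, and this is exactly enough because $a_r$ lives there; trying to force diagonality at the genuinely overlapping indices (such as $2$ in the example above) is both unnecessary and impossible.
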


\begin{proof}
If $r = 2$, then set $\Gamma = \Delta$. Since $\GGL_{(\Gamma)} = D$ and $\B \notin \A^D$, we are done. %
Assume therefore that $r \ge 3$. We will suppose for a contradiction that $n$ is the smallest dimension for which the present lemma does not hold, for this value of $r$. 
Since $\A \s{r-1}{D} \B$, we may also assume that $(a_1,\ldots,a_{r-1}) = (b_1,\ldots,b_{r-1})$. %
Let $C = \{a_1, \ldots, a_{r-1}\}$. As $\B \notin \A^D$, no element of $D_{(C)}$ maps $a_r$ to $b_r$. Therefore, $\B \notin \A^{\GGL_{(\Gamma)}}$ for a given subset $\Gamma$ of $\Delta$ if and only if no element of $\GGL_{(\Gamma \cup C)}$ maps $a_r$ to $b_r$. We split the remainder of the proof into two cases, depending on whether or not $|\supp{C}| = n$.

\medskip

\noindent \textbf{Case $\mathbf{|\supp{C}| < n}$:} 
Let $\Delta_C:=\{\langle e_j \rangle \mid j \in \supp{C}\}$, let $L$ be the subspace $\langle \Delta_C \rangle$ of $V$, and let $a_\ell$ and $b_\ell$ be the projections onto $L$ of $a_r$ and $b_r$, respectively. Lemma \ref{lem:submats}\ref{submats2} shows that %
  the diagonal entries corresponding to $\{1, \ldots,n\} \setminus \supp{C}$ of elements of $D_{(C)}$ %
  can take any multiset of nonzero values. Since no element of $D_{(C)}$ maps $a_r$ to $b_r$, it follows that there is no matrix in $D_{(C)}$ whose restriction to $L$ maps $a_\ell$ to $b_\ell$. %
  By the minimality of $n$, %
  there exists a subset $\Gamma_C$ of $\Delta_C$ of size $|\Delta_C|+2-r$ such that no element of $\GL(L)_{(\Gamma_C \cup C)}$ maps $a_\ell$ to $b_\ell$. Setting $\Gamma$ to be  $\Gamma_C \cup (\Delta \setminus \Delta_C)$, %
so that $|\Gamma| = n+2-r$, we observe that no element of $\GGL_{(\Gamma \cup C)}$ maps $a_r$ to $b_r$. This is a contradiction, and so the lemma follows %
in this case.

\medskip

\noindent \textbf{Case $\mathbf{|\supp{C}| = n}$:} In this case, Lemma~\ref{lem:diaginequiv} applies,
so with the notation of that lemma, let $$\Gamma:=\Delta \setminus \{\langle e_{i_1} \rangle, \ldots, \langle e_{i_{r-2}} \rangle\}.$$ Then $|\Gamma| = n+2-r$ and $\langle e_1 \rangle, \langle e_{n} \rangle \in \Gamma$, since $i_1 \ge 2$ and $i_{r-1} = n$.

Let $g \in \GGL_{(\Gamma \cup C)}$. To complete the proof, we will show 
that $a_r^g = a_r \ne b_r$, by  showing that $g|_{\supp{a_r}}$ is scalar.  We will first show that $g$ is lower triangular. %
It is clear that $g$ stabilises $\langle e_1 \rangle \in \Gamma$. Suppose inductively that $g$ stabilises $\langle e_1, e_2, \ldots, e_s \rangle$ for some $s \in \{1,\ldots,n-1\}$. If $\langle e_{s+1} \rangle \in \Gamma$,
then $g$ stabilises $E_{s+1}:= \langle e_1, e_2, \ldots, e_s \rangle + \langle e_{s+1} \rangle = \langle e_1, e_2, \ldots, e_{s+1} \rangle$. Otherwise, $s+1 = i_t$ for some $t \in \{1,\ldots,r-2\}$, and then Lemma \ref{lem:diaginequiv}\ref{diaginequiv2} shows that $\{s+1\} \subsetneq \supp{a_t} \subseteq \{1, \ldots, s+1\}$.
In this case, $g$ again stabilises $\langle e_1, e_2, \ldots, e_s \rangle + a_t = E_{s+1}$. %
Hence by induction, $g$ is lower triangular.

Now, let $\mathcal{I}:=\{i_1,\ldots,i_{r-1}\}$, let $\mathcal{U}$ be the set of integers that each lie in the support of a unique subspace in $C$, and let $\mathcal{J}:=\mathcal{I} \cup \mathcal{U}$. %
We will show next that 
 $g|_{\mathcal{J}}$ is diagonal, by fixing $j \in \mathcal{J}$ and proving that $g_{kj} = 0$ whenever $k > j$. First, if $\langle e_k \rangle \in \Gamma$, then $g_{kj} = 0$, so $g_{nj} = 0$. Hence we may also assume that $k \in \mathcal{I} \setminus \{i_{r-1}\}$.

Suppose %
inductively that $g_{i_u,j}=0$ for some $u \ge 2$ (the base case here is $u = r-1$, so that $i_u = n$). We will show that if $i_{u-1} > j$, then $g_{i_{u-1},j} = 0$. By Lemma~\ref{lem:diaginequiv}\ref{diaginequiv5}, $i_{u-1}, i_u \in \supp{a_u}$ and furthermore Lemma~\ref{lem:diaginequiv}\ref{diaginequiv2}--\ref{diaginequiv3} shows that $\supp{a_u} \cap \mathcal{I} = \{i_{u-1},i_u\}$.  Thus by the previous paragraph and our inductive assumption, $g_{kj} = 0$ for all $k \in \supp{a_u} \setminus \{j,i_{u-1}\}$. In fact, Lemma \ref{lem:diaginequiv}\ref{diaginequiv2}--\ref{diaginequiv3} shows that each integer in $\supp{a_u}$ less than $i_{u-1}$ lies in $\supp{a_{u-1}}$. As $i_{u-1} > j \in \mathcal{J}$, we deduce from the definition of $\mathcal{J}$ that $j \notin \supp{a_u}$. Thus $g_{kj} = 0$ for all $k \in \supp{a_u} \setminus \{i_{u-1}\}$. As $g$ stabilises $a_u$, we deduce that $g_{i_{u-1},j} = 0$. Therefore, by induction, $g_{kj} = 0$ for all $k \ne j$ and so $g|_{\mathcal{J}}$ is diagonal.

Finally, we will show that $g|_{\mathcal{J}}$ is scalar. Let $j,k\in \mathcal{J} \cap \supp{a_t}$ for some $t \in \{1,\ldots,r-1\}$. As $g$ stabilises $a_t$, and as
$g|_{\mathcal{J}}$ is diagonal,  we deduce that
\begin{equation}
\label{eq:gab}
g_{jj} = g_{kk}.
\end{equation}
Now, by Lemma \ref{lem:diaginequiv}\ref{diaginequiv5}, $i_t \in \supp{a_t} \cap \supp{a_{t+1}}$ for each $t \in \{1,\ldots,r-2\}$, so $i_t \in \mathcal{J} \cap \supp{a_t} \cap \supp{a_{t+1}}$. Thus starting from $t = 1$ and proceeding by induction on $t$, it follows from \eqref{eq:gab} that $g_{jj} = g_{kk}$ for all $j, k  \in \mathcal{J}$, i.e.~$g|_{\mathcal{J}}$ is a scalar. Since $\supp{a_r} \subseteq \mathcal{J}$ by Lemma~\ref{lem:diaginequiv}\ref{diaginequiv6}, we deduce that $a_r^g = a_r \ne b_r$, as required.
\end{proof}

The following lemma is strengthening of
  Lemma~\ref{lem:subsetinequiv} in the case $|\F| = 3$ and $r = 2$,
  in which  the subset $\Gamma$ now has size $n + 1-r = n-1$.

\begin{lemma}
\label{lem:subsetinequivq3}
Suppose that $|\F| = 3$, and let $\A,\B \in \oD^{\,2}$. Suppose also that $\A \s{1}{D} \B$ and $\B \notin \A^D$. Then there exists a subset $\Gamma$ of $\Delta$ of size $n-1$ such that $\B \notin \A^{\GGL_{(\Gamma)}}$.
\end{lemma}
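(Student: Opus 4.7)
The first step is to translate the hypothesis $\B \notin \A^D$ into a coordinate condition over $\F_3$. From $\A \s{1}{D} \B$ we have $\supp{a_i} = \supp{b_i}$ for $i = 1, 2$. Since $|\F| = 3$, each of $a_i$ and $b_i$ has a unique representative up to sign with entries in $\{0, \pm 1\}$; fix such representatives $v^{(i)}, w^{(i)}$ and define the sign $c_i(j) := w^{(i)}_j/v^{(i)}_j \in \{\pm 1\}$ for $j \in \supp{a_i}$. A diagonal matrix maps $a_i$ to $b_i$ via some scalar $\mu_i \in \F^{\ast}$ if and only if its $j$-th diagonal entry equals $\mu_i c_i(j)$ on $\supp{a_i}$, so both conditions hold simultaneously exactly when the ratio $\mu_2/\mu_1$ can be fixed to equal $c_1(j)/c_2(j)$ for every $j \in I := \supp{a_1} \cap \supp{a_2}$. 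Hence $\B \notin \A^D$ is equivalent to $|I| \geq 2$ together with $c_2/c_1$ being non-constant on $I$.

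The plan is now to pick any $k \in I$ and set $\Gamma := \Delta \setminus \{\langle e_k\rangle\}$ (of size $n-1$), and to show that no $g \in \GGL_{(\Gamma)}$ maps $\A$ to $\B$. Such $g$ satisfies $g e_j = g_{jj} e_j$ with $g_{jj} \neq 0$ for $j \neq k$, and $g e_k = \alpha_k e_k + \sum_{j \neq k} \beta_j e_j$ with $\alpha_k \neq 0$. Writing $g v^{(i)} = \mu_i w^{(i)}$ and reading the $k$-th coordinate (using $k \in I$, so $v^{(i)}_k, w^{(i)}_k \ne 0$) forces $\mu_i = \alpha_k/c_i(k)$, and hence $\mu_2/\mu_1 = c_1(k)/c_2(k)$. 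By the first paragraph, we may choose $j \in I \setminus \{k\}$ with $c_2(j)/c_1(j) \neq c_2(k)/c_1(k)$. The equations in the $j$-th coordinate then give the linear system
\[
\begin{pmatrix} v^{(1)}_j & v^{(1)}_k \\ v^{(2)}_j & v^{(2)}_k \end{pmatrix}
\begin{pmatrix} g_{jj} \\ \beta_j \end{pmatrix}
= \begin{pmatrix} \mu_1 w^{(1)}_j \\ \mu_2 w^{(2)}_j \end{pmatrix}.
\]
All entries of the coefficient matrix lie in $\{\pm 1\}$, so its determinant is either $0$ or $\pm 2 \equiv \mp 1 \pmod{3}$. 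A short case analysis, substituting the forced value $\mu_2/\mu_1 = c_1(k)/c_2(k)$, shows that when the determinant vanishes the system is inconsistent, while when it is nonzero the unique solution has $g_{jj} = 0$, contradicting the invertibility of $g$. Either way no such $g$ exists.

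The main obstacle is verifying that the single sign discrepancy $c_2(j)/c_1(j) \neq c_2(k)/c_1(k)$ is exactly what blocks the $2 \times 2$ system, regardless of whether its determinant is zero. The hypothesis $|\F| = 3$ is essential here, both because it forces all sign ratios into $\{\pm 1\}$ and because it restricts the determinant to the three values $0$ and $\pm 1$, leaving no intermediate case in which the free parameter $\beta_j$ could be chosen to compensate and allow a valid $g$ to exist.
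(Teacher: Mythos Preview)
Your argument is correct. The route differs from the paper's in its setup, though the underlying idea is the same. The paper first uses $\A \s{1}{D} \B$ to normalise to $a_1 = b_1$, then reorders the basis so that $\supp{a_1} = \{1,\ldots,m\}$ and the projections of $a_2,b_2$ onto $\langle e_1,e_2\rangle$ are $\langle e_1+e_2\rangle$ and $\langle e_1-e_2\rangle$; taking $\Gamma = \Delta \setminus \{\langle e_2\rangle\}$, it then checks the three possible values of the single free off-diagonal entry of $g \in G_{(\Gamma \cup \{a_1\})}$ directly. You instead keep $a_1,b_1$ unnormalised, encode the obstruction as the non-constancy of $c_2/c_1$ on $I = \supp{a_1}\cap\supp{a_2}$, and reduce to a $2\times 2$ linear system over $\F_3$. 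The paper's normalisation makes the final case check shorter (three explicit values of $g_{21}$), while your formulation makes the role of the sign discrepancy more transparent and shows that \emph{any} $k \in I$ works for $\Gamma$. Your ``short case analysis'' is genuinely short: when the coefficient matrix is singular the rows are $\pm$-proportional and consistency would force $\mu_2/\mu_1 = c_1(j)/c_2(j)$, contradicting the choice of $j$; when it is nonsingular, writing $w^{(i)}_\ell = c_i(\ell) v^{(i)}_\ell$ and using that both $c_1(j)c_2(k) = -c_1(k)c_2(j)$ and $v^{(1)}_j v^{(2)}_k = -v^{(1)}_k v^{(2)}_j$ (each pair of $\pm 1$'s being unequal) shows the Cramer numerator for $g_{jj}$ vanishes. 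One minor point: your notation uses the left action $gv$, whereas the paper acts on the right, so in the paper's conventions $G_{(\Gamma)}$ is diagonal outside a \emph{row} rather than a column; this is purely cosmetic and does not affect the argument.
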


\begin{proof}
Since $ \A \s{1}{D} \B$, without loss of generality $a_1 = b_1$,
and there exists an element of $D$ mapping $a_2$ to $b_2$.
  Hence $a_2$ and $b_2$ have equal supports. Reordering the basis for $V$ if necessary, we may also assume that $\supp{a_1} = \{1,2,\ldots,m\}$ for some $m \ge 2$. Then by Lemma \ref{lem:submats}, the upper-left $m \times m$ submatrix of each matrix in $D_{a_1}$ is a scalar, while the remaining diagonal entries can be chosen independently. As $\B \notin \A^D$, no matrix in $D_{a_1}$ maps $a_2$ to $b_2$. We may therefore assume (by reordering the basis vectors in $\{e_1,\ldots,e_m\}$ and/or swapping $\A$ and $\B$ if necessary) that the projections of $a_2$ and $b_2$ onto $\langle e_1, e_2 \rangle$ are $\langle e_1 + e_2 \rangle$ and $\langle e_1 - e_2 \rangle$, respectively.

Now, let $\Gamma:=\Delta \setminus \{\langle e_2 \rangle\}$, let $g
\in \GGL_{(\Gamma \cup \{a_1\})}$,  and notice that $g$ is diagonal outside of the second row.  Write $a_1$ as $\langle
  \sum_{i=1}^m \alpha_i e_i \rangle$, with $\alpha_1 = 1$ and
  $\alpha_i \neq 0$ for all $i \in \{2,\ldots,m\}$.
  Since $a_1^g = a_1$ we deduce that without loss of generality the top left $2 \times 2$ submatrix of $g$ is $$\left( \begin{array}{cc} 1 & 0 \\
             g_{21} & 1 + \alpha_2 g_{21} \end{array} \right).$$ Let
         $v$ be the projection of $(e_1 + e_2)^g$
       onto $\langle e_1, e_2 \rangle$. 
     Recall that $\alpha_2 \ne 0$, and note that $g_{22} \neq 0$, since $g$ is invertible. Hence if $g_{21} = 1$, then $\alpha_2 = 1$ and $v = -e_1-e_2$; if $g_{21} = -1$, then $\alpha_2 = -1$ and $v = -e_2$; and if $g_{21} = 0$, then $v = e_1+e_2$. Hence, in each case, $v$ does not span $\langle e_1 - e_2 \rangle = b_2|_{\langle e_1, e_2 \rangle}$. Therefore $a_2^g \ne b_2$, and hence $\B \not\in \A^{G_{(\Gamma)}}$.
\end{proof}

Although the next result holds for all $\F$, it will only be useful in the case $|\F| = 3$.

\begin{proposition}
\label{prop:nequivalent}
Let $\X,\Y \in \Omega^{n+1}$ such that $\X \s{n}{\GGL} \Y$, and suppose that $\langle \X \rangle = V$. Then $\Y \in \X^\GGL$.
\end{proposition}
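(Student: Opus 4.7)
The plan is first to normalise via Lemma~\ref{lem:tuplefirstsubs-norm}, and then to construct an explicit diagonal element of $\GGL$ sending $\X$ to $\Y$. Since $\dim(\langle \X \rangle) = n$ and $\X \s{n}{\GGL} \Y$, Lemma~\ref{lem:tuplefirstsubs-norm} (applied with $a = n$) lets us assume after reordering that $\x_i = \y_i = \langle e_i \rangle$ for every $i \in \{1, \ldots, n\}$. Any element of $\GGL$ mapping $\X$ to $\Y$ must then fix $\Delta$ pointwise and hence lie in $D$, so writing $\x_{n+1} = \langle v \rangle$ and $\y_{n+1} = \langle w \rangle$ with $v = \sum_{j=1}^n \alpha_j e_j$ and $w = \sum_{j=1}^n \beta_j e_j$, the problem reduces to exhibiting a diagonal matrix $g$ with $v^g$ a nonzero scalar multiple of $w$.

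The heart of the argument is to exploit the $n$-equivalence to show that $\supp{v} = \supp{w}$. For each $i \in \{1,\ldots,n\}$, the hypothesis $\X \s{n}{\GGL} \Y$ produces some $g_i \in \GGL$ with $\x_j^{g_i} = \y_j$ for all $j \ne i$. In particular, $g_i$ fixes $\langle e_j \rangle$ for every $j \in \{1,\ldots,n\} \setminus \{i\}$, so in the matrix of $g_i$ with respect to $\{e_1, \ldots, e_n\}$, each row other than the $i$-th is a scalar multiple of the corresponding standard basis vector, and invertibility forces the $(i,i)$-entry of $g_i$ to be nonzero. Since $g_i$ also sends $v$ to $c_i w$ for some $c_i \in \F^*$, inspecting the $i$-th coordinate of $v^{g_i}$ yields $\alpha_i (g_i)_{ii} = c_i \beta_i$, so $\alpha_i \ne 0 \Rightarrow \beta_i \ne 0$. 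The same reasoning applied to $g_i^{-1}$, which witnesses the $n$-equivalence of the corresponding subtuples of $\Y$ and $\X$, gives the converse, so $\supp{v} = \supp{w} =: S$.

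Once this support equality is in hand, define $g \in D$ by letting its $j$-th diagonal entry be $\beta_j / \alpha_j$ for $j \in S$ and $1$ for $j \notin S$. Then $g$ is invertible, fixes each $\langle e_i \rangle$ for $i \le n$, and sends $v$ to $\sum_{j \in S} \beta_j e_j = w$. Hence $\Y = \X^g \in \X^{\GGL}$, completing the proof.

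The main conceptual step is the second paragraph: recognising that the pointwise stabiliser in $\GGL$ of $\Delta \setminus \{\langle e_i \rangle\}$ is rigid enough that the $n$-equivalence forces the supports of $v$ and $w$ to agree index-by-index. After that, the diagonal matrix $g$ is essentially determined by the coordinate ratios $\beta_j/\alpha_j$.
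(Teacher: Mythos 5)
Your proof is correct and follows essentially the same route as the paper's: normalise via Lemma~\ref{lem:tuplefirstsubs-norm}, show that $\supp{\x_{n+1}} = \supp{\y_{n+1}}$ using the near-diagonality of elements fixing all but one of the $\langle e_i \rangle$, and then finish with an explicit diagonal matrix. The only (harmless) difference is that the paper establishes the support equality by omitting a single well-chosen index $t \notin \supp{\x_{n+1}}$, whereas you run through all indices $i \le n$ and compare $i$-th coordinates; both rest on the same structural observation about $\GGL_{(\Delta \setminus \{\langle e_i\rangle\})}$.
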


\begin{proof}
  As $\dim(\langle \X \rangle) = n$, we may assume by Lemma \ref{lem:tuplefirstsubs-norm} that $x_i = y_i = \langle e_i \rangle$ for $i \in \{1, \ldots, n\}$. Let $S := \supp{x_{n+1}}$ and $T := \supp{y_{n+1}}$. 
  We will show that $S = T$; %
  it will then follow that there exists an element of $D = \GGL_{(\Delta)}$ mapping $\x_{n+1}$ to $\y_{n+1}$, and so $\Y \in \X^\GGL$.
  
If $S = \{1, \ldots, n\} = T$, then we are done. Otherwise,  exchanging $\X$ and $\Y$ if necessary (note that $\langle \Y \rangle = V$), we may assume that there exists an element $t \in \{1,\ldots,n\} \setminus S$. Let $\Gamma:=\Delta \setminus \{\langle e_t \rangle\}$. Then since $\X \s{n}{\GGL} \Y$, there exists an element of $\GGL_{(\Gamma)}$ mapping $\x_{n+1}$ to $\y_{n+1}$. As $\GGL_{(\Gamma)}$ stabilises each subspace $\langle e_i \rangle$ with $i \in S$, it follows that $S = T$, as required.
\end{proof}

We are now able to prove this section's main theorem. 

\begin{theorem}
  \label{thm:relupper}
Suppose that $n$ and $|\F|$ are at least $3$. Then $\RC(\GL_{n}(\F), \Omega)$ is at most $n+2$.
Moreover, %
$\RC(\GL_{n}(3),\Omega) \le n$.
\end{theorem}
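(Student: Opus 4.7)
The plan is a minimal counterexample argument combined with induction on $n$, proving both bounds in tandem by setting $r_0 := n+2$ for the first statement and $r_0 := n$ for the $|\F|=3$ statement. The base $n=3$ of the first statement is immediate from Proposition~\ref{prop:lodabounds}\ref{proppart:bianca}, which gives $\RC \le 2n-1 = n+2$; the $|\F|=3$ base $n=3$ is handled by the main argument itself. Suppose $\X, \Y \in \Omega^k$ satisfy $\X \s{r_0}{\GGL} \Y$ but $\Y \notin \X^\GGL$ with $k$ minimal. Then $k \ge r_0+1$ and $\X \s{k-1}{\GGL} \Y$, while the prior bounds $\RC(\GGL,\Omega) \le 2n-2$ (Theorem~\ref{thm:PSLupper}, when $n \ge 4$) or $\RC(\GGL,\Omega) \le 2n-1$ (Proposition~\ref{prop:lodabounds}\ref{proppart:bianca}, when $n = 3$) force $k$ into a bounded interval.

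Using Lemma~\ref{lem:tuplefirstsubs-norm} I would reduce to the case $x_i = y_i = \langle e_i \rangle$ for $1 \le i \le a := \dim\langle \X \rangle$. If $a < n$, Lemma~\ref{lem:tuplefulldim} together with the inductive bound $\RC(\GL_a(\F), \PG_1(\F^a)) \le a+2 \le r_0$ (or $\le a < r_0$ in the $|\F|=3$ instance, with the base $a=2$ furnished by Proposition~\ref{prop:lodabounds}\ref{proppart:scott}) yields $\Y \in \X^\GGL$, contradicting the choice. So $a = n$; write $\A := (x_{n+1}, \ldots, x_k)$, $\B := (y_{n+1}, \ldots, y_k)$, $r := k-n \ge 1$. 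The case $r = 1$ (which can arise only for the $|\F|=3$ statement) is immediate from Proposition~\ref{prop:nequivalent}. Otherwise $k-1 \ge n+1$ and Lemma~\ref{lem:eqsupps} forces any coordinate of $\A$ lying in $\Delta$ to equal the matching coordinate of $\B$, so by minimality $\A, \B \in \oD^{\,r}$; in addition, since any $g \in \GGL$ fixing the first $n$ entries lies in $D$, the equivalence $\X \s{k-1}{\GGL} \Y$ implies $\A \s{r-1}{D} \B$, while $\B \notin \A^D$.

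The remaining two sub-cases are parallel. If $r = 2$ (again only in the $|\F|=3$ setting, $k = n+2$), Lemma~\ref{lem:subsetinequivq3} supplies $\Gamma \subseteq \Delta$ of size $n-1$ with $\B \notin \A^{\GGL_{(\Gamma)}}$, and the hypothesis $\X \s{n+1}{\GGL} \Y$ (valid since $k-1 = n+1$), applied to the $(n+1)$-subset of indices corresponding to $\Gamma \cup \{x_{n+1}, x_{n+2}\}$, produces a $g \in \GGL_{(\Gamma)}$ with $\A^g = \B$, a contradiction. If $r \ge 3$, then $k \le 2n-2$ gives $r \le n-2$, so Lemma~\ref{lem:subsetinequiv} supplies $\Gamma$ of size $n+2-r \ge 4$ with $\B \notin \A^{\GGL_{(\Gamma)}}$; the hypothesis $\X \s{n+2}{\GGL} \Y$ (valid since $k-1 \ge n+2$), applied to the $(n+2)$-subset $\Gamma \cup \{x_{n+1}, \ldots, x_k\}$, yields the contradicting $g$. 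The main obstacle is parameter bookkeeping: ensuring in each sub-case that $|\Gamma| = n+2-r \ge 0$, which is exactly what the range $r_0 + 1 \le k \le 2n-2$ secures, and simultaneously that the total size $|\Gamma| + r$ matches an equivalence level genuinely supplied by either the hypothesis or the minimality-induced $\X \s{k-1}{\GGL} \Y$.
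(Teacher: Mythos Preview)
Your proposal is correct and follows essentially the same route as the paper: induction on $n$, reduction via Lemmas~\ref{lem:tuplefirstsubs-norm} and~\ref{lem:tuplefulldim} to the case $a=n$, then an application of Lemma~\ref{lem:subsetinequiv} (or Lemma~\ref{lem:subsetinequivq3} for $r=2$ when $|\F|=3$, or Proposition~\ref{prop:nequivalent} for $r=1$) to produce a $\Gamma$ whose existence contradicts the assumed equivalence. The one organizational difference is that you invoke the already-proved bound $\RC(\GGL,\Omega)\le 2n-2$ from Theorem~\ref{thm:PSLupper} together with minimality of $k$ to force $\X \s{k-1}{\GGL} \Y$ and $k\le 2n-2$, whereas the paper instead cascades through the three target levels $n+2$, $n+1$, $n$ (first establishing $\RC\le n+2$, then using that to reduce the $|\F|=3$ case to tuples of length exactly $n+2$, and so on). Both devices serve the same purpose of pinning down $r=k-n$ and guaranteeing the needed equivalence level $|\Gamma|+r$.
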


\begin{proof}
  Let $k \in \{n,n+1,n+2\}$, with $k =n+2$ if $|\F| > 3$. Additionally, let $\X,\Y \in \Omega^u$ with $u > k$ and $\X \s{k}{\GGL} \Y$, where $\GGL = \GL_{n}(\F)$. It suffices to prove that $\Y \in \X^\GGL$. Suppose, for a contradiction, that $n$ is the minimal dimension for which the theorem does not hold (for a fixed $\F$), and that $\Y \notin \X^\GGL$. Then for each $m \in \{2,\ldots,n-1\}$, using Proposition~\ref{prop:lodabounds}\ref{proppart:scott} in the case $m = 2$, we obtain $\RC(\GL_{m}(\F),\PG_1(\F^m)) < k$. Since $\Y \notin \X^\GGL$, Lemma \ref{lem:tuplefulldim} yields $\langle \X \rangle = V$. Hence by Lemma \ref{lem:tuplefirstsubs-norm}, we may assume without loss of generality\footnote{If the basis vectors for $V$ are reordered, as required by several of this section's earlier proofs, then we can reorder the subspaces in $(\x_1,\ldots,\x_{n})$ and $(\y_1,\ldots,\y_{n})$ in the same way to preserve this equality.} that
  $x_i = y_i = \langle e_i \rangle$ for $i \in \{1, \ldots, n\}$, and furthermore
  that all subspaces in $\X$ are distinct, so that $\x_i,\y_i \in \oD$ for each $i \ge n+1$.

  We will first consider the case $k \ge n+1$. Since $\X \s{n+1}{\GGL} \Y$, Lemma~\ref{lem:eqsupps} yields $\supp{\x_i} = \supp{\y_i}$ for all $i$. However, $\Y \notin \X^\GGL$. Hence there exists an integer $r \ge 2$ and subtuples $\A$ of $\X$ and $\B$ of $\Y$, with $\A,\B \in \oD^{\, r}$, such that
$(x_1, \ldots, x_n, a_1, \ldots, a_r)$ and $(x_1, \ldots, x_n, b_1, \ldots, b_r)$ are $(n+r-1)$-equivalent, but not equivalent, under $G$. 
  Equivalently, $\A \s{r-1}{D} \B$ and $\B \notin \A^D$.

  If $k = n+2$, then by Lemma~\ref{lem:subsetinequiv}, there exists a set $\Gamma := \{\langle e_{i_1} \rangle, \ldots,  \langle e_{i_{k-r}} \rangle \}$ such that $\B \notin \A^{\GGL_{(\Gamma)}}$. However, this means that the subtuples $(x_{i_1}, \ldots, x_{i_{k-r}}, a_1, \ldots, a_r)$ and $(x_{i_1}, \ldots, x_{i_{k-r}}, b_1, \ldots, b_r)$
  are not equivalent under $\GGL$. This contradicts the assumption that $\X \s{k}{\GGL} \Y$. Hence in this case, $\Y \in \X^{\GGL}$, as required, so $\RC(\GGL) \le n+2$.
  If $|\F| > 3$, then we are done.

Therefore, assume for the rest of the proof that $|\F| = 3$ and suppose first that $k = n+1$. By the previous paragraph,
 $\RC(\GGL) \le n+2$.
 Therefore, to prove that  $\RC(\GGL) \le k$, it suffices to show that $\X \s{n+2}{\GGL} \Y$ whenever $\X \s{k}{\GGL} \Y$. Thus by replacing $\X$ and $\Y$ by suitable subtuples, if necessary, we may assume that $u = n+2$.
 In this case, $r = 2$, and by Lemma \ref{lem:subsetinequivq3}, there exists a subset $\Gamma$ of $\Delta$ of size $k-r$ such that $\B \notin \A^{\GGL_{(\Gamma)}}$. Arguing as in the previous paragraph, this contradicts the assumption that $\X \s{k}{\GGL} \Y$. Thus $\RC(\GGL) \le n+1$. 

Finally, suppose that  $k = n$. Since $\RC(\GGL) \le n+1$, we may assume that $u = n+1$. However, since $\X \s{n}{\GGL} \Y$ and $\langle \X \rangle = V$, Proposition \ref{prop:nequivalent} shows that $\Y \in \X^\GGL$. Therefore, $\RC(\GGL) \le n$.
\end{proof}

\section{Action on 1-spaces: lower bounds}\label{sec:lower1}

In this section, we again assume that $|\F| \ge
  3$, and write $\Omega:=\Omega_1 = \PG_1(V)$. We drop the assumption that $n \ge 3$ and permit $n = 2$. We shall now prove lower bounds for the relational complexity of each group $\gp$ satisfying ${\SL_n(\F) \trianglelefteq \gp \le \GamL_n(\F)}$, acting on $\Omega$.

For some results in this section, we will assume that $\F =
  \F_q$ is finite, and when doing so we fix a primitive element $\omega$, and
  assume that $q = p^f$ for $p$ prime. Additionally,  we 
will write $\PGamL_n(q)/\PSL_n(q) = \langle \delta, \phi \rangle$,
with $\PGL_n(q)/\PSL_n(q) = \langle \delta \rangle$.
Here, the
automorphism $\phi$ can be chosen to be
induced by the automorphism of $\GL_n(q)$ which
raises each matrix entry to its $p$th power, and
with a slight abuse of notation,
we will also write $\phi$ to denote this automorphism of $\GL_n(q)$,
and to denote a generator for $\Aut(\F_q)$. %
If $\F$ is an arbitrary field, then the group $\GamL_n(\F)$ is
  still a semi-direct product of $\GL_n(\F)$ by
  $\Aut(\F)$ (see, for example, \cite[Theorem 9.36]{rotman}), but of course $\GL_n(\F)/\SL_n(\F)$ and $\Aut(\F)$ need
  not be cyclic.

We
let $Z := Z(\GL_n(\F))$, and will write $\id_{n}$ for the $n \times n$ identity matrix,
and $E_{ij}$ for the $n \times n$ matrix with $1$ in the
$(i,j)\th$ position and $0$ elsewhere. We write
$A \oplus B$ for the block diagonal matrix with blocks $A$ and $B$.

Our first result is completely general and easy to prove,
  although we shall later prove much tighter bounds for various
  special cases.

\begin{theorem}\label{thm:lower_bound_n}
 Let $\F$ be arbitrary, and let $H$ satisfy $\SL_n(\F) \trianglelefteq H
 \le \GamL_n(\F)$. Then $\RC(\gp, \Omega) \ge n$.
\end{theorem}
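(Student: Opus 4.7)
The plan is to exhibit tuples $\X,\Y \in \Omega^n$ that are $(n-1)$-equivalent under $\SL_n(\F)$ (and hence under $\gp$) but not in the same $\gp$-orbit; this immediately gives $\RC(\gp,\Omega) \ge n$. The case $n = 1$ is trivial, as $|\Omega| = 1$, so I assume $n \ge 2$.

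The pair I would use is
$$\X := \bigl(\langle e_1\rangle,\, \langle e_2 \rangle,\,\ldots,\, \langle e_{n-1}\rangle,\, \langle e_1 + e_2 + \cdots + e_{n-1}\rangle\bigr),$$
$$\Y := \bigl(\langle e_1\rangle,\, \langle e_2 \rangle,\,\ldots,\, \langle e_{n-1}\rangle,\, \langle e_n\rangle\bigr).$$
The key invariant distinguishing these is the dimension of the span: $\dim\langle \X\rangle = n-1$, while $\dim\langle \Y\rangle = n$. Every element of $\GamL_n(\F)$ is a semilinear bijection of $V$ and so preserves the dimension of the span of any finite collection of subspaces; hence $\X$ and $\Y$ lie in distinct $\gp$-orbits.

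To verify $\X \s{n-1}{\gp} \Y$, I would examine the $(n-1)$-subtuple $(\X \setminus \x_j)$ for each $j \in \{1,\ldots,n\}$. If $j = n$, then $(\X \setminus \x_n) = (\Y \setminus \y_n)$ and there is nothing to check. If $j < n$, then $(\X \setminus \x_j)$ consists of the lines $\langle e_i \rangle$ for $i \in \{1,\ldots,n-1\} \setminus \{j\}$ together with $\langle e_1 + \cdots + e_{n-1}\rangle$; since $e_j$ has nonzero coefficient in the last vector, these form an $(n-1)$-tuple of linearly independent $1$-spaces. The corresponding subtuple $(\Y \setminus \y_j)$ is also an $(n-1)$-tuple of linearly independent $1$-spaces. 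I would then invoke the standard fact that $\SL_n(\F)$ acts transitively on ordered $(n-1)$-tuples of linearly independent $1$-spaces of $V$: extend each tuple to a basis, take the unique element of $\GL_n(\F)$ mapping one basis to the other, and rescale the $n$-th basis vector in the target so that the resulting element has determinant $1$. This supplies an element of $\SL_n(\F) \le \gp$ sending $(\X \setminus \x_j)$ to $(\Y \setminus \y_j)$, as required.

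There is no substantive obstacle here: this is the crudest lower bound on $\RC(\gp,\Omega)$, and the construction exploits the classical observation that linear independence of $n-1$ lines is unable to detect the dimension of their span. The one subtlety worth flagging is that I need $\SL_n(\F)$, rather than $\GL_n(\F)$, to act transitively on such tuples, since $\gp$ is only assumed to contain $\SL_n(\F)$; the determinant-$1$ rescaling step above is exactly what ensures this.
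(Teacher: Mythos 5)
Your proposal is correct and follows essentially the same route as the paper: both exhibit a pair of $n$-tuples of $1$-spaces distinguished by the dimension of their span (an invariant of $\GamL_n(\F)$) and check $(n-1)$-equivalence using elements of $\SL_n(\F)$. The only cosmetic difference is that the paper writes down explicit unitriangular matrices $\id_n - E_{\ell n}$ as witnesses, whereas you invoke transitivity of $\SL_n(\F)$ on tuples of independent $1$-spaces; both are valid.
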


\begin{proof}%
  Define $\X, \Y \in \Omega^{n}$ by $x_i = y_i = \langle e_i \rangle$ for $i \in \{1, \ldots, n-1\}$, with $x_n = \langle \sum_{i = 1}^n e_i \rangle$ and $y_n = \langle \sum_{i = 1}^{n-1} e_i \rangle$.
  Then %
$\dim(\langle X \rangle) = n$ and $\dim(\langle Y \rangle) = n-1$, so
no element of $\GamL_{n}(\F)$ maps $\X$ to $\Y$. Hence $\Y \not\in \X^{H}$. 

Now, let $h_{\ell}:=\id_{n}-E_{\ell n}$ for each $\ell \in \{1, \ldots, n-1\}$, and $h_{n}:=\id_{n}$. Then $h_\ell \in \SL_n(\F) \le H$ and $(\X \setminus \x_\ell)^{h_\ell}=(\Y \setminus \y_\ell)$, for each $\ell \in \{1, \ldots, n\}$.
Therefore $\X \s{n-1}{H} \Y$, and so the result follows.
\end{proof}

Our next two results focus on the special cases $n = 2$ and $n= 3$.

\begin{lemma}\label{lem:PGamL2(q)}
  Assume that %
    $q \ge 8$, and let $\gp$
  satisfy $\SL_2(q) \trianglelefteq \gp \le \GamL_2(q)$.
Then $\RC(\gp) \geq 4$, except that $\RC(\SigL_2(9)) = 3$.
\end{lemma}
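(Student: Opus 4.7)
I plan to establish the lower bound $\RC(\gp) \ge 4$ by constructing, for each $\gp$ in the lemma's scope other than $\SigL_2(9)$, tuples $X, Y \in \Omega^4$ with $X \s{3}{\gp} Y$ but $Y \notin X^\gp$. I parameterise $\Omega$ by $\F_q \cup \{\infty\}$ via $\langle e_1 \rangle \leftrightarrow \infty$, $\langle e_2 \rangle \leftrightarrow 0$, $\langle e_1 + \lambda e_2 \rangle \leftrightarrow \lambda$, and take $X = (\infty, 0, 1, \alpha)$ and $Y = (\infty, 0, 1, \beta)$ for a suitable pair $\alpha \ne \beta$ in $\F_q \setminus \{0, 1\}$. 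Writing $\ogp$ for the image of $\gp$ in $\PGamL_2(q)$, sharp $3$-transitivity of $\PGL_2(q)$ gives that the pointwise stabiliser of $(\infty, 0, 1)$ in $\PGamL_2(q)$ is exactly $\langle \phi \rangle$; so setting $T := \ogp \cap \langle \phi \rangle$, we have $Y \in X^\gp$ iff $\beta = \alpha^\tau$ for some $\tau \in T$.

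The three non-trivial $3$-subtuple equivalences translate, via the M\"obius description of two-point stabilisers in $\PGamL_2(q)$, into the existence of elements of $\ogp$ of prescribed shape mapping Frobenius twists of $\alpha$ to $\beta$. When $\ogp \ge \PGL_2(q)$, all three are automatic. When $q$ is odd and $\ogp \cap \PGL_2(q) = \PSL_2(q)$, they reduce (using Frobenius-invariance of the quadratic character $\chi$ on $\F_q^\times$) to a combined condition: there exists $\sigma$ in the field-automorphism image of $\ogp$ with $\chi(\alpha) = \chi(\beta)$ and $\chi(\alpha^\sigma - 1) = \chi(\beta - 1)$, i.e.\ $\alpha$ and a Frobenius conjugate of $\beta$ share a fibre of the map $\Phi : \gamma \mapsto (\chi(\gamma), \chi(\gamma-1))$ into $\{\pm 1\}^2$. ``Twisted'' groups such as $\ogp = M_{10}$, where $T$ is trivial and the action is sharply $3$-transitive, are handled immediately by any distinct $\alpha, \beta$.

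Existence of the required $(\alpha, \beta)$ then follows from pigeonhole. When $\ogp \ge \PGL_2(q)$ with $T$ nontrivial, I just need two $T$-orbits among the $q - 2$ elements of $\F_q \setminus \{0, 1\}$, which holds because $|T| \le \log_p q < q - 2$ for all $q \ge 8$. In the harder case $\ogp \cap \PGL_2(q) = \PSL_2(q)$, I need two orbits (for the appropriate subgroup of $\mathrm{Aut}(\F_q)$) sharing a $\Phi$-fibre; a counting argument based on the total number of orbits exceeding the four $\Phi$-fibres handles every admissible $(q, \ogp)$ with $q \ge 8$ except the single failure $\ogp = \PSigL_2(9)$, where a direct tabulation in $\F_9$ shows that the four $\phi$-orbits on the seven elements of $\F_9 \setminus \{0, 1\}$ fall bijectively into the four $\Phi$-fibres. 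Isolating precisely this exception is the main obstacle. For $\SigL_2(9)$ itself, the equality $\RC(\SigL_2(9)) = 3$ completes the proof: the lower bound $\ge 3$ is immediate from $\PSigL_2(9)$ being $2$-transitive but not $3$-transitive on $\Omega$, and the matching upper bound $\le 3$ is confirmed by a direct computation using \cite{GAP, RComp}.
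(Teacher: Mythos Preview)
Your argument is correct and isolates $\PSigL_2(9)$ exactly as required, but it is organised differently from the paper. The paper splits according to whether $\gp \le \langle Z,\SigL_2(q)\rangle$. In Case~(a) (this inclusion fails, or $q$ is even) it writes down four explicit matrices $g_1,\ldots,g_4 \in \GL_2(q)$ witnessing $3$-equivalence for the concrete pair $(\omega,\alpha)$ with $\alpha\in\F_p^*\setminus\{1\}$ (or $\alpha=\omega^3$ for $q$ even), then adjusts by a Frobenius twist to land each $g_j\phi^{i_j}$ inside $\gp$. In Case~(b) ($q$ odd, $\gp \le \langle Z,\SigL_2(q)\rangle$) it runs a counting argument to find $\lambda,\tau$ with $(1-\lambda^2\tau)(1-\tau)^{-1}$ a square and $\lambda^2\tau\notin\tau^{\langle\phi\rangle}$, then cites Hudson for the $\SL_2(q)$ witnesses; in your language this is exactly $\Phi(\tau)=\Phi(\lambda^2\tau)$ together with $\tau,\lambda^2\tau$ lying in distinct $\langle\phi\rangle$-orbits.

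Your route is more uniform and avoids explicit matrices entirely: $3$-transitivity of $\PGL_2(q)$ handles the case $\ogp \ge \PGL_2(q)$ at once, and in the remaining case the single condition $\Phi(\alpha)=\Phi(\beta)$ (taking $\sigma=1$; the existential over $\sigma$ you wrote is unnecessary and slightly muddles the statement) suffices, since the third two-point-stabiliser condition for $(0,1,\alpha)\to(0,1,\beta)$ follows from the first two by multiplicativity of $\chi$. The pigeonhole ``number of $T$-orbits on $\F_q\setminus\{0,1\}$ exceeds $4$'' is cleaner than the paper's Case~(b) estimate and pins down $\PSigL_2(9)$ just as sharply. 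One cosmetic point: your separate treatment of ``twisted'' groups such as $M_{10}$ is redundant, since they are already covered by the $\Phi$-fibre pigeonhole with the correct $T=\ogp\cap\langle\phi\rangle$; only $M_{10}$ itself has $T$ trivial, and it too has $7>4$ singleton orbits.
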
 

\begin{proof}
 The claim about $\SigL_2(9)$ is an easy computation in GAP using \cite{RComp}, so exclude this group from now on.
  We divide the proof into two cases. For each, we define $\X, \Y \in \Omega^4$ such that $\X \s{3}{\gp} \Y$ but $\Y \not\in \X^{\gp}$. In both cases, we set
  $(\X \setminus x_4) = (\Y \setminus y_4) = (\langle e_1 \rangle, \langle e_2 \rangle, \langle e_1 + e_2 \rangle)$.

\medskip

\noindent \textbf{Case (a)}: Either $q$ is even, or $\gp \not\le \langle Z, \SigL_2(q)\rangle$, where $Z = Z(\GL_n(\F))$. If $q$ is odd, then let $\alpha \in \F_p^\ast \setminus \{1\}$, and otherwise let $\alpha = \omega^3$, so that $\alpha$ is not in the orbit $\omega^{\langle \phi \rangle}$.
Then let $x_4 = \langle e_1+\omega e_2 \rangle$ and $y_4 = \langle e_1+\alpha e_2 \rangle$.

The stabiliser in $\gp$ of $(\X \setminus x_4) = (\Y \setminus y_4)$ is contained in $\langle Z,  \phi \rangle$.
As $\alpha \not\in \omega^{\langle \phi \rangle}$, no element of this stabiliser
maps $x_4$ to $y_4$, and so $\Y \not\in \X^{\gp}$.
On the other hand, for each $j \in \{1,2,3,4\}$, the matrix $g_j \in
\GL_2(q)$ given below maps $(\X \setminus \x_{j})$ to $(\Y \setminus
\y_{j})$. 
$$\begin{array}{c}
g_1 = \begin{pmatrix} 1 & (\alpha-\omega)(1-\omega)^{-1} \\ 0 & 1-(\alpha-\omega)(1-\omega)^{-1} \end{pmatrix}, \quad
g_2 = \begin{pmatrix} 1-(\omega\alpha^{-1}-1)(\omega-1)^{-1} & 0 \\  (\omega\alpha^{-1}-1)(\omega-1)^{-1} & 1 \end{pmatrix}, \\
g_3 = \begin{pmatrix} 1 & 0 \\ 0 & \alpha\omega^{-1}  \end{pmatrix}, \quad 
g_4 = I_2.
  \end{array}$$
  
If $q$ is even, then some scalar multiple of $g_j$ lies in $H$ for all $j$, so $\X \s{3}{\gp} \Y$ and
we are done.  If instead $q$ is odd, then
our assumption that $\gp \not\le \langle Z, \SigL_2(q)\rangle$ implies that $\gp$ contains a scalar multiple of an element  
  $\diag(\omega, 1)\phi^i$ for some $i \ge 0$, as $\diag(\omega,1)$ induces the automorphism $\delta$ of $\PSL_2(q)$. Hence for each $j$, there exists $\phi^{i_j} \in \Aut(\F_q)$ such that a scalar multiple of $g_j \phi^{i_j}$ lies in $H$. Since $\alpha \in \F_p^\ast$, each $\phi^{i_j}$ fixes $\Y$, and thus $\X \s{3}{\gp} \Y$.

\medskip

\noindent \textbf{Case (b)}: $q$ is odd and $\gp \le \langle Z, \SigL_2(q) \rangle$.  Since $\gp \ne \SigL_2(9)$, and since Proposition~\ref{prop:lodabounds}\ref{proppart:scott} yields the result when $\gp = \SL_2(9)$, we may assume that $q > 9$. We generalise Hudson's \cite[\S5.4]{Hudson} proof that $\RC(\SL_2(q),\Omega) \geq 4$. First, let $\mathcal{S} := \F_q \setminus \{0, 1, -1\}$ and $\mathcal{T} := \F_q \setminus \{0, 1\}$, and for each $\lambda \in \mathcal{S}$ define a map $\theta_\lambda: \mathcal{T} \to \F_q$ by $\mu \mapsto (1-\lambda^2 \mu)(1-\mu)^{-1}$. We will show that there exist elements $\lambda \in \mathcal{S}$ and $\tau \in \mathcal{T}$ satisfying the following conditions: 
\begin{center}
(i) $(\tau)\theta_\lambda$ is a square in $\F_q^*$, and
$\quad$ (ii)  no automorphism of $\F_q$ maps $\tau$ to $\lambda^2\tau$.
\end{center}
It is easy to see that for each
$\lambda \in \mathcal{S}$, the image $\mathrm{im}(\theta_\lambda) = \F_q
\setminus \{1,\lambda^2\}$, so the map $\theta_\lambda$ is injective, and the preimage of any nonzero square in $\mathrm{im}(\theta_\lambda)$ lies in $\mathcal{T}$ and satisfies Condition (i). Hence for each $\lambda \in \mathcal{S}$, there are precisely $(q-1)/2 - 2$ choices for $\tau \in \mathcal{T}$ satisfying Condition (i).

Given $\lambda \in \mathcal{S}$,  since $\lambda^2 \neq 1$, Condition (ii) is equivalent to requiring that $\lambda^2 \tau \ne \tau^{p^k}$ for all $k \in \{1,\ldots,f-1\}$, i.e.~$\lambda^2 \ne \tau^{p^k-1}$ for all $k$. There are exactly $(q-3)/2 = (q-1)/2 - 1$ distinct squares of elements of 
  $\mathcal{S}$,
and precisely $(q-1)/(p-1)$ elements in $\F_q^*$ that are $(p-1)$-th powers. Hence if $p > 3$, then there exists $\lambda \in \mathcal{S}$ such that $\lambda^2$ is not a $(p-1)$-th power in $\F_q$. Observe that then $\lambda^2$ %
is not a $(p^k-1)$-th power for any $k$, and so this $\lambda$ and any corresponding $\tau$ from the previous paragraph satisfy both conditions.%

Suppose instead that $p = 3$, and fix $\lambda \in \mathcal{S}$. The
number of elements $\tau \in \F_{3^f}^*$ not satisfying (ii),
i.e.~with $\lambda^2 = \tau^{3^k-1}$ for some $k \in
\{1,\ldots,f-1\}$, is at most \[(3-1)+(3^2-1)+\cdots+(3^{f-1}-1) =
  (3+3^2+\cdots+3^{f-1})-(f-1).\] On the other hand, we established
that the number of elements $\tau \in \mathcal{T}$ satisfying (i) is equal to \[(3^f-1)/2-2 = (3-1)(1+3+3^2+\cdots+3^{f-1})/2-2 = (3+3^2+\cdots+3^{f-1})-1.\] Since $q > 9$, and hence $f > 2$, there again exists $\tau\in \mathcal{T}$ satisfying both conditions.

Finally, fix such a $\lambda \in \mathcal{S}$ and $\tau \in \mathcal{T}$, and complete the  definition of $\X, \Y \in \Omega^{4}$ by setting $x_4 = \langle e_1+ \tau e_2 \rangle$  and $y_4 = \langle e_1+ \lambda^2 \tau e_2 \rangle$.
The stabiliser in $H$ of $(\X \setminus x_4) = (\Y \setminus y_4)$ is contained in $\langle Z, \phi \rangle$.
By Condition (ii), no such element maps $x_4$ to $y_4$, so $\Y \notin
\X^\gp$. However, the proof of \cite[Theorem 5.4.6]{Hudson}
uses Condition (i) to exhibit explicit elements of $\SL_2(q)$ mapping each $3$-tuple of $\X$ to the corresponding $3$-tuple of $\Y$. Therefore, $\X \s{3}{\gp} \Y$, and the result follows.
\end{proof}

\begin{lemma}\label{lem:PSL-n=3}
  Assume that $\PSL_3(\F) \neq \PGL_3(\F)$, and let $\gp$ satisfy $\SL_3(\F) \trianglelefteq \gp \le \GamL_3(\F)$. If $\F$ is finite, or if $\gp \le \GL_3(\F)$, then $\RC(\gp) \geq 5$.
\end{lemma}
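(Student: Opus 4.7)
The plan is to exhibit tuples $\X, \Y \in \Omega^5$ with $\X \s{4}{\gp} \Y$ but $\Y \notin \X^{\gp}$, witnessing $\RC(\gp, \Omega) \ge 5$. Both tuples will share the standard frame $(\langle e_1 \rangle, \langle e_2 \rangle, \langle e_3 \rangle, \langle e_1+e_2+e_3 \rangle)$ as their first four entries, and differ only in the fifth entries $\x_5, \y_5$. Since $\PGL_3(\F)$ acts regularly on projective frames of $\F^3$, any $g \in \gp \le \GamL_3(\F)$ fixing this frame has its $\GL_3(\F)$-part lying in the centre $Z(\GL_3(\F))$, which acts trivially on $\Omega$; so under the hypothesis $\gp \le \GL_3(\F)$ non-equivalence follows from $\y_5 \ne \x_5$, whereas in the finite case where $\gp \cap \Aut(\F)$ may be non-trivial we must additionally ensure $\y_5 \ne \sigma(\x_5)$ for all such $\sigma$.

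For 4-equivalence, the identity realises $i = 5$. For each $i \in \{1,2,3,4\}$, under an appropriate general-position assumption, there is a unique $g_i \in \PGL_3(\F)$ fixing the three frame entries other than $\x_i$ and sending $\x_5$ to $\y_5$, and it lifts to $\SL_3(\F) \le \gp$ iff $\det(g_i) \in (\F^*)^3$. The main construction exploits the hypothesis $(\F^*)^3 \ne \F^*$ to pick $\eta = \lambda^3 \in (\F^*)^3$ with $\eta \ne 1$, and then takes $\x_5 = \langle e_1 + \tau_1 e_2 + \tau_2 e_3 \rangle$ and $\y_5 = \langle e_1 + \eta \tau_1 e_2 + \eta \tau_2 e_3 \rangle$, so that $g_4 = \diag(\lambda^{-2}, \lambda, \lambda) \in \SL_3(\F)$ handles $i = 4$. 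The main technical step is an explicit determinantal computation of $g_1, g_2, g_3$, which I expect will reduce the cube conditions to ratios of the form $(1 - \tau_j)/(1 - \eta \tau_j) \in (\F^*)^3$ for $j \in \{1, 2\}$. Such $\tau_j$ are then produced by the parametrisation $\tau_j = (c_j^3 - 1)/(c_j^3 \eta - 1)$ as $c_j$ varies over $\F^*$, giving enough freedom (for $|\F|$ large enough) to enforce general position and to avoid $\y_5 = \sigma(\x_5)$ for any field automorphism $\sigma \in \gp$.

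The main obstacle is the exceptional case $\F = \F_4$, where $(\F_4^*)^3 = \{1\}$ so no valid $\eta$ exists; moreover, only two projective points satisfy the general-position condition with the standard frame, and these are interchanged by the Frobenius automorphism $\phi$. For $\gp \le \GL_3(\F_4)$ I would take $\x_5 = \langle e_1 + \omega e_2 + \omega^2 e_3 \rangle$ and $\y_5 = \langle e_1 + \omega^2 e_2 + \omega e_3 \rangle$, with $\omega$ a generator of $\F_4^*$, verifying $g_4 = \diag(1, \omega, \omega^2) \in \SL_3(\F_4)$ and constructing the remaining $g_i$ by hand using $\omega^3 = 1$. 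The finitely many groups $\gp \le \PGamL_3(\F_4)$ containing $\phi$ fall outside this construction and would be handled either by lengthening the tuples or by a direct verification via the GAP package \cite{RComp}.
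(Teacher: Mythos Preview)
Your framework is correct and coincides with the paper's: both fix the standard frame as the first four entries, choose $\x_5,\y_5$ in general position, observe that the stabiliser of the frame in $\GamL_3(\F)$ is $\langle Z,\Aut(\F)\rangle$, and then realise each $4$-subtuple by an element of $\SL_3(\F)$. The difference lies in the choice of $(\x_5,\y_5)$. The paper takes $\x_5=\langle e_1+\lambda e_2+\lambda^2 e_3\rangle$ and $\y_5=\langle e_1+\lambda^{-1}e_2+\lambda^{-2}e_3\rangle$ with $\lambda$ of multiplicative order at least~$3$ (the primitive element $\omega$ when $\F=\F_q$), and simply writes down five explicit matrices $g_1,\ldots,g_5\in\SL_3(\F)$. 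Your $\eta$-scaling $\y_5=\langle e_1+\eta\tau_1 e_2+\eta\tau_2 e_3\rangle$ with $\eta\in(\F^*)^3\setminus\{1\}$ and the parametrisation $\tau_j=(c_j^3-1)/(c_j^3\eta-1)$ is a legitimately different (and more conceptual) route: the determinant computations you anticipate are correct, with $\det(g_2),\det(g_3)$ reducing to $(1-\tau_j)/(1-\eta\tau_j)=c_j^3$ and $\det(g_1)$ to their product.

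The trade-off is in the field-automorphism step. The paper's choice makes this a one-line computation: $\phi^i(\x_5)=\y_5$ forces $\omega^{p^i}=\omega^{-1}$, which for $q\ge 7$ is impossible. Your ``enough freedom for $|\F|$ large enough'' hides real work: with $\eta$ fixed, the valid $d_j=c_j^3$ range only over the $(q-1)/3-1$ nontrivial cubes, and for each $i$ one must exclude those $d_j$ for which $\tau(d_j)^{p^i-1}=\eta$. For small prime powers such as $q=16$ there are only four admissible $d_j$ and three nontrivial Frobenius powers to avoid, so the counting is not automatic and would require either explicit verification or a sharper argument (e.g.\ varying $\eta$ as well). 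This is the one genuine soft spot in the plan; for $\gp\le\GL_3(\F)$ and for $\F$ infinite, where $\Aut(\F)$ plays no role, your approach goes through cleanly. Both approaches defer the full $\F_4$ case to \cite{RComp}; your explicit pair $(\omega,\omega^2)$ does handle $\gp\le\GL_3(4)$ (the determinants all equal $\alpha^3=1$).
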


\begin{proof}
If $|\F| = 4$, then we verify the result in GAP using \cite{RComp}, so assume that $|\F| \ge 7$. If $\F$ is finite, then let $\lambda := \omega$, whilst if $\F$ is infinite, then let $\lambda$ be any element of $\F^\ast$ of multiplicative order at least $3$.
  Define $\X,\Y \in \Omega^{5}$ by $x_i = y_i = \langle e_i \rangle$
  for $i \in \{1, 2, 3\}$,  $x_4 = y_4 = \langle e_1 + e_2 + e_3
  \rangle$, $x_5 = \langle e_1+\lambda e_2+\lambda^2 e_3 \rangle$, and
  $y_5 = \langle  e_1+\lambda^{-1} e_2+ \lambda^{-2} e_3 \rangle$,
  so that $x_5 \neq y_5$. 
  
  We first show that $\Y \not\in \X^{H}$.
  The stabiliser in $H$ of $(\X \setminus x_5) = (\Y \setminus y_5)$
  lies in  $H \cap \langle Z, \Aut(\F) \rangle$, so if $\F$ is
  infinite then we are done. Assume therefore that $\F =
    \F_q$. %
    If $%
    x_5^{\phi^i}= \y_5$, then $\lambda^{p^i} = \lambda^{-1}
= \lambda^{p^f - 1}$. Since $i \in  \{0, \ldots, f-1\}$ and
  $\lambda = \omega$, we deduce that $(p, f, i) \in
\{(2,2,1),(3,1,0)\}$, contradicting $q \ge 7$. 
Thus $\Y \not\in \X^{\gp}$.

Next, for all $\F$, we show that %
$\X \s{4}{\gp} \Y$. Let %
$$\begin{array}{c}
g_1:=\begin{pmatrix} \lambda & \lambda+1 &\lambda+\lambda^{-1}\\
0 & -1 & 0 \\
0 & 0 & - \lambda^{-1} \end{pmatrix}, \quad
        g_2:=\begin{pmatrix} -\lambda & 0 & 0\\
\lambda+1 & 1 & 1 + \lambda^{-1} \\
0 & 0 & - \lambda^{-1} \end{pmatrix}, \\
\\
 g_3:=\begin{pmatrix} -\lambda & 0 & 0\\
0 & -1 & 0 \\
\lambda+\lambda^{-1} & 1 + \lambda^{-1}  & \lambda^{-1} \end{pmatrix}, \quad g_4:=\begin{pmatrix} \lambda^{2} & 0 & 0\\
0 & 1 & 0 \\
0 & 0 &  \lambda^{-2} \end{pmatrix}, \ \mbox{ and } \ \  g_5 := \id_{3}.
  \end{array}$$
  Observe that $\det(g_\ell)=1$ for each $\ell \in \{1, \ldots, 5\}$, and so $g_\ell \in \SL_3(\F) \leq H$. It is also easy to check that $(\X \setminus \x_{\ell})^{g_\ell}=(\Y \setminus \y_{\ell})$ for each $\ell$. Thus $\X \s{4}{\gp}\Y$, and so $\RC(\gp) \geq 5$.
\end{proof}

Our remaining results hold for all sufficiently large $n$. The
  first  is specific to $\GL_n(\F)$.

\begin{proposition}\label{prop:PGLlow}
 If $n \geq 3$ and $|\F| \ge 4$, then $\RC(\GL_n(\F),\Omega) \geq n+2$.
\end{proposition}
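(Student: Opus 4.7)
The plan is to exhibit tuples $\X, \Y \in \Omega^{n+2}$ that are $(n+1)$-equivalent under $G := \GL_n(\F)$ but not $G$-equivalent, which forces $\RC(G,\Omega) \ge n+2$. Since $|\F| \ge 4$, I first pick $\lambda \in \F^\ast$ with $\lambda^2 \ne 1$: in characteristic $2$ any $\lambda \ne 1$ works, and otherwise $|\F^\ast| \ge 3$ supplies an element outside $\{1,-1\}$. Then I set $x_i = y_i = \langle e_i \rangle$ for $i \in \{1,\ldots,n\}$, $x_{n+1} = y_{n+1} = \langle e_1 + e_2 + \cdots + e_n \rangle$, $x_{n+2} = \langle e_1 + \lambda e_2 \rangle$ and $y_{n+2} = \langle e_1 + \lambda^{-1} e_2 \rangle$; note that $x_{n+2} \ne y_{n+2}$ as $\lambda \ne \lambda^{-1}$.

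Non-equivalence of $\X$ and $\Y$ follows quickly: any $g \in G$ mapping $\X$ to $\Y$ must pointwise fix $\Delta$, so $g \in D$, and must also fix $x_{n+1}$, which forces the diagonal entries of $g$ to coincide. Hence $g$ is scalar and fixes $x_{n+2}$, contradicting $x_{n+2} \ne y_{n+2}$.

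For $(n+1)$-equivalence, I need a $g_i \in G$ with $g_i \cdot (\X \setminus x_i) = \Y \setminus y_i$ for each $i \in \{1,\ldots,n+2\}$. The identity handles $i = n+2$, and $\diag(1,\lambda^{-2},1,\ldots,1)$ handles $i = n+1$ since it fixes every $\langle e_j \rangle$ and sends $e_1+\lambda e_2$ to $e_1+\lambda^{-1}e_2$. For $i \in \{1,\ldots,n\}$, I look for a matrix $g_i$ whose $j$-th column is a scalar multiple $c_j e_j$ for every $j \ne i$, with the $i$-th column free. Such a $g_i$ automatically fixes $\langle e_j \rangle$ for $j \ne i$, and the remaining two conditions—fixing $x_{n+1}$ and mapping $x_{n+2}$ to $y_{n+2}$—translate into a small linear system on the $c_j$ and the entries of column $i$. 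The values of $c_1$ and $c_2$ are essentially dictated by the image of $x_{n+2}$, and once they are fixed the entries in column $i$ are uniquely determined by the requirement that $g_i$ fix $x_{n+1}$; an invertible solution exists precisely because $\lambda \ne 1$.

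The one step requiring real work is the construction of $g_i$ for $i \in \{1,\ldots,n\}$: the single free column of $g_i$ must absorb the scalar mismatch produced by the $c_j$ so that $x_{n+1}$ is preserved, while the pair $(c_1,c_2)$ is already forced by the prescribed image of $x_{n+2}$. The nonvanishing of the determinant ultimately reduces to $\lambda^2 \ne 1$, which is exactly why the hypothesis $|\F| \ge 4$ appears.
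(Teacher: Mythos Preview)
Your proposal is essentially identical to the paper's proof: the same tuples $\X,\Y$, the same non-equivalence argument via the scalar stabiliser of $(x_1,\ldots,x_{n+1})$, and the same scheme for the witnesses $g_i$ (the paper simply writes them out explicitly rather than describing the linear system). One small correction: the condition $\lambda^2\neq 1$ is what guarantees $x_{n+2}\neq y_{n+2}$ and hence $\Y\notin\X^G$, whereas solving the system for the $g_i$ and obtaining invertible matrices only needs $\lambda\notin\{0,1\}$.
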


\begin{proof}
  Since $|\F| \ge 4$, there exists an element $\lambda \in \F^\ast$ such that $\lambda \neq \lambda^{-1}$ (so $\lambda \neq -1$). Define $\X, \Y \in \Omega^{n+2}$ by $x_i = y_i = \langle e_i \rangle$ for $i \in \{1, \ldots, n\}$, $x_{n+1} = y_{n+1} = \langle \sum_{i = 1}^n e_i \rangle$, $x_{n+2} = \langle e_1+\lambda e_2 \rangle$ and $y_{n+2} = \langle  e_1+ \lambda^{-1} e_2 \rangle$.

   The stabiliser in $\GL_n(\F)$ of $(\X \setminus x_{n+2}) = (\Y \setminus y_{n+2})$ is the group of scalar matrices,  so $\Y \not\in \X^{\GL_n(\F)}$.
    Additionally, it is easily verified that, for each $j \in \{1, \ldots, n+2\}$, the matrix $g_j \in \GL_n(q)$ given below maps $(\X \setminus \x_{j})$ to $(\Y \setminus \y_{j})$.
    $$\begin{array}{c}
        g_1 = \begin{pmatrix} \lambda & 1 + \lambda \\
          0 & -1 \end{pmatrix} \oplus \lambda I_{n-2}, \quad g_2 =
              \begin{pmatrix} -1 & 0 \\ 1+\lambda^{-1} & \lambda^{-1} \end{pmatrix} \oplus \lambda^{-1}I_{n-2}, \quad  g_{n+1} = \diag(\lambda, \lambda^{-1}, \lambda, \ldots, \lambda),  \\
        g_j = g_{n+1} + (\lambda-\lambda^{-1})E_{j2}  \ \mbox{ for $j \in \{3, \ldots, n\}$}, \quad g_{n+2} = I_{n}.\end{array}$$
Hence $\X \s{n+1}{\GL_n(\F)} \Y$, and so the result follows.
\end{proof}

In the light of Proposition~\ref{prop:PGLlow}, the next result in
particular bounds the relational complexity of all remaining groups
when $\PSL_n(\F) = \PGL_n(\F)$.

\begin{lemma}\label{lem:PGamLn(q)}
 Let $\F$ be arbitrary, assume that $n \geq 3$, and let $\gp$ satisfy $\GL_n(\F) \trianglelefteq \gp \le \GamL_n(\F)$ and $\gp \neq
  \GL_n(\F)$.  Then $\RC(\gp) \geq n+3$.
\end{lemma}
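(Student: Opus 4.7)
Since $H \neq \GL_n(\F)$, the image $K$ of $H$ in $\Aut(\F) \cong \GamL_n(\F)/\GL_n(\F)$ is nontrivial, so I fix a nontrivial $\sigma \in K$ and let $\widetilde{\sigma} \in H$ be a preimage (acting on $V$ coordinate-wise with respect to $\{e_1, \ldots, e_n\}$). My plan is to construct $X, Y \in \Omega^{n+3}$ with $X \s{n+2}{H} Y$ and $Y \notin X^H$, extending the idea of Proposition~\ref{prop:PGLlow}.

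The construction will take the shape $x_i = y_i = \langle e_i \rangle$ for $i \le n$ and $x_{n+1} = y_{n+1} = \langle \sum_{i=1}^n e_i \rangle$, and then $x_{n+2}, y_{n+2}, x_{n+3}, y_{n+3}$ will be $1$-spaces of the form $\langle e_1 + \alpha e_2 \rangle$ for carefully chosen scalars $\alpha \in \F^\ast$. The scalars will be tuned so that one of the pairs $(x_{n+j}, y_{n+j})$ coincides and serves as a ``lock'' on the field-automorphism component of any candidate stabiliser element, while the other pair distinguishes $X$ from $Y$. The $\sigma$-twist introduced by $K$ is what prevents the ``lock'' coordinate from reducing the stabiliser all the way to scalars (as it would in the $\GL_n(\F)$ case), and simultaneously lets us produce enough elements of $H$ to witness $(n+2)$-equivalence.

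Verifying $Y \notin X^H$ is by the standard chain of stabiliser reductions: fixing each $\langle e_i \rangle$ forces the $\GL_n(\F)$-component of any such $h$ to be diagonal, fixing $\langle \sum e_i \rangle$ forces it to be scalar, and fixing the ``lock'' coordinate forces the automorphism component $\tau \in K$ to fix the corresponding scalar. The scalars will be chosen so that the remaining stabiliser cannot send the distinguishing $x_{n+j'}$ to $y_{n+j'}$. Verifying $X \s{n+2}{H} Y$ will proceed by exhibiting, for each $j \in \{1, \ldots, n+3\}$, an explicit $h_j \in H$ with $(X \setminus x_j)^{h_j} = Y \setminus y_j$. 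For $j$ equal to the ``lock'' position, the identity works; for $j$ equal to the distinguishing position, an element of the form $c \widetilde{\sigma}$ works; for $j \in \{1, \ldots, n+1\}$, a diagonal element (possibly combined with an element of $K$) with entries adjusted to fix the ``lock'' and to send $x_{n+2}$ to $y_{n+2}$ will work, analogous to the matrices $g_j$ in Proposition~\ref{prop:PGLlow}.

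The main obstacle will be choosing the scalars so that all the compatibility equations arising from the cases $j \in \{1, \ldots, n+1\}$ are simultaneously solvable in $\F$. For $j \in \{1, \ldots, n\}$, dropping $\langle e_j \rangle$ releases an extra column of the diagonal matrix, and the resulting constraints involve both the ``lock'' scalar and an expression in $\lambda$ and $\lambda^\sigma$. Ensuring solvability relies crucially on $\sigma$ being nontrivial, and may require a case distinction based on whether certain norm-type equations $\lambda^{1+\sigma} = \mathrm{const}$ have solutions in $\F^\ast$, in the spirit of the two-case argument in Lemma~\ref{lem:PGamL2(q)}.
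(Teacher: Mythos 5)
Your overall architecture matches the paper's: use the fact that $\GL_n(\F)\le H$ to extract a pure field automorphism $\psi\in H$, build a tuple whose first $n+1$ entries force any stabilising element into $\langle Z,\Aut(\F)\rangle$, add a ``lock'' coordinate that pins down the automorphism component, and a final coordinate that the pinned-down stabiliser cannot move correctly. However, your specific choice --- putting \emph{all four} of $x_{n+2},y_{n+2},x_{n+3},y_{n+3}$ in the form $\langle e_1+\alpha e_2\rangle$ --- cannot be made to work in general, and the obstruction is not merely a solvability issue to be deferred. Write the lock as $\langle e_1+\mu e_2\rangle$ and the distinguishing pair as $\langle e_1+\lambda e_2\rangle\mapsto\langle e_1+\lambda^{\sigma}e_2\rangle$ (the image must have this form for the $j=n+2$ witness to exist). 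For $j=n+1$ the witness must fix every $\langle e_i\rangle$, so it is $\diag(g_1,\ldots,g_n)\tau$, and with $d:=(g_2/g_1)^{\tau}$ the lock forces $\mu^{\tau}d=\mu$ while the distinguishing coordinate forces $\lambda^{\tau}d=\lambda^{\sigma}$; dividing gives $(\lambda/\mu)^{\tau}=\lambda^{\sigma}/\mu$. If $H=\GamL_n(p^2)$, so that the automorphism group available is $\{1,\sigma\}$ with $\sigma$ of order $2$, then $\tau=1$ forces $\lambda=\lambda^{\sigma}$ (making $X=Y$), and $\tau=\sigma$ forces $\mu^{\sigma}=\mu$ --- but then $\sigma$ itself stabilises the first $n+2$ coordinates and maps $x_{n+3}$ to $y_{n+3}$, so $Y\in X^{H}$ and non-equivalence fails. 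No choice of scalars escapes this: the lock and the distinguishing pair are competing for the single ratio $g_2/g_1$.

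The paper avoids this by giving the lock support $\{1,2,3\}$: it takes $x_{n+2}=y_{n+2}=\langle e_1+e_2+\lambda e_3\rangle$ and $x_{n+3}=\langle e_1+\lambda e_2\rangle$, $y_{n+3}=\langle e_1+\lambda^{\psi}e_2\rangle$. Against the full stabiliser of $(x_1,\ldots,x_{n+1})$ (scalars times automorphisms) the $e_3$-coefficient of the lock still pins $\tau$ to fix $\lambda$, giving $Y\notin X^{H}$; but in the $j=n+1$ case the lock's constraint on $g_2/g_1$ is the automorphism-independent condition $g_2=g_1$, and its automorphism-sensitive constraint falls on the free entry $g_3$, so the element $\diag(1,1,\lambda^{-1}\lambda^{\psi^{-1}},1,\ldots,1)\psi$ works. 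This is also exactly where the hypothesis $n\ge 3$ enters --- a warning sign for your construction, which never uses it. To repair your proof, move the $\lambda$ in the lock onto $e_3$ and then supply the explicit witnesses $h_1,\ldots,h_{n+3}$ (the cases $j\le n$ need diagonal-plus-one-off-diagonal-entry elements composed with $\psi$, as in the paper); the case analysis on norm equations you anticipate is then unnecessary.
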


\begin{proof} 
  Since $\GL_n(\F)$ is a proper subgroup of $\gp$,
there exists a nontrivial $\psi \in \gp \cap \Aut(\F)$ and an element $\lambda \in \F^*$ with $\lambda^\psi \ne \lambda$.
We define $\X, \Y \in \Omega^{n+3}$ by $x_i = y_i = \langle e_i \rangle$ for $i \in \{1, \ldots, n\}$,
$$x_{n+1} = y_{n+1} = \Big\langle \sum_{i=1}^{n} e_i \Big\rangle, \ x_{n+2} = y_{n+2} =  \langle e_1+e_2+\lambda e_3 \rangle, \  x_{n+3} = \langle e_1+\lambda e_2 \rangle, \ y_{n+3} = \langle  e_1+ \lambda^{\psi} e_2 \rangle.$$ 
We claim that $\X \s{n+2}{\gp} \Y$, but $\Y \not\in \X^{\gp}$, from which
the result will follow.

The stabiliser in $H$ of $(x_1, \ldots, x_{n+1}) = (y_1 ,\ldots, y_{n+1})$ is contained in $\langle Z, \Aut(\F) \rangle$. However, no element of $\langle Z, \Aut(\F) \rangle$ maps $(x_{n+2},x_{n+3})$ to $(y_{n+2},y_{n+3})$, so $\Y \not\in \X^{\gp}$.
The reader may verify that, for each $j \in \{1, \ldots, n+3\}$, the
element $h_j \in \langle \GL_n(\F), \psi \rangle \le H$ given below maps $(\X \setminus \x_{j})$ to $(\Y \setminus \y_{j})$, where we define $\tau := (\lambda - 1)^{-1}$ (notice that $\lambda \neq 1$). 
 $$\begin{array}{c}
   h_1 = \begin{pmatrix}1  & -\tau(\lambda^{\psi} - \lambda) \\
      0 & 1 + \tau(\lambda^{\psi} - \lambda)\end{pmatrix} \oplus I_{n-2}, \quad 
          h_2 = \begin{pmatrix} 1 - \tau(\lambda(\lambda^{-1})^\psi - 1) & 0 \\
            \tau (\lambda(\lambda^{-1})^\psi - 1) & 1 \end{pmatrix} \oplus I_{n-2}, \\
    h_3 = \left( \begin{pmatrix}
      1 - \tau (\lambda(\lambda^{-1})^{\psi^{-1}}-1) & 0 & 0\\
      0 & 1 - \tau (\lambda(\lambda^{-1})^{\psi^{-1}}-1) & 0 \\
      \tau (\lambda(\lambda^{-1})^{\psi^{-1}}-1)  & \tau (\lambda(\lambda^{-1})^{\psi^{-1}}-1)  & 1\end{pmatrix} \oplus I_{n-3} \right) \psi,\\
    h_j = \left( \diag (1, 1, \lambda^{-1}\lambda^{\psi^{-1}}, 1, \ldots, 1) + (1-\lambda^{-1}\lambda^{\psi^{-1}})E_{j3} \right) \psi \ \mbox{ for  $j \in \{4, \ldots, n\}$}, \\ h_{n+1} = \diag(1, 1, \lambda^{-1}\lambda^{\psi^{-1}}, 1, \ldots, 1)\psi, \quad
h_{n+2} = \psi, \quad h_{n+3} = I_n.
\end{array}$$
Hence $\X \s{n+2}{\gp} \Y$, and the result follows.
\end{proof}

\begin{lemma}\label{lem:general_n+2}
  Let $\F$ be arbitrary, assume that $n \ge 4$, and let $H$ satisfy $\SL_n(\F) \trianglelefteq H \le \GamL_n(\F)$ and $H \not\le \GL_n(\F)$. Then $\RC(H) \ge n+2$.
\end{lemma}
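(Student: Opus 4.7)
The plan is to construct tuples $\X, \Y \in \Omega^{n+2}$ satisfying $\X \s{n+1}{H} \Y$ and $\Y \notin \X^H$, by adapting the construction of Lemma~\ref{lem:PSL-n=3} with an additional anchor subspace to accommodate $n \ge 4$. Since $H \not\le \GL_n(\F)$, the image $A$ of $H$ under the projection $\GamL_n(\F) \to \Aut(\F)$ is a nontrivial subgroup. I would fix $\psi \in A \setminus \{1\}$ and $\lambda \in \F^*$ with $\psi(\lambda) \ne \lambda$, and then take $x_i = y_i = \langle e_i \rangle$ for $i \in \{1, \ldots, n\}$; $x_{n+1} = y_{n+1} = \langle \sum_{i=1}^n e_i \rangle$; and $x_{n+2} = \langle e_1 + \lambda e_2 + \alpha e_3 \rangle$, $y_{n+2} = \langle e_1 + \psi(\lambda) e_2 + \alpha e_3 \rangle$, where $\alpha \in \F^*$ is chosen so that every $\phi' \in A$ fixing $\alpha$ also fixes $\lambda$. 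For $\F = \F_q$, taking $\alpha$ to be an element generating $\F_q$ over $\F_p$ makes the $A$-stabiliser of $\alpha$ trivial, which suffices; for arbitrary $\F$ an analogous element outside the fixed fields of nontrivial elements of $A$ can be chosen.

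For non-equivalence, any $h \in H$ with $\X^h = \Y$ must stabilise $(x_1, \ldots, x_{n+1})$, so $h$ lies in $H \cap \langle Z, \Aut(\F) \rangle$ and takes the form $cI\phi'$ with $\phi' \in A$. Its action sends $x_{n+2}$ to $\langle e_1 + \phi'(\lambda) e_2 + \phi'(\alpha) e_3 \rangle$, and equating this with $y_{n+2}$ forces both $\phi'(\alpha) = \alpha$ and $\phi'(\lambda) = \psi(\lambda)$. The first condition forces $\phi'(\lambda) = \lambda$ by the choice of $\alpha$, contradicting $\psi(\lambda) \ne \lambda$; thus $\Y \notin \X^H$.

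For $(n+1)$-equivalence, I would exhibit $h_j \in H$ mapping $\X \setminus x_j$ to $\Y \setminus y_j$ for each $j$. Setting $h_{n+2} = \id_n$ handles that case. For $h_{n+1}$, an element of the form $d\psi$ with $d = \diag(1, 1, \alpha/\psi(\alpha), d_4, \ldots, d_n)$ works, since the hypothesis $n \ge 4$ provides at least one free entry $d_i$ to make $d\psi$ land in the appropriate coset of $H$. For $j \in \{4, \ldots, n\}$, I would take $h_j$ that fixes $\langle e_1 \rangle, \langle e_2 \rangle, \langle e_3 \rangle$ componentwise with the same scalar ratios as $h_{n+1}$, with its $j$-th column determined by the preservation of $\langle \sum_{i=1}^n e_i \rangle$; the remaining free diagonal entries $\gamma_i$ for $i \in \{4, \ldots, n\} \setminus \{j\}$ give sufficient determinant freedom to place $h_j$ in $H$.

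The hard part will be constructing $h_j \in H$ for $j \in \{1, 2, 3\}$. In these cases $h_j$ has its $j$-th column $v = h_j(e_j)$ determined by the constraints from $\sum e_i$ and from mapping $x_{n+2}$ to $y_{n+2}$, and after taking $\phi' = \psi$ the diagonal entries $\gamma_i$ and the vector $v$ all become linear in a single free parameter $\lambda_0 \in \F^*$. The resulting determinant has the form $\lambda_0^n K_j$ for a nonzero constant $K_j$ depending on $\lambda$, $\alpha$, and $\psi$, and verifying that this product can be placed in the coset $\det g_0 \cdot \det(H \cap \GL_n(\F))$ (where $g_0 \psi \in H$) requires coordinating the choice of $\alpha$ with the $n$-th power structure of $\F^*$, and possibly varying the field automorphism $\phi'$ used within $h_j$; this is where the most delicate casework lies.
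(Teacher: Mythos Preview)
Your proposal has a genuine gap precisely where you flag it: the cases $j\in\{1,2,3\}$. With your tuples, any $h_j=g\phi'$ deleting one of the first three coordinates must fix $\langle e_i\rangle$ for all $i\notin\{j\}$, fix $\langle\sum_{i=1}^n e_i\rangle$, and carry $x_{n+2}$ to $y_{n+2}$. Since $x_{n+2}$ is supported on $\{1,2,3\}$, the $\sum e_i$-condition forces $\gamma_i=c$ for every $i\ge 4$ (so that the image of $x_{n+2}$ has no $e_i$-component for $i\ge 4$), and the $x_{n+2}\mapsto y_{n+2}$ condition then pins down the remaining two $\gamma$'s as fixed multiples of $c$. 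Thus the only free parameter is $c$, and $\det(g)=c^{\,n}\cdot K$ for a constant $K$ depending only on $\lambda,\alpha,\phi'$. For $g\phi'$ to lie in $H$ you need $\det(g)$ to land in a prescribed coset of $\det(H\cap\GL_n(\F))$, i.e.\ you need $K^{-1}\cdot(\text{target})$ to be an $n$-th power in $\F^*$ modulo that subgroup. There is no reason this should hold: already for $n=4$, $\F=\F_9$, $H=\langle \SL_4(9),\psi\rangle$ one is asking that a specific element of $\F_9^*$ be a fourth power, which fails for half the nonzero field elements. Varying $\phi'$ over $A$ gives only finitely many constants $K$, and varying $\alpha$ is constrained by your stabiliser condition, so it is not at all clear the obstruction can be removed. (Incidentally, for arbitrary $\F$ the claim that one can choose $\alpha$ outside the fixed field of every nontrivial element of $A$ is false---think of $\F=\overline{\F_p}$---though this is reparable by taking $\alpha=\lambda$.)

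The paper sidesteps all of this with a single idea: it takes the tuples from Lemma~\ref{lem:PGamLn(q)} in dimension $n-1$ (valid since $n-1\ge 3$), supported on $\langle e_1,\ldots,e_{n-1}\rangle$, giving $\X,\Y\in\Omega^{(n-1)+3}=\Omega^{n+2}$. Non-equivalence follows exactly as before. For $(n+1)$-equivalence, each witnessing map $h_j$ from Lemma~\ref{lem:PGamLn(q)} acts on $\langle e_1,\ldots,e_{n-1}\rangle$; one then extends it by a block $\oplus\,\beta$ on $\langle e_n\rangle$, with $\beta\in\F^*$ chosen to force the determinant into the correct value so that the extended element lies in $\SL_n(\F)$ or in $h\cdot\SL_n(\F)\cdot\psi\subseteq H$ as appropriate. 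The spare coordinate $e_n$ provides exactly the determinant freedom your construction lacks for $j\in\{1,2,3\}$.
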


\begin{proof}
  Since $H \not\le \GL_n(\F)$, there exist elements $h \psi \in H$ and
  $\lambda \in \F^*$ such that $h \in \GL_n(q)$, $\psi \in
\Aut(\F_q)$, and $\lambda^\psi \ne \lambda$. Let $\X, \Y \in \Omega^{n+2}$ be as in the proof of Lemma~\ref{lem:PGamLn(q)}, but supported only on the first $n-1$ basis vectors, so that $\langle e_n \rangle$ lies in neither $\X$ nor $\Y$, and $x_n = y_n = \langle\sum_{i = 1}^{n-1} e_i\rangle$. Just as in that proof, one may check that $\Y \not\in X^{H}$, but $X \s{n+1}{\gp} \Y$.
  \end{proof}

 The next result applies, in particular, to all  groups $H$ such
    that $\SL_n(\F) \trianglelefteq H$ and either $H < \GL_n(\F)$ or $H
    \le \SigL_n(\F) \neq \GamL_n(\F)$. We write $\F^{\times n}$ for the subgroup of $\F^\ast$ consisting of $n$-th powers, which is the set of possible determinants of scalar matrices in $\GL_n(\F)$.

    \begin{proposition}\label{prop:PSLlower}
      Assume that $n \ge 4$ and $|\F| \geq 3$, 
      and let $\gp$ satisfy $\SL_n(\F) \trianglelefteq \gp \le \GamL_n(\F)$. Assume also that the set $\{\det(g)^\psi \F^{\times n} \mid  g \psi \in H$ \text{with} $g \in \GL_n(\F), \psi \in \Aut(\F)\}$ is a proper subset of $\F^\ast/\F^{\times n}$. 
    Then $\RC (\gp) \geq 2n-2.$
\end{proposition}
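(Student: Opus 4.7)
The plan is to construct tuples $\X, \Y \in \Omega^{2n-2}$ that are $(2n-3)$-equivalent under $H$ but not $H$-equivalent, forcing $\RC(H) \ge 2n-2$. By the hypothesis, fix $\mu \in \F^\ast$ with $\mu\F^{\times n}$ lying outside the set $S := \{\det(g)^\psi \F^{\times n} \mid g\psi \in H\}$. Define $x_i = y_i := \langle e_i\rangle$ for $1 \le i \le n$; $x_{n+j} = y_{n+j} := \langle e_j + e_{j+1}\rangle$ for $1 \le j \le n-3$; $x_{2n-2} := \langle e_{n-2} + e_{n-1} + e_n\rangle$; and $y_{2n-2} := \langle e_{n-2} + e_{n-1} + \mu e_n\rangle$. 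The first $2n-3$ entries form a skeleton on which $\X$ and $\Y$ agree; only the final entry differs.

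To establish $\Y \notin \X^H$, suppose $h = g\psi \in H$ sends $\X$ to $\Y$, with $g \in \GL_n(\F)$ and $\psi \in \Aut(\F)$. Since $\psi$ fixes every $e_i$, fixing each basis subspace forces $g = \diag(d_1,\ldots,d_n)$; fixing the chain $\langle e_j + e_{j+1}\rangle$ for $1 \le j \le n-3$ yields $d_1 = \cdots = d_{n-2}$; and sending $x_{2n-2}$ to $y_{2n-2}$ forces $d_{n-2} = d_{n-1} = \lambda$ and $d_n = \mu\lambda$ for some $\lambda \in \F^\ast$. Hence $g = \diag(\lambda,\ldots,\lambda,\mu\lambda)$ and $\det(g) = \mu\lambda^n$, so $\det(g)^\psi \F^{\times n} = \mu^\psi \F^{\times n}$ belongs to $S$ by definition. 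The contradiction then comes from refining the choice of $\mu$ so that its entire $\Aut(\F)$-orbit in $\F^\ast/\F^{\times n}$ lies outside $S$; such a refinement is possible because $S$ itself is $\Aut(\F)$-stable.

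To establish $\X \s{2n-3}{H} \Y$, for each $\ell \in \{1,\ldots,2n-2\}$ one exhibits an explicit $h_\ell \in \SL_n(\F) \subseteq H$ sending $(\X \setminus x_\ell)$ to $(\Y \setminus y_\ell)$. The case $\ell = 2n-2$ is immediate, taking $h_{2n-2} = \id$. For $\ell \in \{1,\ldots,n\}$, removing a basis subspace frees the $\ell$-th column of $h_\ell$, so a suitable lower-triangular element of $\SL_n(\F)$ can accommodate the switch from $x_{2n-2}$ to $y_{2n-2}$ while fixing all remaining skeleton entries. For $\ell \in \{n+1,\ldots,2n-3\}$, removing the chain link $\langle e_{\ell-n} + e_{\ell-n+1}\rangle$ relaxes the equality $d_{\ell-n} = d_{\ell-n+1}$, leaving enough flexibility for a diagonal element of $\SL_n(\F)$ to do the job; the free parameter is normalised so that the determinant equals $1$. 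Each case is a routine explicit computation.

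The principal difficulty is the refinement step for $\mu$: verifying that $S$ is stable under the $\Aut(\F)$-action on $\F^\ast/\F^{\times n}$. This rests on analysing the cocycle $\psi \mapsto \alpha_\psi D$ classifying the extension $1 \to H \cap \GL_n(\F) \to H \to \Psi \to 1$, where $D := \det(H \cap \GL_n(\F))$ and $\Psi \le \Aut(\F)$ is the image; key is that both $D$ and $\F^{\times n}$ are $\Aut(\F)$-stable subgroups of $\F^\ast$, and that for $\tau \in \Psi$ the product $(g\psi)(g_\tau \tau) \in H$ places a suitable $\tau$-translate of $\det(g)^\psi \F^{\times n}$ back into $S$ after absorbing an auxiliary factor from $D\cdot \F^{\times n}$. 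Once this stability is in hand, the hypothesis that $\mu\F^{\times n} \notin S$ upgrades to the disjointness of its full $\Aut(\F)$-orbit from $S$, completing the argument.
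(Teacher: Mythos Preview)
Your construction has a genuine gap in the $(2n-3)$-equivalence step. When you delete a chain link $x_\ell = \langle e_{j_0} + e_{j_0+1}\rangle$ with $j_0 = \ell - n \in \{1,\ldots,n-3\}$, all of $\langle e_1\rangle, \ldots, \langle e_n\rangle$ remain in the subtuple, so any $h_\ell$ sending $(\X \setminus x_\ell)$ to $(\Y \setminus y_\ell)$ is forced to be diagonal, say $\diag(d_1,\ldots,d_n)$. The surviving links together with the final entry then give $d_1 = \cdots = d_{j_0} = a$, $d_{j_0+1} = \cdots = d_{n-1} = b$ and $d_n = \mu b$, so $\det(h_\ell) = a^{j_0}b^{n-j_0}\mu$. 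For $h_\ell \in \SL_n(\F)$ you need $\mu^{-1} \in (\F^*)^{\gcd(j_0,\,n-j_0)}$, and this must hold simultaneously for every $j_0$. Already for $n = 6$ (take $\F = \F_7$ and $H = \SL_6(7)$, which satisfies the hypothesis) the cases $j_0 = 2$ and $j_0 = 3$ force $\mu$ to be both a square and a cube, hence a sixth power, so $\mu\F^{\times 6}$ is trivial and lies in $S$; thus no admissible $\mu$ exists and your tuples are not $(2n-3)$-equivalent. The paper sidesteps this by using a star rather than a chain: its tuples are $(\langle e_2\rangle,\ldots,\langle e_n\rangle,\langle e_1+e_2\rangle,\ldots,\langle e_1+e_n\rangle)$ and the $\alpha$-scaled analogue. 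Because $\langle e_1\rangle$ itself is absent, deleting any single entry always leaves room to place $\alpha$ and $\alpha^{-1}$ in two separate diagonal slots and keep the determinant equal to $1$.

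A smaller point: in your non-equivalence argument you forgot that $\psi$ acts on the coefficients of $(x_{2n-2})^g$, so the correct constraint is $d_n = \mu^{\psi^{-1}}\lambda$, not $d_n = \mu\lambda$. With this correction $\det(g)^\psi\F^{\times n} = \mu\F^{\times n}$ directly, and the contradiction is immediate with no refinement of $\mu$ needed. This is just as well, since your claimed $\Aut(\F)$-stability of $S$ is false: for $\F = \F_9$, $n = 4$ and $H = \langle \SL_4(9),\ \diag(\omega,1,1,1)\phi\rangle$ one computes $S = \{\F^{\times 4}, \omega^3\F^{\times 4}\}$ but $S^\phi = \{\F^{\times 4}, \omega\F^{\times 4}\}$.
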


\begin{proof}
By assumption, there exists an $\alpha \in \F^\ast$ such that $\alpha \neq \det(gz)^\psi$ for all $g\psi \in H$ and $z \in Z$.
  Define $\X, \Y \in \Omega^{2n-2}$ as follows:
\begin{align*}
\X &:= \big(\langle e_2 \rangle, \ldots, \langle e_{n} \rangle, \langle e_1+e_2 \rangle, \ldots ,\langle e_1+e_{n} \rangle \big); \text{ and}\\
\Y &:= \big(\langle e_2 \rangle, \ldots, \langle e_{n} \rangle, \langle \alpha e_1+e_2 \rangle, \ldots , \langle \alpha e_1+e_{n} \rangle \big).
\end{align*}
We show first that $\Y \not\in \X^\gp$, so
suppose for a contradiction that there exists $g\psi \in \gp$,
with $g \in \GL_n(\F)$ and $\psi \in \Aut(\F)$, such that $\X^{g\psi} = \Y$. As $g\psi$ fixes $\langle e_2 \rangle$ and $\langle e_3 \rangle$, and maps $\langle e_1+e_2 \rangle $ and $\langle e_1+e_3 \rangle $ to $\langle \alpha e_1+e_2 \rangle$ and $\langle \alpha e_1+e_3 \rangle$, respectively, we deduce that $e_1^{g\psi} \in \langle e_1, e_2 \rangle \cap \langle e_1, e_3 \rangle = \langle e_1 \rangle$. Therefore $\langle e_i \rangle^{g\psi}=\langle e_i \rangle$ for each $i \in \{1, \ldots, n\}$, and so $g$ is  diagonal.
Let $\mu:=\alpha^{\psi^{-1}}$. As $\langle e_1+e_{i} \rangle ^{g\psi}
= \langle \alpha e_1 + e_{i} \rangle$ for each $i  \in \{2, \ldots,
n\}$, we deduce that $g =\diag (\mu, 1, \ldots, 1) z$
  for some $z \in Z$. Hence $(\det (g z^{-1}))^\psi = \mu^\psi  = \alpha$, a contradiction.
Hence
$\Y \not\in \X^\gp$.

Now, for each $i  \in \{2, \ldots, n\}$, let $h_{i} := \diag(\alpha, 1,
\ldots, 1, \alpha^{-1}, 1, \ldots, 1)$, where the $\alpha^{-1}$
appears in entry  $i$.
  First, for  $j \in \{1, \ldots, n-1\}$, let $k:=j+1$ so that $\x_{j} = \y_{j} = \langle e_{k} \rangle$.   It is easy to verify that $h_{k} + (1-\alpha)E_{k1}$ has determinant $1$ and 
maps $(\X \setminus \x_{j}) $ to $(\Y \setminus \y_{j})$. Finally, for $j \in \{n,\ldots, 2n-2\}$, let $k:=j+2-n$, so that $x_{j} = \langle e_1 + e_{k} \rangle$ and $\y_{j} = \langle \alpha e_1 + e_{k} \rangle$. Then $h_{k}$ has determinant $1$ and
maps $(\X \setminus \x_{j}) $ to $(\Y \setminus \y_{j})$. Therefore, $\X \s{2n-3}{\gp} \Y$, and so $\RC (\gp) \geq 2n-2$.
\end{proof}

\begin{proof}[Proof of Theorem~\ref{thm:A}]
When $|\F|= 2$, this result is clear from
Theorem~\ref{thm:CherlinNonzero}. For the remaining fields $\F$, the fact that Part~\ref{A:PGL} gives an upper bound on $\RC(\PGL_n(\F))$ is proved in Theorem~\ref{thm:relupper}, whilst we prove that it gives a lower bound in Theorem~\ref{thm:lower_bound_n} for $|\F| = 3$ and
Proposition~\ref{prop:PGLlow} for $|\F| \geq 4$. That Part~\ref{A:notPGL} gives upper bounds on $\RC(\ogp)$ is immediate from Theorem~\ref{prop:lodabounds}\ref{proppart:bianca} for $n = 3$, and from Theorem~\ref{thm:PSLupper} for $n \ge 4$. Lemma~\ref{lem:PSL-n=3} and Proposition~\ref{prop:PSLlower} show that these are also lower bounds.
\end{proof}

Recall that $\omega(k)$ denotes the number of distinct prime divisors of the positive integer $k$.

\begin{lemma}[{\cite[Lemma 3.1]{Harper}}] \label{lem:Harper} Let $K \leq \Sym{\Gamma}$ be a finite group with normal subgroup $N$ such that $K/N$ is cyclic. Then $\H(K,\Gamma)\leq \H(N,\Gamma)+\omega(|K/N|)$.
\end{lemma}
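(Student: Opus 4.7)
The plan is to take a maximum independent subset $\Delta = \{\delta_1, \ldots, \delta_h\}$ for $K$ in $\Gamma$, with $h = \H(K, \Gamma)$, and partition its index set into $R := \{i : N_{(\Delta \setminus \{\delta_i\})} \supsetneq N_{(\Delta)}\}$ and its complement $T$. I would first verify that $\Delta_R := \{\delta_i : i \in R\}$ is independent for $N$: for each $i \in R$ fix some $n_i \in N_{(\Delta \setminus \{\delta_i\})} \setminus N_{(\Delta)}$; for any proper $J \subsetneq R$ and any $i_0 \in R \setminus J$, the element $n_{i_0}$ fixes $\{\delta_j : j \in J\} \subseteq \Delta \setminus \{\delta_{i_0}\}$ but not $\delta_{i_0}$, witnessing $N_{(\{\delta_j : j \in J\})} \supsetneq N_{(\Delta_R)}$. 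Hence $|R| \leq \H(N, \Gamma)$.

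The bulk of the argument then lies in proving $|T| \leq \omega(|K/N|)$. For $i \in T$ the image $H_i := K_{(\Delta \setminus \{\delta_i\})}N/N$ must strictly contain $H_\Delta := K_{(\Delta)}N/N$ in the cyclic group $K/N$; otherwise any witness $g \in K_{(\Delta \setminus \{\delta_i\})} \setminus K_{(\Delta)}$ would lie in $K_{(\Delta)}N$ and so decompose as $g = tn$ with $t \in K_{(\Delta)}$ and $n \in N$, forcing $n \in N_{(\Delta \setminus \{\delta_i\})} \setminus N_{(\Delta)}$ and contradicting $i \in T$. Using this, for each $i \in T$ I would pick $g_i \in K_{(\Delta \setminus \{\delta_i\})}$ whose coset $g_i N \cdot H_\Delta$ has prime order $p_i$ in the cyclic group $(K/N)/H_\Delta$, with $p_i$ dividing $|K/N|$.

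The heart of the argument is then to show that the primes $p_i$ attached to distinct $i \in T$ are pairwise different, which gives $|T| \leq \omega(|K/N|)$ and hence $h = |R| + |T| \leq \H(N, \Gamma) + \omega(|K/N|)$. If $p_i = p_j = p$ for distinct $i, j \in T$, then since $(K/N)/H_\Delta$ is cyclic and so has a unique subgroup of order $p$, after replacing $g_j$ by an appropriate power of itself we may assume $g_i g_j^{-1} \in K_{(\Delta)} N$. Writing $g_i g_j^{-1} = tm$ with $t \in K_{(\Delta)}$, $m \in N$ and chasing the action of $m$ on the points of $\Delta$ produces an element $m \in N_{(\Delta \setminus \{\delta_i, \delta_j\})}$ that moves both $\delta_i$ and $\delta_j$, and from this one must derive a contradiction with the $T$-defining equalities $N_{(\Delta \setminus \{\delta_i\})} = N_{(\Delta)} = N_{(\Delta \setminus \{\delta_j\})}$ together with the maximality of $\Delta$.

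The main obstacle is precisely this last distinctness step: the element $m$ produced above is not immediately contradictory. A cleaner way to sidestep the difficulty is to reduce via induction on $\omega(|K/N|)$ to the prime-power case where $K/N$ is cyclic of order $p^a$: there the subgroup lattice of $K/N$ is totally ordered, so the images $H_i$ for $i \in T$ form a chain, and one can force $|T| \leq 1$ directly by comparing any two such $H_i$ in this chain and using the ensuing containment $K_{(\Delta \setminus \{\delta_i\})} \subseteq K_{(\Delta \setminus \{\delta_j\})} N$ to derive a witness in $N$ violating $j \in T$.
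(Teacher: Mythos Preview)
The paper does not actually give a proof of this lemma; it is quoted verbatim from \cite[Lemma~3.1]{Harper} and used as a black box. So there is no ``paper's proof'' to compare with, and the question is simply whether your argument is correct.

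It is not: the key inequality $|T|\le\omega(|K/N|)$ is false in general, so the whole $R$--$T$ decomposition cannot work as stated. Take $K=\Sn{3}$ acting naturally on $\Gamma=\{1,2,3\}$ and $N=\An{3}$, so that $K/N\cong C_2$ and $\omega(|K/N|)=1$. The set $\Delta=\{1,2\}$ is independent for $K$ and realises $\H(K,\Gamma)=2$. Since $N$ acts regularly, $N_{(\{2\})}=N_{(\{1\})}=1=N_{(\Delta)}$, so \emph{both} indices lie in $T$, giving $|T|=2>1=\omega(|K/N|)$. Your bound $|R|\le\H(N,\Gamma)$ is fine (here $|R|=0\le 1=\H(N,\Gamma)$), but the slack in $|R|$ is what compensates for the excess in $|T|$, and your argument never exploits this.

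The proposed rescue via induction on $\omega(|K/N|)$ does not help either: the counterexample above already has $K/N$ of prime order, so you are squarely in the ``prime-power'' base case and still have $|T|=2$. Concretely, the step ``use $K_{(\Delta\setminus\{\delta_i\})}\subseteq K_{(\Delta\setminus\{\delta_j\})}N$ to derive a witness in $N$ violating $j\in T$'' fails: writing $g=g'n$ with $g'\in K_{(\Delta\setminus\{\delta_j\})}$ and $n\in N$ only gives $n\in N_{(\Delta\setminus\{\delta_i,\delta_j\})}$ moving $\delta_i$, and the defining equalities for $i,j\in T$ then force $n$ to move $\delta_j$ as well, rather than producing an element of $N_{(\Delta\setminus\{\delta_j\})}\setminus N_{(\Delta)}$. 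A correct proof must link $R$ and $T$ more tightly (for instance by building an independent set for $N$ that incorporates some of the points indexed by $T$), rather than bounding $|R|$ and $|T|$ separately.
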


\begin{proof}[Proof of Theorem~\ref{thm:B}]
For the upper bound in Part~\ref{B:n=2}, we combine Proposition~\ref{prop:lodabounds}\ref{proppart:scott} with Lemma~\ref{lem:Harper}
to deduce that $\H(\ogp, \Omega_1) = 3 + \omega(e)$, so $\RC(\ogp,
\Omega_1) \le 4 + \omega(e)$.  The lower bound (and the case $\ogp = \PSigL_2(9)$) is Lemma~\ref{lem:PGamL2(q)}.

For the upper bound in Part~\ref{B:ngeq3}, we similarly combine Proposition~\ref{prop:lodabounds}\ref{proppart:bianca} with Lemma~\ref{lem:Harper}. As for the lower bound, first let $n = 3$, and notice that in this case $2n-2 = 4 < n+2 = 5$. If $\ogp$ properly contains $\PGL_3(q)$, then the lower bound of $6$ is proved in Lemma~\ref{lem:PGamLn(q)}. Otherwise, $\PSL_3(q) \ne \PGL_3(q)$, and so the lower bound of $5$ follows from Lemma~\ref{lem:PSL-n=3}.
Now assume that $n \ge 4$. The general lower bound is Lemma~\ref{lem:general_n+2}, the bound of $n+3$ for groups properly containing $\PGL_n(q)$ is Lemma~\ref{lem:PGamLn(q)}, and the bound of $2n-2$ is Proposition~\ref{prop:PSLlower}.
\end{proof}

\section{Action on $m$-spaces for $m \ge 2$}\label{sec:mspace}

In this section, we
consider the
action of $\gp$ on $\Omega_m=\PG_m(V)$, where $\SL_n(\F)
\trianglelefteq \gp \le \GamL_{n}(\F)$, as before, but now $2
\leq m  \le \frac{n}{2}$. The main work is to prove a lower
  bound on $\RC(H, \Omega_m)$, as the upper bound follows from
  existing literature.

\begin{proposition}
\label{prop:lowerm}
  Let $\F$ be arbitrary, let $n \ge 2m \ge 4$, and let $H$
  satisfy $\SL_n(\F) \trianglelefteq H \le \GamL_n(\F)$. Then $\RC(H, \Omega_m) \ge mn-m^2+1$.
\end{proposition}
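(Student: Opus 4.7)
The plan is to exhibit tuples $\X, \Y \in \Omega_m^{m(n-m)+1}$ that are $m(n-m)$-equivalent under $H$ but not $H$-equivalent, thereby giving the required lower bound. Setting $k := m(n-m)$, the construction generalises that of Theorem~\ref{thm:lower_bound_n}: the two $(k+1)$-tuples will agree on their first $k$ entries and differ only in the last, with the values chosen so as to force a dimension mismatch $\dim\langle \X\rangle > \dim\langle \Y\rangle$, ruling out $\Y \in \X^H$ automatically.

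Concretely, I would fix the hyperplane $V_0 := \langle e_1, \ldots, e_{n-1}\rangle$, pick $k$ $m$-subspaces $W_1, \ldots, W_k \subset V_0$ whose joint span equals $V_0$, take $U \in \Omega_m$ with $U \not\subset V_0$ (forcing a nonzero $e_n$-component), and take $U' \in \Omega_m$ with $U' \subset V_0$. Setting $\X := (W_1, \ldots, W_k, U)$ and $\Y := (W_1, \ldots, W_k, U')$, one has $\dim\langle \X\rangle = n$ while $\dim\langle \Y\rangle \le n-1$, so $\Y \notin \X^H$. A natural choice for the $W_i$ is to index them by pairs $(i,j)$ with $i \le m$ and $j \in \{m+1, \ldots, n-1\}$, taking $W_{ij} := \langle e_1, \ldots, \hat e_i, \ldots, e_m, e_i + e_j \rangle$ (yielding $m(n-m-1)$ subspaces), and to supplement these with $m$ further carefully chosen $m$-spaces of $V_0$ (for instance of the form $\langle e_1, \ldots, \hat e_i, \ldots, e_m, \sum_{j>m} e_j\rangle$) so that the total count is $k$.

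To verify $\X \s{k}{H} \Y$, it suffices to produce, for each $\ell \in \{1, \ldots, k+1\}$, an element $h_\ell \in \SL_n(\F) \le H$ with $(\X \setminus X_\ell)^{h_\ell} = (\Y \setminus Y_\ell)$. For $\ell = k+1$ the subtuples coincide, so $h_{k+1} = I_n$ works. For each $\ell \le k$, one requires an $h_\ell$ that stabilises $W_i$ for every $i \ne \ell$ and maps $U$ to $U'$, and I would write each such $h_\ell$ explicitly as a transvection-like element adapted to which $W_\ell$ has been removed, in analogy with $h_\ell = I_n - E_{\ell n}$ in Theorem~\ref{thm:lower_bound_n}.

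The main obstacle is precisely the simultaneous realisability of the $h_\ell$'s: because $U \not\subset V_0$ while $U' \subset V_0$, no $h_\ell$ can preserve $V_0$ as a set, yet $h_\ell$ must fix each of the $m(n-m)-1$ subspaces $W_i \subset V_0$. Balancing these two demands forces a delicate choice of the $W_i$'s, and of $U$ and $U'$, so that for each $\ell$ the stabiliser of $\{W_i : i \ne \ell\}$ in $\SL_n(\F)$ contains an element which ``absorbs'' the $e_n$-direction of $U$ into $V_0$ to produce $U'$. Verifying this step for every $\ell$ simultaneously, by explicit matrix calculation, is the technical heart of the argument.
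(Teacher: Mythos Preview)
Your dimension-mismatch strategy --- forcing $\langle \X\rangle = V$ while $\langle \Y\rangle \subseteq V_0$ --- cannot be made to work once $m \ge 2$; the obstacle you flag is fatal, not merely technical. If for some $\ell$ the remaining subspaces $\{W_i : i \ne \ell\}$ still span the hyperplane $V_0$, then any $h_\ell$ fixing each of them setwise stabilises $V_0$ and hence cannot send $U \not\subseteq V_0$ to $U' \subseteq V_0$. So you would need $\sum_{i \ne \ell} W_i \subsetneq V_0$ for \emph{every} $\ell \le k$. For each such $\ell$ pick a nonzero $f_\ell \in V_0^{*}$ vanishing on every $W_i$ with $i \ne \ell$; then $f_\ell|_{W_\ell} \ne 0$, and evaluating a relation $\sum_\ell c_\ell f_\ell = 0$ on $W_j$ forces $c_j = 0$. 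Thus $f_1,\ldots,f_k$ are linearly independent in $V_0^{*}$, giving $k \le \dim V_0 = n-1$. But $k = m(n-m)$, and one checks that $m(n-m) - (n-1) \ge (m-1)^2 \ge 1$ whenever $m \ge 2$ and $n \ge 2m$. Concretely, for $m=2$, $n=4$: any two distinct $2$-planes in $\F^3$ already span it, so among four $2$-planes at most two can be essential, yet your scheme requires all four to be.

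The paper therefore abandons the dimension mismatch. In its construction both $\X$ and $\Y$ span $V$, and they differ in the last $m+1$ coordinates rather than only the last one. The first $m(n-m-1)$ entries are common $m$-spaces $U_{ij}$ supported on $\{e_1,\ldots,e_{n-1}\}$; the next $m$ entries of $\X$ are $V_i = \langle B_i, e_i + e_n\rangle$ while those of $\Y$ are $W_i = \langle B_i, e_n\rangle$ (with $B_i = \{e_1,\ldots,e_m\}\setminus\{e_i\}$); and the final entries are chosen so that inequivalence is proved not by a dimension count but by showing that suitable intersections of the $U_{ij}$ with the $V_i$ (respectively $W_i$) pin down each $\langle e_i\rangle$ for $i < n$, forcing any conjugating element to be nearly diagonal and yielding a contradiction on the last coordinate. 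The witnesses for $(mn-m^2)$-equivalence are explicit lower-unitriangular matrices in $\SL_n(\F)$.
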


\begin{proof}
  For each $i \in \{1, \ldots,m\}$ and $j \in
\{m+1, \ldots, n-1\}$, let $\BB_i := \{e_1, e_2, \ldots, e_{m}\}
\setminus \{e_i\}$, $U_{ij} :=
\langle B_i, e_j \rangle = \langle e_1, \ldots, e_{i-1}, e_{i+1},
\ldots, e_m, e_j\rangle$,  $V_i := \langle B_i, e_i + e_n
\rangle$, and $W_i := \langle B_i, e_n \rangle$, so that $U_{ij}, V_i, W_i \in \Omega_m$. Define $\X, \Y \in \Omega_m^{mn-m^2+1}$ as follows:
\begin{equation*}
  \begin{aligned}
    & x_{mn - m^2 + 1}  := \langle e_1+e_2, \ldots, e_1+e_m,
    \sum_{i=1}^{n} e_i \rangle;\\
    & y_{mn - m^2 + 1} := \langle
  e_1+e_2, \ldots, e_1+e_m, -e_1+\sum_{i=m+1}^n e_i \rangle;\\
  & \X := \Big( U_{1 (m+1)}, U_{1 (m+2)}, \ldots, U_{m (n-1)}, 
  V_1, V_2, \ldots, V_m, x_{mn-m^2+1}
  \Big); \text{ and}   \\
  &\Y := \Big(
U_{1 (m+1)}, U_{1 (m+2)}, \ldots, U_{m (n-1)},  W_1, W_2, \ldots, W_m, 
y_{mn-m^2+1}\Big).
\end{aligned}
\end{equation*}
We shall first show that $Y \not\in \X^{\GamL_n(\F)}$, so in particular $Y \not\in \X^{\gp}$, and then that $\X \s{mn-m^2}{H} \Y$.
    
    Assume for a contradiction that $\Y \in
  \X^{\GamL_n(\F)}$. Since each subspace in $\Y$ is spanned by 
  vectors of the form $\sum_{i=1}^n \lambda_i e_i$ with $\lambda_i \in
  \{-1,0,1\}$, it follows that there exists $g \in \GL_n(\F)$ with $\X^g =
  \Y$. For each $i \in \{1, \ldots,m\}$, choose $k \in \{1, \ldots, m\} \setminus \{i\}$. Then  
  \[\langle e_i \rangle = \bigcap\limits_{\ell \in \{1, \ldots, m\} \setminus \{i\}}
  U_{\ell(m+1)} \cap 
  V_k   = \bigcap\limits_{\ell \in \{1, \ldots, m\} \setminus \{i\}}
  U_{\ell(m+1)} \cap W_k,\] so  $g$ fixes $\langle e_i \rangle$. 
  Similarly, $g$ fixes $\langle e_j \rangle = \bigcap\limits_{i=1}^{m} U_{ij}$
  for each $j \in \{m+1,\ldots,n-1\}$.

Therefore, there exist $\lambda_1,\ldots,\lambda_{n} \in \F^*$ and
$\mu_1, \ldots, \mu_{n-1} \in \F$ such that  $g$ maps $e_i$ to
$\lambda_i e_i$ for all %
$i \in \{1, \ldots, n-1\}$, and maps $e_n$ to $\lambda_n e_n + \sum_{i =
  1}^{n-1} \mu_i e_i$. 
It now follows that for each $i \in \{2, \ldots, m\}$, the element $g$
maps $e_1+e_i \in \x_{mn-m^2+1}$ to $\lambda_1 e_1 + \lambda_i
e_i$, which must lie in $\y_{mn - m^2+1}$, and hence %
$\lambda_i = \lambda_1$. Similarly,
$V_i^g = W_i$ for each $i \in \{1, \ldots, m\}$, and so $W_i = \langle \BB_i, e_{n} \rangle$ contains $$(e_i+e_{n})^g = \lambda_1 e_i +\lambda_{n}e_{n}+ \sum_{k=1}^{n-1} \mu_k e_k.$$
Hence $\mu_i = - \lambda_1$, and $\mu_j =0$ for all $j \in \{m+1, \ldots, n-1\}$. It now follows that $g$ maps $\sum_{i=1}^{n} e_i \in \x_{mn-m^2+1}$ to $\sum_{i=m+1}^{n} \lambda_i e_i$, which is clearly not in $\y_{mn-m^2+1}$, a contradiction. Thus $\Y \not\in \X^{\gp}$.

\medskip

We now show that $\X \s{mn-m^2}{H} \Y$, by identifying an element $g_\ell \in \SL_n(\F) \leq H$ that
maps $(\X \setminus \x_{\ell})$ to $(\Y \setminus \y_{\ell})$, for each $\ell \in \{1, \ldots, mn-m^2+1\}$. We divide the proof into
three cases, which together account for all values of $\ell$. To simplify our expressions, let $z := e_1 + e_2 + \ldots + e_m$, $\alpha_1:=-1$, and $\alpha_r:=1$ for all $r \in \{2,\ldots,m\}$. In each case the element $g_{\ell}$ will be lower unitriangular, and so will have determinant $1$.

\medskip

\noindent \textbf{Case (a)}: $\ell \in \{1, \ldots, m(n-m-1)\}$. Let $r\in \{1, \ldots, m\}$ and $s\in \{m+1, \ldots, n-1\}$ be such that $\ell =  (n-m-1)(r-1)+(s-m)$, so
that
$\x_\ell = \y_\ell = U_{rs}$. %
Additionally, let $g_\ell$ fix  $e_i$ for all $i \notin \{s,n\}$, map $e_s$ to
$e_s+\alpha_r e_r$, and map $e_n$ to $e_n -  z$. %
Then $g_{\ell}$
fixes $U_{ij}$ provided $(i, j) \neq (r, s)$, and maps $e_i + e_n \in V_i$
to $e_i +e_n - z \in W_i$, and hence $V_i$ to $W_i$, for all $i \in \{1, \ldots m\}$. %
Finally, 
\[\Big(\sum_{i=1}^{n} e_i \Big)^{g_\ell} =  \alpha_r e_r +\sum_{i=m+1}^{n} e_i
  \in  \y_{mn-m^2+1}, \]
where we have used the fact that $e_r +\sum_{i=m+1}^{n} e_i = (e_1+e_r)+(-e_1+ \sum_{i=m+1}^{n} e_i)$ when $r > 1$. Hence $g_{\ell}$ maps $x_{mn-m^2 + 1}$ to $y_{mn-m^2 + 1}$, as required. 

\medskip

\noindent \textbf{Case (b)}: $\ell = {m(n-m-1)+r}$, where $r \in \{1, \ldots, m\}$. Here, $\x_\ell =
V_r$ and $\y_\ell = W_r$. Let $g_\ell$ fix $e_i$ for each $i \in \{1,
\ldots, n-1\}$ and map $e_n$ to $\alpha_r e_r + e_n - z$. 
Then $g_\ell$ fixes $U_{ij}$ for all $i$ and $j$, %
and maps $e_i + e_n \in V_i$ to $e_i + \alpha_r e_r + e_n - z \in W_i$,
and hence $V_i$ to $W_i$, for all $i \in \{1, \ldots, m\} \setminus \{r\}$. Finally,
$$\Big( \sum_{i=1}^{n} e_i \Big)^{g_\ell} = \alpha_r e_r+\sum_{i=m+1}^n e_i  \in \y_{mn-m^2+1},$$
as in Case (a).

\medskip 

\noindent \textbf{Case (c)}: $\ell={mn-m^2+1}$. Let %
$g_\ell$ fix $e_i$ for each $i \in \{1, \ldots, n-1\}$, and map $e_n$ to $e_n - z$. Then $g$ fixes $U_{ij}$ for all $i$ and $j$, and maps $e_i + e_n \in V_i$ to $e_i + e_n - z \in W_i$
for all $i$, as required. %
\end{proof}

The \emph{irredundant base size} $\I(K, \Gamma)$ of a group $K$ acting faithfully on a set $\Gamma$ is
  the largest size of a tuple $(\alpha_1,\ldots,\alpha_k)$ 
  of elements of $\Gamma$ such that $K > K_{\alpha_1} > K_{(\alpha_1,
      \alpha_2)} > \cdots > K_{(\alpha_1, \ldots, \alpha_k)} = 1$,
  with all inclusions strict. It is clear that $\I(K, \Gamma)$
  is bounded below by the height $\H(K,\Gamma)$, which we recall (from
  \S\ref{sec:intro}) is bounded below by $\RC(K,\Gamma)-1$.
  
\begin{proof}[Proof of Theorem~\ref{thm:Bm}] In \cite[Thm 3.1]{KRD}, it is proved that $\I(\PGL_{n}(\F), \Omega_m) \le (m+1)n - 2m +1$. Since the irredundant base size of a subgroup is at most the irredundant base size of an overgroup, and the height is at most the irredundant base size, we deduce that $\H(\ogp, \Omega_m) \le (m+1)n-2m+1$ for all $\ogp \le \PGL_n(\F)$. From Lemma~\ref{lem:Harper}, we then see that for all $\ogp$ as in the statement, $\H(\ogp, \Omega_m) \le (m+1)n - 2m + 1 + \omega(e)$, and hence the upper bound follows. The lower bound is immediate from Proposition~\ref{prop:lowerm}, so the proof is complete.
\end{proof}

{\small \noindent \textbf{Acknowledgments}
We thank the anonymous referee for their careful reading and helpful
comments.
The authors would like to thank the Isaac Newton Institute for
Mathematical Sciences for support and hospitality during the programme
``Groups, representations and applications: new perspectives'', when work
on this paper was undertaken.  This work was supported by EPSRC grant
no. EP/R014604/1, and also partially supported by a grant from the
Simons Foundation. The first author was supported by the University of
St Andrews (St Leonard's International Doctoral Fees Scholarship \&
School of Mathematics and Statistics PhD Funding Scholarship), and by
EPSRC grant no. EP/W522422/1. The second author is funded by the
Heilbronn Institute. \newline In order to meet institutional and
research funder open access requirements, any accepted manuscript
arising shall be open access under a Creative Commons Attribution (CC
BY) reuse licence with zero embargo.}

\smallskip

{\small \noindent \textbf{Competing interests}
The authors declare none.}
  
\bibliographystyle{plain}
\bibliography{RCrefs}

\end{document}